\let\expandafter\oldproof\csname\string\proof\endcsname
\let\oldendproof\endproof
\renewenvironment{proof}[1][\proofname]{%
	\oldproof[\bf #1]%
}{\oldendproof}
\theoremstyle{plain}
\newtheorem{theorem}{Theorem}
\newtheorem{lemma}{Lemma}[section]
\newtheorem{conjecture}[theorem]{Conjecture}
\title{Polynomial removal lemmas for ordered graphs}
\author{Lior Gishboliner\thanks{ETH Zurich, \emph{e-mail}: \textbf{\{lior.gishboliner,istvan.tomon\}@math.ethz.ch}}
	\and
	Istv\'an Tomon\footnotemark[1] }
\date{}
\newcommand{\core}{\text{core}}
\newcommand{\poly}{\text{poly}}
\begin{document}
\sloppy 
\maketitle

\begin{abstract}
    A recent result of Alon, Ben-Eliezer and Fischer establishes an induced removal lemma for ordered graphs. That is, if $F$ is an ordered graph and $\varepsilon>0$, then there exists $\delta_{F}(\varepsilon)>0$ such that every $n$-vertex ordered graph $G$ containing at most $\delta_{F}(\varepsilon) n^{v(F)}$ induced copies of $F$ can be made induced $F$-free by adding/deleting at most $\varepsilon n^2$ edges.  We prove that $\delta_{F}(\varepsilon)$ can be chosen to be a polynomial function of $\varepsilon$ if and only if $|V(F)|=2$, or $F$ is the ordered graph with vertices $x<y<z$ and edges $\{x,y\},\{x,z\}$ (up to complementation and reversing the vertex order). We also discuss similar problems in the non-induced case.
\end{abstract}

\section{Introduction}

Graph removal lemmas are among the most powerful tools in combinatorics, with further applications in number theory, logic, and property testing. The celebrated graph removal lemma, which originates in the work of Ruzsa and Szemer\'edi \cite{RuzsaSz}, states that if $F$ is a graph and $\varepsilon>0$, then there exists $\delta=\delta_{F}(\varepsilon)>0$ such that every $n$-vertex graph $G$ containing at most $\delta n^{v(F)}$ copies of $F$ can be made $F$-free by deleting at most $\varepsilon n^2$ edges. Alon, Fischer, Krivelevich and Szegedy \cite{AFKS} established an analogue of this for induced subgraphs. This result, known as the induced removal lemma, states that if $G$ contains at most $\delta_F(\varepsilon) n^{v(F)}$ induced copies of a graph $F$, then $G$ can be made induced $F$-free by adding/deleting at most $\varepsilon n^2$ edges. A generalization to arbitrary hereditary graph properties was later obtained by Alon and Shapira \cite{AS_hereditary}. For a general survey on graph removal lemmas, we refer the reader to \cite{CF}. 

In this paper, we are interested in ordered variants of the graph removal lemma. An \emph{ordered graph} is a graph with a linear ordering $\leq$  on its vertex set. An ordered graph $H$ is a subgraph of an ordered graph $G$ if there exists an order preserving embedding from $V(H)$ to $V(G)$ which maps edges into edges, and an induced subgraph if it also maps non-edges into non-edges. The natural analogue of the induced (and also non-induced) removal lemma for ordered graphs was established by Alon, Ben-Eliezer and Fischer \cite{ABF17}; see also \cite{AB20} for related results.

	All of the above results build on the regularity lemma of Szemer\'edi \cite{Szemeredi} or its appropriate generalizations. Consequently, the lower bounds on $\delta_{F}(\varepsilon)$ supplied by these proofs are quite poor. Even in the case of the original graph removal lemma, the current best known bound is $1/\delta \leq \text{tower}(O(\log 1/\varepsilon))$, as proved by Fox in \cite{Fox}. Here, $\text{tower}(x)$ denotes a tower of $x$ exponents. On the other hand, in some special cases better bounds are known. This motivated the natural question of characterizing the cases in which the removal lemma has polynomial bounds, namely, when $1/\delta$ can be taken as a polynomial function of $1/\varepsilon$. By now there are several results of this type. In the case of graphs, Alon \cite{Alon} showed that the $F$-removal lemma has polynomial bounds if and only if $F$ is bipartite. For the case of induced subgraphs, a result of Alon and Shapira \cite{AS_induced} tells us that the induced $F$-removal lemma does not have polynomial bounds, unless $|V(F)|=2$, or $F\in \{P_3,\overline{P_3},P_4,C_{4},\overline{C_4}\}$, where $P_{k},C_{k}$ are the path and cycle on $k$ vertices, respectively. A polynomial bound in the case $F=P_3$ is easy to show. Alon and Fox \cite{AF} proved that the induced-$P_4$-removal lemma has polynomial bounds as well. The case $F=C_4$ is still open, see \cite{GS_C4,GS_poly} for the currently best known bounds. The results of \cite{AS_hypergraphs} and \cite{GT} completely characterize the $k$-uniform hypergraphs which admit polynomial induced removal lemmas, for $k\geq 3$.

\subsection{Polynomial induced removal lemma for ordered graphs}

  In the extended version of \cite{ABF17}, Alon, Ben-Eliezer and Fischer proposed the problem of finding ordered graph properties with  polynomial induced removal lemmas. Addressing this question, we give a complete characterization of ordered graphs $F$ for which the induced-$F$-removal lemma has polynomial bounds. It turns out that there is essentially only one such nontrivial ordered graph. For an ordered graph $G$, we denote by $\overline{G}$ the complement of $G$, and by $G^{\leftarrow}$ the ordered graph obtained by reversing the vertex order. It is easy to see that the (induced/non-induced) removal lemma for $F$ is equivalent to the (induced/non-induced) removal lemma for $F^{\leftarrow}$. In the induced case, there is also symmetry with respect to complementation: the induced removal lemma for $F$ is equivalent to the induced removal lemma for $\overline{F}$. In the rest of the paper we will denote by $D$ the ordered graph with vertices $x < y < z$ and edges $\{x,y\},\{x,z\}$. 

\begin{theorem}\label{thm:induced}
	For an ordered graph $F$, the induced $F$-removal lemma has polynomial bounds if and only if $|V(F)|=2$, or  $F \in \left\{D,D^{\leftarrow},\overline{D},\overline{D^{\leftarrow}}\right\}$. 
\end{theorem}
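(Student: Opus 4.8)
The plan is to prove the two directions of the equivalence separately.

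For sufficiency, the case $|V(F)|=2$ is immediate: up to the two symmetries $F$ is a single edge, so the only induced-$F$-free ordered graph on a fixed vertex set is the empty one (or, if $F$ is an edgeless pair, the complete one), and an $n$-vertex $G$ with at most $\delta n^{2}$ induced copies of $F$ is already within $\delta n^{2}$ pairs of it; thus $\delta_{F}(\varepsilon)=\varepsilon$ works. The substantial case is $F=D$; the cases $D^{\leftarrow},\overline D,\overline{D^{\leftarrow}}$ follow because polynomial induced removal is preserved under reversing the vertex order and under complementation (as recalled above). The structural fact I would rely on is that an ordered graph $G$ is induced-$D$-free if and only if, for every vertex $v$, the right-neighbourhood $N^{+}(v)=\{u>v:uv\in E(G)\}$ spans a clique; in particular the number of induced copies of $D$ in $G$ equals $\sum_{v}e\bigl(\overline{G[N^{+}(v)]}\bigr)$. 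Given $G$ with at most $\delta n^{3}$ such copies, I would turn it into an induced-$D$-free graph in two rounds: first add every non-edge lying in at least $\sqrt{\delta}\,n$ induced copies of $D$ (there are at most $\sqrt{\delta}\,n^{2}$ of these, by the displayed identity), and then clean up the few remaining copies by deleting the surviving edges that still lie in a copy of $D$; after the first round every right-neighbourhood is already close enough to a clique that the clean-up changes only $\mathrm{poly}(\delta)\,n^{2}$ further pairs. The delicate point — and the main obstacle on this side — is to organize the two rounds so that the edits do not reintroduce too many copies of $D$ through the interaction between the modified pairs and the nested right-neighbourhoods, keeping the total number of changes polynomial in $\delta$.

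For necessity, I would show that every ordered $F$ with $|V(F)|\ge 3$ that is not one of $D,D^{\leftarrow},\overline D,\overline{D^{\leftarrow}}$ has no polynomial induced removal lemma, using three ingredients. The first is a finite family $\mathcal O$ of obstruction graphs, each on at most four vertices, such that every such $F$ contains a member of $\mathcal O$ as an induced subgraph. Since every graph on at least six vertices contains an induced triangle or an induced independent set of size three, $\mathcal O$ must contain $K_{3}$ and $\overline{K_{3}}$ (the order is irrelevant for these), which handles all $F$ with $|V(F)|\ge 6$. For $|V(F)|=5$ the only remaining underlying graph is $C_{5}$, and a short parity argument — five cyclically arranged distinct numbers cannot all be cyclic local extrema, since cyclic local maxima and minima must alternate — shows that every ordering of $C_{5}$ contains the ``middle-centred cherry'' $M$ on $x<y<z$ with edges $xy,yz$. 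For $|V(F)|\in\{3,4\}$ one checks by hand which orderings of $P_{3},\overline{P_{3}},P_{4},C_{4},\overline{C_{4}}$ avoid $K_{3},\overline{K_{3}},M$ and $\overline M$ as induced subgraphs, and places the finitely many survivors — certain ``zig-zag''-ordered copies of $P_{4}$ and $C_{4}$ together with their complements — into $\mathcal O$. The second ingredient is a substitution lemma: if $O\in\mathcal O$ has no polynomial induced removal lemma and $O$ embeds into $F$ as an induced subgraph, then neither does $F$. This follows by the standard device of taking a lower-bound example for $O$ and attaching, at the positions of the order prescribed by the embedding, clique or independent-set blocks realizing the remaining vertices of $F$, so that induced copies of $F$ in the new graph project onto induced copies of $O$ while its distance to induced-$F$-freeness changes only by a constant factor.

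The third ingredient is a super-polynomial lower bound for each member of $\mathcal O$. For $K_{3}$ and $\overline{K_{3}}$ the induced-removal problem literally coincides with ordinary triangle removal, so the Behrend / Ruzsa--Szemer\'edi construction applies. For $M$ (and for $\overline M$ by complementation) I would take the tripartite Behrend triangle graph $H$ with parts $P_{1},P_{2},P_{3}$ of size $N$, in which the $N^{2-o(1)}$ triangles are pairwise edge-disjoint, order the parts $P_{1}<P_{2}<P_{3}$, and let $G$ be obtained from $H$ by keeping the edges between $P_{1},P_{2}$ and between $P_{2},P_{3}$ and complementing the bipartite graph between $P_{1}$ and $P_{3}$; one checks that the induced copies of $M$ in $G$ are precisely the triangles of $H$. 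Thus $G$ has only $N^{2-o(1)}$ induced copies of $M$ but, since no two of these triangles share a pair of vertices, it takes $N^{2-o(1)}$ edits to destroy them, so $G$ is $N^{-o(1)}$-far from induced-$M$-free while the copy density is only $N^{-1-o(1)}$, which forces $1/\delta$ to grow faster than any power of $1/\varepsilon$. The remaining, four-vertex obstructions require analogous but ad hoc Behrend-type constructions tailored to each forbidden four-vertex pattern. Producing these constructions, and verifying that the obstruction family $\mathcal O$ is finite and complete, is where the bulk of the work on the negative side lies.
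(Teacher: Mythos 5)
Both directions of your proposal contain genuine gaps, and in both cases the gap is exactly where the paper's work lies. On the positive side, your structural observation (an ordered graph is induced $D$-free iff every forward neighbourhood spans a clique, so the number of induced copies of $D$ is $\sum_v \bar e(G[N^+(v)])$) is correct and is also the paper's starting point (Lemmas \ref{lem:extract_clique} and \ref{lem:charac_simple}). But the two-round procedure ``add every non-edge lying in $\geq \sqrt{\delta}\,n$ copies, then delete the surviving edges in copies'' does not work: adding a pair $\{u,v\}$ creates new induced copies of $D$ in which $\{u,v\}$ plays the role of an \emph{edge} (all $u<w<v$ or $u<v<w$ with $\{u,w\}\in E$, $\{v,w\}\notin E$), and after the first round $\Theta(n^2)$ edges can each lie in some copy of $D$, so the clean-up can cost $\Theta(n^2)$ edits independently of $\delta$. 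You name this interaction as ``the main obstacle'' but offer no mechanism to control it; that mechanism is the entire content of Theorem \ref{thm:P3}, which proceeds instead by extracting a bounded number of almost-cliques plus a sparse remainder (Lemma \ref{lem:partition}), exploiting the prefix/suffix structure of induced $D$-free unions of two cliques (Lemmas \ref{lem:charac_simple}--\ref{lem:charac_general}), an interval decomposition driven by an ordered-sequence removal lemma (Lemmas \ref{lem:sequence_removal} and \ref{lem:interval_partition}), and a six-step editing scheme with explicit error accounting. None of this is recoverable from the sketch.

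On the negative side, the reduction to a finite obstruction family hinges on your ``substitution lemma'' (if a bad $O$ embeds into $F$ as an induced ordered subgraph, then $F$ is bad), which is asserted but not proved, and whose proposed proof fails. The $\varepsilon n^2$ pair-disjoint copies of $O$ in the lower-bound example occupy $\varepsilon n^2$ different tuples of positions, so a single choice of bipartite graphs between the attached blocks and the host cannot simultaneously extend all of them to induced copies of $F$; conversely, induced copies of $F$ in the augmented graph need not project onto induced copies of $O$ at all (they may use several vertices of one block, or avoid the planted copies entirely), so neither the farness nor the copy-count bound is inherited. The paper never uses substitution: for each bad $F$ it runs a Behrend-type construction on all of $V(F)$ simultaneously (Lemma \ref{lem:RS_cycle_general}), planting whole copies $F_{x,s}$ along arithmetic progressions and flipping the adjacency of uncovered gray pairs, with a $(P,\mathcal{A})$-goodness condition forcing every induced copy of $F$ to close a gray cycle inside a single planted copy; goodness is then verified case by case for graphs containing a triangle or independent triple (which covers $v(F)\geq 6$), monotone paths, ordered $5$-cycles via cores, and the seven remaining ordered $4$-vertex graphs. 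Your construction for the middle-centred cherry is correct and coincides with the paper's treatment of $P_3^{\text{mon}}$, but the triangle case, all the $4$-vertex cases, and the lifting to general $F$ remain unproved in your outline.
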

 
 A graph is \emph{chordal} if it contains no induced cycle of length at least 4. It is well known that a graph is chordal if and only if it has a vertex ordering such that the resulting ordered graph is induced $D$-free (such an ordering is called a perfect elimination order), see e.g. \cite{FG}. It was recently proved by de Joannis de Verclos \cite{J} that the property of being chordal also admits a polynomial removal lemma. Despite the similarity with Theorem~\ref{thm:induced}, it is unclear whether there are any implications between these two results, as the ordering imposes additional structure.

\subsection{Polynomial (non-induced) removal lemma for ordered graphs}
We also consider the non-induced variant of the previous theorem. In the case of \emph{directed graphs} (\emph{digraphs}), Alon and Shapira  \cite{AS_directed} gave a complete characterization of digraphs $F$ such that the $F$-removal lemma has polynomial bounds. This result can be stated as follows. A \emph{homomorphism} from a digraph $G_1$ to a digraph $G_2$ is a function $\varphi:V(G_1)\rightarrow V(G_2)$ that satisfies $(u,v)\in E(G_1) \Rightarrow (\varphi(u),\varphi(v))\in E(G_2)$. The \emph{core} of a digraph $G$ is the smallest subgraph $K$ for which there is a homomorphism from $G$ to $K$. Then, for a connected digraph $F$, the $F$-removal lemma has polynomial bounds if and only if the core of $F$ is an oriented tree or a directed cycle of length 2.

In a highly parallel manner, we propose a conjecture characterizing ordered graphs which admit a polynomial removal lemma, and prove its ``only if'' part. A \emph{homomorphism} between ordered graphs is a graph homomorphism which also preserves the vertex orderings. Formally, for two ordered graphs $G_1,G_2$, a map $\varphi:V(G_1) \rightarrow V(G_2)$ is a homomorphism if $\{\varphi(x),\varphi(y)\} \in E(G_2)$ for every $\{x,y\} \in E(G_1)$, and $\varphi(x) \leq \varphi(y)$ for every  $x,y \in V(G_1)$ satisfying $x\leq y$. 
Observe that for such $\varphi$, the preimage of each vertex of $G_2$ is an interval in $G_1$ which spans an independent set. 
For an ordered graph $G$, the {\em core} of $G$, denoted by $\core(G)$, is the smallest subgraph of $G$ (in terms of number of vertices) to which there is a homomorphism from $G$. In the preliminaries, we will show that the core is well defined, that is, the smallest such subgraph is unique up to isomorphism. We prove that if $\core(F)$ is not a forest, then $F$ has no polynomial removal lemma. 

\begin{theorem}\label{thm:noninduced}
	Let $F$ be an ordered graph such that $\core(F)$ is not a forest, let $\varepsilon > 0$ be sufficiently small, and let $n \geq n_0(\varepsilon)$. Then there exists an ordered graph $G$ on $n$ vertices such that $G$ contains at most $\varepsilon^{\Omega(\log 1/\varepsilon)}n^{v(F)}$ copies of $F$, but one has to remove at least $\varepsilon n^{2}$ edges to destroy all copies of $F$ in $G$.
\end{theorem}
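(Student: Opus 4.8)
The plan is to take for $G$ a blow-up of a Behrend-type construction performed on $C:=\core(F)$, transferring to $F$ both the many edge-disjoint copies that force the distance lower bound and the scarcity of copies, and finally rescaling so that the distance parameter equals a prescribed $\varepsilon$. We may assume $F$ is connected. Fix a surjective homomorphism $\phi\colon F\to C$; as observed above, its fibres $W_v=\phi^{-1}(v)$ ($v\in V(C)$) are intervals of $V(F)$ spanning independent sets and occur in the order of $V(C)$, so $F$ is an ordered subgraph of the ordered blow-up $C[\vec w]$, where $v\in V(C)$ is replaced by $w_v:=|W_v|$ consecutive clones and each edge by a complete bipartite graph. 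We first build an auxiliary ordered graph $G_C$ on $v(C)m$ vertices: each $v\in V(C)$ becomes a block $U_v\cong[m]$ of consecutive vertices, non-edges of $C$ give non-adjacent block pairs, and for $uv\in E(C)$ with $u$ before $v$ the bipartite graph between $U_u$ and $U_v$ is $\{(a,b):b-a\in S_{uv}\}$ for a set $S_{uv}\subseteq[m]$ to be chosen. The final $G$ will be a constant-factor blow-up of $G_C[\vec w]$.

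To choose the difference sets, orient each edge of $C$ from its lower to its higher endpoint and let $\ell_{uv}\ge1$ be the difference of the ranks of $u$ and $v$; set $S_{uv}:=\ell_{uv}\cdot B$ for a common set $B$. For each $\alpha_1\in[m]$ and $c\in B$, taking $\alpha(v):=\alpha_1+(j-1)c$ when $v$ is the $j$th vertex of $C$ gives a copy of $C$ in $G_C$ (a \emph{diagonal copy of slope $c$}); these $m|B|$ copies are pairwise edge-disjoint, since an edge of $G_C$ in a block pair $(U_u,U_v)$ determines $c$ and then $\alpha_1$, so $G_C$ is $\varepsilon_0$-far from $C$-free with $\varepsilon_0=m|B|/(v(C)m)^2$. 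Conversely, a copy of $C$ with all vertices in distinct blocks (a \emph{canonical} copy) has $\alpha(v)-\alpha(u)=\ell_{uv}s_{uv}$, $s_{uv}\in B$; going around any cycle $z$ of $C$ yields the translation-invariant equation $\sum_{e\in z}\pm\ell_e s_e=0$, whose coefficients sum to $0$. We take $B$ to be a generalized Behrend set avoiding all nontrivial solutions of the finitely many such equations coming from a cycle basis of $C$; their coefficients are bounded by $v(C)$, and --- this is the crux, see below --- since $C$ is a non-forest core one can arrange each equation to have genus $1$, so the multi-dimensional Behrend construction gives such a $B$ with $|B|\ge m\,e^{-O(\sqrt{\log m})}=m^{1-o(1)}$. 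Hence every canonical copy is diagonal, $G_C$ has only $m|B|^{v(C)-2}\le m^{v(C)-1-o(1)}$ copies of $C$ (an extra $B$-parameter per bridge of $C$), and in fact $\hom(C',G_C)\le m^{v(F)-\Omega(1)}$ for every ordered quotient $C'$ of $F$ (using the above rigidity when $C'$ still has a cycle, and $v(C')<v(F)$ otherwise).

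Now transfer to $F$ and rescale. In $G_C[\vec w]$ the blow-up of each diagonal copy of $C$ contains a copy of $F$ (as $F\subseteq C[\vec w]$), and distinct diagonal copies --- edge-disjoint in $G_C$ --- give edge-disjoint copies of $F$; so $G_C[\vec w]$ (on $v(F)m$ vertices) has $m|B|=m^{2-o(1)}$ edge-disjoint copies of $F$. For the upper bound, forgetting the clone coordinate sends a copy of $F$ in $G_C[\vec w]$ to an edge-preserving, weakly order-preserving map $V(F)\to V(G_C)$ whose image is a homomorphic copy in $G_C$ of some ordered quotient $C'$ of $F$; summing over the $O_F(1)$ fibre patterns and clone choices, the bounds on $\hom(C',G_C)$ give $O_F(m^{v(F)-\Omega(1)})$ copies of $F$ in $G_C[\vec w]$. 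Given small $\varepsilon>0$, choose $m=m(\varepsilon)$ largest with $m|B|/(v(F)m)^2\ge\varepsilon$, so that $m=e^{\Theta((\log 1/\varepsilon)^2)}$, and let $G$ be the $t$-fold blow-up of $G_C[\vec w]$ with $t=\lfloor n/(v(F)m)\rfloor$. Then $G$ is still $\varepsilon$-far from $F$-free (each of the $m^{2-o(1)}$ edge-disjoint copies forces $t^2$ edge removals in the blow-up, and these demands are disjoint), while $G$ has at most $O_F(m^{v(F)-\Omega(1)})\,t^{v(F)}=n^{v(F)}m^{-\Omega(1)}=n^{v(F)}\varepsilon^{\Omega(\log1/\varepsilon)}$ copies of $F$, as required (with $n_0(\varepsilon)=v(F)m(\varepsilon)$, and minor adjustments for divisibility and constants).

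The main obstacle is the combinatorial input asserted above: for a non-forest core $C$, the cycle-space equations can be made of genus $1$. Equivalently, an ordered cycle for which this fails --- such as the ``alternating'' ordered $4$-cycle on $v_1<v_2<v_3<v_4$ with cyclic order $v_1,v_3,v_2,v_4$ --- cannot occur in a non-forest core, since every such cycle retracts onto an ordered forest, which forces the core to do so too (contradicting $\core(F)$ not being a forest). This is precisely where the ordered setting is essential: for unordered graphs the analogue is vacuous, while here the order is what both creates the balanced, high-genus cycles that the basic construction cannot handle and, being restrictive enough, confines such cycles to graphs whose core is a forest. A secondary technical point is controlling the copies of $F$ produced by non-canonical block and clone assignments, which is why one needs the quotient form of the bound on $\hom(\cdot,G_C)$ rather than just a bound on the number of copies of $C$.
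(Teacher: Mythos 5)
Your overall architecture --- a Behrend-type difference construction on $\core(F)$, transferred to $F$ via the blow-up determined by the fibres of a homomorphism $F\to\core(F)$, then rescaled --- is the same as the paper's. But the step you yourself flag as the crux is both unjustified and, as stated, false. You claim that in a non-forest core $C$ every cycle equation $\sum_e \pm\ell_e s_e=0$ (with $\ell_e$ the rank differences) can be arranged to have genus $1$, arguing that any ``bad'' ordered cycle retracts onto an ordered forest and hence would force $\core(C)$ to be a forest. That inference is invalid: a subgraph of $C$ having a forest core says nothing about $\core(C)$ (every graph contains single edges, which are forests). And the claim itself fails: take $C=K_4$ with the natural order, which is a non-forest core; the ordered cycle $1\to2\to4\to3\to1$ gives the equation $s_{12}+2s_{24}-s_{34}-2s_{13}=0$, whose coefficient multiset splits into the zero-sum parts $\{1,-1\}$ and $\{2,-2\}$, i.e.\ it has genus $2$, and every $B\subseteq[m]$ with $|B|\ge 2$ has nontrivial solutions $s_{12}=s_{34}$, $s_{24}=s_{13}$ --- about $|B|^2$ of them --- so no dense $B$ avoids them. (Your own ``alternating'' example $1,3,2,4,1$ in fact yields $2s_{13}-s_{23}+2s_{24}-3s_{14}=0$, which has genus $1$, so the correspondence you posit between high genus and forest cores fails in both directions.) One could still hope to choose a genus-$1$ cycle \emph{basis} and argue that its rigidity propagates, but that requires a real proof which you do not supply. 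The paper sidesteps the issue entirely: it controls only a \emph{single} cycle $i_1,\dots,i_t$ of the core, and gains the needed freedom by choosing a permutation $\sigma$ with $\sigma(i_1)<\dots<\sigma(i_t)$ and placing the diagonal points at $x+(\sigma(i)-1)s$ in a block of size $\sigma(i)m$; the chosen cycle then yields $\sum_j(\sigma(i_{j+1})-\sigma(i_j))s_j=(\sigma(i_t)-\sigma(i_1))s_t$ with all coefficients positive, exactly the form covered by Lemma \ref{lem:Behrend}. One cycle suffices because of the core property: $G$ is homomorphic to $K=\core(F)$ and every homomorphism $K\to K$ is the identity, so every copy of $K$ in $G$ is canonical, and tracing that one cycle already pins three of its vertices to one of only $m|S|$ diagonals, which is all the counting needs (see Lemmas \ref{lem:RS_cycle_general} and \ref{lem:RS_core}).

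A second, related gap is your bound on $\hom(C',G_C)$ for ordered quotients $C'$ of $F$ that still contain a cycle, which you justify by ``the above rigidity''. The rigidity you set up concerns cycles of $C$ itself; a cycle of a quotient $C'$ projects under the block map to a closed \emph{walk} in $C$, which may traverse an edge of $C$ twice in opposite directions, producing equations with cancelling coefficient pairs --- again genus $\ge 2$ and unavoidable by any dense $B$. The clean route, which is the paper's, is not to classify quotients at all: since $K\subseteq F$, every copy of $F$ in $G$ contains a copy of $K$, all such copies are canonical and hence few, and one multiplies by $n^{v(F)-v(K)}$ for the remaining vertices. If you repair the crux as above and switch to this counting, your argument becomes essentially the paper's proof.
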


Unfortunately, we were unable to prove that the converse also holds in general, and leave it as an interesting open problem.

\begin{conjecture}\label{conj:noninduced}
	For an ordered graph $F$, if $\core(F)$ is a forest, then the $F$-removal lemma has polynomial bounds.
\end{conjecture}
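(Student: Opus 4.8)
The plan is to prove the contrapositive. Fix $F$ with $T:=\core(F)$ a forest, a small $\varepsilon>0$, and an $n$-vertex ordered graph $G$ that is $\varepsilon$-far from $F$-free (more than $\varepsilon n^{2}$ edges must be deleted to destroy all copies of $F$); the goal is that $G$ contains at least $\poly(\varepsilon)\,n^{v(F)}$ copies of $F$. The argument would split into three parts: (a) a greedy step turning $\varepsilon$-farness into a large family of \emph{edge-disjoint} copies of $F$; (b) a reduction from a general such $F$ to the case where $F$ itself is an ordered forest, using that $F$ is an ordered blow-up of its core; and (c) the heart, a supersaturation statement for ordered forests, proved by induction on the tree structure. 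The key conceptual point is that ``forest'' plays here the role that ``bipartite'' plays in Alon's theorem for unordered graphs: it should be exactly the class admitting a robust supersaturation. In contrast to the unordered case, however, the supersaturation must be obtained \emph{without} a polynomial bound on the ordered extremal number (one can have $\ex_{<}(n,T)=\Theta(n^{2})$ already for an ordered tree $T$, e.g.\ for the increasing path on three vertices, realized by a ``half-graph''), so edge count alone is useless and one must propagate finer ``positional degree'' information through the tree.

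Part (a) is routine: while $G$ contains a copy of $F$, pick one and delete its edges; $\varepsilon$-farness forces more than $\varepsilon n^{2}/e(F)$ rounds, and the chosen copies are pairwise edge-disjoint since a later copy cannot reuse a deleted edge. For part (b), let $h\colon F\to T$ be a homomorphism onto $\core(F)=T$; as observed in the preliminaries each fibre $I_{t}=h^{-1}(t)$ is a consecutive interval spanning an independent set, and --- since $T$ is a core one may assume $h$ fixes $V(T)$ pointwise, so $t\in I_{t}$ --- the fibres occur in the order of $V(T)$. Thus $F$ is $T$ with each vertex $t$ replaced by the block $I_{t}$ and a prescribed subgraph of the complete bipartite graph inserted between $I_{s}$ and $I_{t}$ for each $st\in E(T)$. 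Since the blocks are independent, a copy of $T$ in $G$ whose relevant ``windows'' carry dense bipartite patterns contains many copies of $F$ by a K\H{o}v\'ari--S\'os--Tur\'an / dependent-random-choice count inside each window; so it suffices to strengthen part (c) to produce $\poly(\varepsilon)\,n^{v(T)}$ such robust copies of $T$ and multiply through.

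For part (c), let $T$ be an ordered tree (a disjoint union of trees reduces to this). Order-preservingly embedding $T$ into $G$ means choosing $x_{1}<\dots<x_{k}$ in $G$ with $x_{i}x_{j}\in E(G)$ whenever $v_{i}v_{j}\in E(T)$; one counts these by rooting $T$ and building the image outward from the root, so that when a child $w$ of an already-placed vertex $u$ is added, its image must be a $G$-neighbour of the image of $u$ lying in a prescribed slot relative to the images placed so far (to the left of all of them, to the right, or inside a specified gap). Each $G$-vertex thus carries a family of \emph{positional degrees}. The base case is already instructive: for $T$ the increasing path on $v_{1}<v_{2}<v_{3}$ (edges $v_{1}v_{2}$, $v_{2}v_{3}$), rooted at $v_{2}$, the count of copies is $\sum_{x}\ell(x)r(x)$ with $\ell,r$ the left/right degrees; $\varepsilon$-farness gives $\sum_{x}\min(\ell(x),r(x))\ge\varepsilon n^{2}$ (moving each vertex to its cheaper side repairs $G$ at that cost), and the power-mean inequality over the $n$ terms yields $\sum_{x}\ell(x)r(x)\ge\sum_{x}\min(\ell(x),r(x))^{2}\ge\varepsilon^{2}n^{3}$. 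For a general rooted tree the count becomes a nested sum of products of positional degrees with $n$ terms at each level, and the plan is to feed the edge-disjoint family from (a) in at the leaves and propagate, level by level, a statement of the form ``for a $\poly(\varepsilon)$-fraction of $G$-vertices the positional degree into the next level is $\ge\poly(\varepsilon)\,n$'', using convexity at each contraction of a level so that only a polynomial factor is lost.

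The main obstacle is this propagation for a genuinely branching tree. The difficulty is that, unlike for the increasing $P_{3}$ --- whose extremal examples are half-graph-like and whose repair decomposes vertex by vertex --- the family of $T$-free ordered graphs has no simple description for a general ordered tree $T$, and the minimum number of edge deletions making $G$ into a $T$-free graph is a global optimization that does not obviously split into finitely many positional-degree deficits. Concretely, even the naive extraction already fails for $T$ the increasing path on $v_{1}<v_{2}<v_{3}<v_{4}$ (edges $v_{1}v_{2},v_{2}v_{3},v_{3}v_{4}$): an edge-disjoint family of $m\ge\varepsilon n^{2}/3$ copies yields, via Cauchy--Schwarz applied to the middle edges, only $\Omega(\varepsilon^{2}n^{2})$ copies of $T$, whereas $\Omega(\poly(\varepsilon)\,n^{4})$ is required; a three-layer complete bipartite ordered graph (which is $T$-free) shows that products of the relevant positional degrees can vanish along edges even when many such edges are present. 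Hence the inductive invariant must be genuinely conditional --- tracking, at suitable order-thresholds, the joint behaviour of several positional degrees --- and keeping these conditional quantities under control with only polynomial loss is exactly the point at which the conjecture currently stands open. A clean structural description of $T$-free ordered graphs, or a positional-degree invariant provably closed under the level-contraction step, would most likely be the decisive ingredient.
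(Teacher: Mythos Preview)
The statement you were asked to prove is stated in the paper as a \emph{conjecture}, not a theorem; the authors write explicitly that they ``were unable to prove that the converse also holds in general, and leave it as an interesting open problem.'' So there is no proof in the paper to compare against, and your proposal is appropriately candid in arriving at the same conclusion: you identify the supersaturation step for general ordered forests as the obstacle and state that this is ``exactly the point at which the conjecture currently stands open.''

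On the part that \emph{is} in the paper: your reduction (b) from general $F$ to the case where $F$ itself is a forest matches the paper's argument closely. The paper observes that if $K=\core(F)$ and the $K$-removal lemma has polynomial bounds, then so does the $F$-removal lemma, via the same mechanism you describe --- pass from $\varepsilon$-farness to many copies of $K$, then use a K\H{o}v\'ari--S\'os--Tur\'an-type count on the $k$-uniform hypergraph of $K$-copies to produce $\poly(\varepsilon)\,n^{v(F)}$ copies of the complete $k$-partite hypergraph $K^{(k)}_{s_1,\dots,s_k}$ with parts in the correct order, each of which contains a copy of $F$. Your formulation via ``robust copies of $T$ with dense windows'' is a slightly different packaging of the same idea.

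Your part (a) is fine but not strictly needed for the reduction as the paper phrases it (the paper works directly from $\varepsilon$-farness from $K$-freeness). Your part (c) goes further than the paper in analysing specific small cases (the increasing $P_3$ and $P_4$) and in articulating why positional-degree bookkeeping does not obviously propagate through a branching tree; this is a useful diagnosis of the difficulty, but as you yourself note, it is not a proof. In summary: your proposal is consistent with the paper's treatment --- both reduce to forests and both stop there --- and your additional discussion of the obstruction is reasonable but does not close the gap.
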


In order to prove Conjecture \ref{conj:noninduced}, it is enough to show that it holds when $F$ itself is a forest. Indeed, let $K = \core(F)$ and suppose that the $K$-removal lemma has polynomial bounds. Let us assume that $V(K) = [k]$ and that the vertex order on $V(K)$ is given by the natural order on $[k]$. Let $s_i$ be the number of vertices of $F$ mapped to $i \in [k]$ under some fixed homomorphism $\varphi : F \rightarrow K$. 
Let $G$ be an ordered graph which is $\varepsilon$-far from being $F$-free. Then $G$ is also $\varepsilon$-far from being $K$-free (because $K$ is a subgraph of $F$), and hence $G$ contains at least $\delta n^{k}$ copies of $K$, where $\delta = \delta_K(\varepsilon) = \poly(\varepsilon)$. Consider the $k$-uniform hypergraph on $V(G)$ whose edges correspond to copies of $K$. It is easy to show, using a standard K\H{o}v\'ari-S\'os-Tur\'an-type argument, that this hypergraph contains at least $(1 - o(1))\delta^{s_1 \cdots s_k}n^{s_1 + \dots + s_k} = \poly(\varepsilon)n^{v(F)}$ copies of the complete $k$-uniform hypergraph $K^{(k)}_{s_1,\dots,s_k}$ in which the side of size $s_i$ appears before the side of size $s_j$ for every $1 \leq i < j \leq k$. Every such copy of $K^{(k)}_{s_1,\dots,s_k}$ contains a copy of $F$ in $G$. 

We remark that ordered forests are quite hard to analyze in many contexts, so it is not surprising that the corresponding question about removal lemmas is also difficult. For example, the extremal numbers (also known as Tur\'an numbers) of ordered forests are already not understood. This problem is the subject of the celebrated F\"uredi-Hajnal conjecture \cite{FH92}, see \cite{KTTW19} for the state of the art. 

Let us note that the results of this paper can also be stated in the language of {\em property testing}. A {\em tester} for a graph property $\mathcal{P}$ is a randomized algorithm which, given an input graph $G$ and an approximation parameter $\varepsilon$, distinguishes between the case that $G$ satisfies $\mathcal{P}$ and the case that $G$ is $\varepsilon$-far from $\mathcal{P}$, with success probability at least $\frac{2}{3}$ in both cases. Here, $G$ being {\em $\varepsilon$-far} from $\mathcal{P}$ means that one must add/delete at least $\varepsilon n^2$ edges to turn $G$ into a graph satisfying $\mathcal{P}$, where $n = |V(G)|$. The algorithm works by sampling vertices of $G$ and making edge-queries on pairs of sampled vertices. The measure of complexity, called {\em query complexity}, is the total number of queries that the algorithm makes. 
It turns out that many properties can be tested with query complexity which depends only on $\varepsilon$, i.e., is independent of the size of the input graph. This is in particular true for every hereditary property, as proved in \cite{AS_hereditary} for (unordered) graphs, and in \cite{ABF17} for ordered graphs.
A tester has {\em one-sided error} if it outputs the correct answer with probability $1$ in the case that $G$ satisfies $\mathcal{P}$. It is not hard to see that the optimal query complexity of a one-sided-error tester for (induced) $F$-freeness is essentially given by the bounds for the (induced) $F$-removal lemma. Hence, Theorem \ref{thm:induced} implies that in ordered graphs, induced $F$-freeness can be tested with query complexity $\poly(1/\varepsilon)$ with one-sided error if and only if $|V(F)| = 2$ or $F \in \left\{D,D^{\leftarrow},\overline{D},\overline{D^{\leftarrow}}\right\}$. For more on property testing, we refer the reader to the book of Goldreich \cite{Goldreich}.

\subsection{Preliminaries}
Given an (ordered) graph $G$, we denote by $v(G)$ its number of vertices, and by $e(G)$ its number of edges. Also, if $A,B\subseteq V(G)$ are disjoint, then $E_{G}(A,B)=E(A,B)$ is the set of edges between $A$ and $B$, and $e_{G}(A,B)=e(A,B)=|E(A,B)|$. Moreover, $\bar{E}(G)$ is the set of non-edges of $G$, $\bar{e}(G)=|\bar{E}(G)|$, $\bar{E}_{G}(A,B)=\bar{E}(A,B)$ is the set of non-edges between $A$ and $B$, and $\bar{e}_{G}(A,B)=\bar{e}(A,B)=|\bar{E}(A,B)| = |A||B| - e(A,B)$.

For a vertex $v$ and a set $X$, we denote by $N_X(v)$ the neighbourhood of $v$ inside $X$. We denote by $d(X) = e(X)/\binom{|X|}{2}$ the {\em density} of $X$, where $e(X)$ is the number of edges inside $X$. We say that a graph is a {\em disjoint union of cliques} if its vertex set partitions into cliques with no edges between them (equivalently, if the graph has no induced path with three vertices).

Given a set $X$ and a linear ordering $\leq$ on $X$, an \emph{interval} in $X$ is a set of the form $\{x\in X: a\leq x\leq b\}$ for some $a,b\in X$. We say that an (ordered) graph $G$ on $n$ vertices is \emph{$\varepsilon$-far} from an (ordered) graph property $\mathcal{P}$, if one has to add/delete at least $\varepsilon n^{2}$ edges in $G$ in order to turn it into a graph which has property $\mathcal{P}$. 

As promised in the introduction, let us show that $\core(G)$ is well defined. Let $K$ be a smallest subgraph of $G$ to which there is a homomorphism from $G$. Observe that every homomorphism $\varphi$ from $K$ to itself is surjective and hence bijective; indeed, otherwise one could compose $\varphi$ with a homomorphism from $G$ to $K$ to obtain a homomorphism from $G$ to a proper subgraph of $K$, in contradiction to the minimality of $K$. It follows that every homomorphism from $K$ to itself is an isomorphism. Since a homomorphism must also preserve the vertex order, it follows that the only homomorphism from $K$ to itself is the identity map. 
An ordered graph $K$ with the property that every homomorphism from $K$ to itself is the identity will be called a {\em core}. Note that $K$ is a core if and only if $\core(K) = K$. 

If $K_1,K_2$ are two smallest subgraphs of $G$ to which there are homomorphisms $\varphi_i : G \rightarrow K_i$, then $\varphi_1|_{V(K_2)}$ and $\varphi_2|_{V(K_1)}$ are both surjective, which implies that $K_1$ and $K_2$ are isomorphic.  

\section{Polynomial bounds for the induced-D-removal lemma}

In this section we prove the positive direction of Theorem \ref{thm:induced}. By symmetry with respect to complementation and order reversal, it is enough to prove the following:

\begin{theorem}\label{thm:P3}
	There exists $c>0$ such that the following holds. Let $G$ be an $n$-vertex ordered graph which is $\varepsilon$-far from being induced $D$-free. Then $G$ contains at least $\Omega(\varepsilon^{c}) \cdot n^3$ induced copies of $D$.
\end{theorem}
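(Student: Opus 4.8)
Let $D$ be the ordered graph on $x<y<z$ with edges $\{x,y\},\{x,z\}$; an induced copy of $D$ in $G$ is a triple $a<b<c$ with $ab,ac\in E(G)$ and $bc\notin E(G)$. Being induced $D$-free means: for every vertex $a$, the set $R(a)$ of neighbours of $a$ that come \emph{after} $a$ in the order spans a clique. I want to show that if $G$ has few induced copies of $D$, then $G$ is close to having this property, i.e.\ close to induced $D$-free. The strategy is the standard ``removal via fixing a small witness set'' argument, adapted to the ordered setting: suppose $G$ has at most $\eta n^3$ induced copies of $D$, where $\eta = \varepsilon^{C}$ for a constant $C$ to be chosen; I will exhibit a set of at most $\varepsilon n^2$ edge modifications after which no induced $D$ remains, contradicting $\varepsilon$-farness.

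\medskip

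\textbf{Key steps.} First, I would set up a \emph{cleaning/iterative} procedure. Call a vertex $a$ \emph{bad} if the after-neighbourhood $R(a)$ contains many (say $\geq \varepsilon n$) non-edges; equivalently $a$ is the apex of $\geq \varepsilon n$ induced copies of $D$ ranging over that non-edge together with... more precisely, $a$ together with a non-edge $\{b,c\}$ in $R(a)$ gives an induced $D$. So if there are $\geq \varepsilon n/3$ bad vertices, then summing over them we already get $\gtrsim \varepsilon^2 n^3$ induced copies of $D$, contradicting the hypothesis once $\eta < \varepsilon^2/3$. Hence all but $< \varepsilon n/3$ vertices $a$ have $\bar e(R(a)) < \varepsilon n$; for each such ``good'' vertex, delete all edges from $a$ to one endpoint of each non-edge inside $R(a)$ — at most $\varepsilon n$ deletions per good vertex — to make $R(a)$ a clique. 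For the $< \varepsilon n/3$ bad vertices, simply delete \emph{all} their incident edges (at most $(\varepsilon n/3)\cdot n$ deletions). The total is $O(\varepsilon n^2)$. The subtlety: deleting an edge $ac$ to fix the non-edge $\{b,c\}\subseteq R(a)$ could in principle create \emph{new} induced copies of $D$ elsewhere (a previously-present edge $ac$ was filling in the non-edge of some other copy with apex before $a$, or $a$ loses a neighbour so some copy with apex $a$ is destroyed but another where $a$ plays the role of $b$ or $c$ is... ). Actually deleting edges only — never adding — means we only remove edges $ac$, so the only way to create a new induced $D$ is to create a \emph{new non-edge}. So after processing vertex $a$ and deleting some of its after-edges, a new induced $D$ with the deleted pair $\{a,c\}$ as its non-edge would need an apex $w<a$ with $wa,wc\in E$; this means $a,c\in R(w)$. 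This is exactly why a \emph{single-pass cleaning won't obviously work}, and I'd instead argue more carefully: process vertices in order and show the number of copies created is controlled, or — cleaner — use the following reformulation.

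\medskip

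\textbf{Reformulation that avoids the interaction.} Observe that $G$ is induced $D$-free iff for every $a$, $R(a)$ is a clique. Define, for a threshold $t$, the auxiliary graph $H$ on $V(G)$ where $\{u,v\}$ (with $u<v$) is an edge of $H$ iff $\{u,v\}\notin E(G)$ but $u,v\in R(a)$ for \emph{some} common earlier neighbour $a$; equivalently $H$ records all ``problematic'' non-edges. If $e(H)$ is small, say $< \varepsilon n^2$, then \emph{adding} all edges of $H$ to $G$ makes every $R(a)$ a clique — wait, one must check adding these edges does not create new induced $D$'s, but adding edges can only destroy non-edges, hence can only destroy induced $D$'s, not create them. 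So $G$ would be $\varepsilon$-close to induced $D$-free, contradiction. Therefore $e(H) \geq \varepsilon n^2$, i.e.\ there are at least $\varepsilon n^2$ non-edges $\{u,v\}$ of $G$ each lying inside some $R(a)$. Now I want: many of these non-edges lie in \emph{many} different $R(a)$'s, or rather, a counting argument that turns $\geq\varepsilon n^2$ problematic non-edges into $\geq \poly(\varepsilon)\,n^3$ induced copies of $D$. For each problematic non-edge $\{u,v\}$ pick one witness $a(u,v)$; this only gives $\geq \varepsilon n^2$ copies, not enough. The fix: I claim a constant fraction of the problematic non-edges have $\geq \varepsilon n/2$ witnesses, or we can remove them cheaply. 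Indeed, let $B$ be the set of problematic non-edges with $< \varepsilon n/2$ witnesses. For each such $\{u,v\}\in B$ and each of its $<\varepsilon n/2$ witnesses $a$, delete the edge $au$ (say); this destroys $\{u,v\}$ as a problematic pair. Total deletions $\leq |B|\cdot \varepsilon n /2 \leq n^2 \cdot \varepsilon n/2$ — too many. So I must be smarter about which edge to delete, ensuring each deletion is ``charged'' only $O(1)$ times; alternatively iterate the cleaning and control the potential. \emph{This charging/iteration — ensuring the modifications to kill low-multiplicity problematic non-edges total only $O(\varepsilon n^2)$ while not regenerating problems — is the main obstacle}, and I expect it to be handled by a two-round argument: first a cleaning pass bounding vertices of high ``problematic out-degree'', then observing the residual problematic non-edges each have $\Omega(\varepsilon n)$ witnesses, yielding $\Omega(\varepsilon)\cdot\varepsilon n^2\cdot \varepsilon n = \Omega(\varepsilon^3)n^3$ induced copies of $D$ (so $c=3$ works, or some small constant), contradicting $\eta = \varepsilon^c$ with $c$ chosen accordingly.

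\medskip

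\textbf{Summary of the approach.} (i) Assume $G$ has $\leq \varepsilon^c n^3$ induced $D$'s; aim for a contradiction with $\varepsilon$-farness. (ii) Since adding edges never creates induced $D$'s, the cheapest repair is to add, for each $a$, the missing edges of $R(a)$; if this costs $< \varepsilon n^2$ we are done, so the number $m$ of distinct ``problematic'' non-edges (non-edges of $G$ contained in some $R(a)$) satisfies $m \geq \varepsilon n^2$. (iii) By a cleaning step, after deleting $O(\varepsilon n^2)$ edges incident to $O(\varepsilon n)$ ``heavy'' vertices, we may assume every vertex $a$ has $\bar e(R(a)) = O(\varepsilon n)$... no: rather, use cleaning to ensure each surviving problematic non-edge has $\Omega(\varepsilon n)$ witnesses. (iv) Count: $\Omega(\varepsilon n^2)$ problematic non-edges, each with $\Omega(\varepsilon n)$ apexes, gives $\Omega(\varepsilon^2) n^3$ induced copies of $D$, contradicting (i) for $c \geq 3$. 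The delicate point throughout is that \emph{deletions} used in the cleaning must not manufacture new problematic non-edges faster than they are destroyed; I would resolve this by processing vertices in the order $\leq$ and noting that deleting an after-edge $au$ of $a$ can only create a problematic non-edge $\{a,u\}$ witnessed by some $w<a$, and bounding the total such creation by the same budget (a standard potential/charging argument), or by a clean ``one-shot'' version as in step (ii). I expect the final constant $c$ to be small (around $3$), well within the $\poly(1/\varepsilon)$ regime claimed in Theorem~\ref{thm:P3}.
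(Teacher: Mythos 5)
Your argument has a genuine gap at its central step. You assert that ``adding edges can only destroy non-edges, hence can only destroy induced $D$'s, not create them.'' This is false: $D$ is not a complete graph --- it has the non-edge $\{y,z\}$ --- so induced $D$-freeness is not monotone under edge addition. Adding the edge $\{x,y\}$ or $\{x,z\}$ of a potential copy (while $\{y,z\}$ stays a non-edge) creates a new induced copy. Concretely, take vertices $1<2<3<4$ with edges $\{1,2\},\{1,4\},\{2,3\}$. The unique induced copy of $D$ is $1<2<4$, so the unique ``problematic'' non-edge is $\{2,4\}$; adding it produces the new induced copy $2<3<4$. Hence your one-shot repair in step (ii) does not terminate in an induced $D$-free graph, and the dichotomy ``either the repair is cheap or there are $\geq \varepsilon n^2$ problematic non-edges'' collapses. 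The same non-monotonicity is why your single-pass deletion cleaning (processing vertices in order) is problematic, as you yourself note: every modification, whether addition or deletion, can manufacture new copies, and you never bound the cascade.

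The second half of your plan --- arranging that each surviving problematic non-edge has $\Omega(\varepsilon n)$ witnesses via ``a standard potential/charging argument'' --- is precisely the hard content of the theorem, and you explicitly leave it unresolved (``this charging/iteration \ldots is the main obstacle''). The paper does not resolve it by charging; it proves a structure theorem for graphs with few induced copies of $D$: a partition into $O(1/\varepsilon_1)$ near-cliques plus a sparse remainder, an exact characterization of when the union of two cliques is induced $D$-free (a prefix/suffix interval structure), removal lemmas for pairs of cliques, an interval refinement controlled by an ordered-sequence removal lemma, and finally a six-step repair in which additions and deletions are carefully confined so that each step cannot recreate the copies destroyed by earlier steps. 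The resulting exponent $c$ is very large (the proof cascades through $\varepsilon_1=\varepsilon^5/1000$, $\varepsilon_2\approx\varepsilon^{183}$, and beyond), which is a further indication that your expectation of $c\approx 3$ via a two-round cleaning substantially underestimates the difficulty. To make progress along your lines you would need, at minimum, a correct replacement for the false monotonicity claim and a termination/charging argument for the cleaning; neither is supplied.
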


We prepare the proof with a few lemmas. One of our key lemmas shows that an ordered graph with few induced copies of $D$ can be partitioned into a constant number of almost-cliques and one additional set which contains few edges. In order to prove this, we first show that if $G$ is a dense ordered graph containing few copies of $D$, then $G$ contains a very dense subset of linear size.

\begin{lemma}\label{lem:extract_clique}
 	Let $G$ be an $n$-vertex ordered graph and let $\gamma,\delta > 0$. If $e(G) \geq \gamma n^2$ and $G$ contains at most $\delta \gamma^3 n^3/32$ induced copies of $D$, then there exists $X \subseteq V(G)$ with $|X| \geq \gamma n/2$ and $d(X) \geq 1 - \delta$.
 \end{lemma}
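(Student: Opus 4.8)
The plan is to locate a single vertex $v$ whose right-neighbourhood $N^+(v) := \{u \in V(G) : v < u,\ \{v,u\} \in E(G)\}$ is simultaneously large and almost complete, and to take $X = N^+(v)$. The key observation underlying everything is that the induced copies of $D$ in $G$ are counted exactly by $\sum_{v \in V(G)} \bar{e}(N^+(v))$. Indeed, since embeddings of ordered graphs preserve the order, in any induced copy of $D$ the image of the smallest vertex $x$ is the leftmost of the three chosen vertices; it is adjacent to the other two (these are the edges $\{x,y\},\{x,z\}$ of $D$), while those two form a non-edge (since $\{y,z\}\notin E(D)$). Hence an induced copy of $D$ with leftmost vertex $v$ is precisely a non-edge of $G$ lying inside $N^+(v)$, and summing over all choices of $v$ gives the identity.

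First I would pass to the set $A := \{v \in V(G) : |N^+(v)| \ge \gamma n/2\}$ of vertices of large right-degree. Since $\sum_{v} |N^+(v)| = e(G) \ge \gamma n^2$ and the vertices outside $A$ contribute less than $n \cdot \gamma n/2 = \gamma n^2/2$ to this sum, we get $\sum_{v \in A} |N^+(v)| \ge \gamma n^2/2$; as each summand is at most $n$, this forces $|A| \ge \gamma n/2$. Now, by the identity above, $\sum_{v \in A}\bar{e}(N^+(v)) \le \delta\gamma^3 n^3/32$, so by averaging there is a vertex $v \in A$ with $\bar{e}(N^+(v)) \le \frac{\delta\gamma^3 n^3/32}{|A|} \le \frac{\delta\gamma^3 n^3/32}{\gamma n/2} = \frac{\delta\gamma^2 n^2}{16}$. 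Setting $X := N^+(v)$, we have $|X| \ge \gamma n/2$, and since $\binom{|X|}{2} \ge |X|^2/4 \ge \gamma^2 n^2/16$ (using $|X| \ge 2$), we conclude $d(X) = 1 - \bar{e}(X)/\binom{|X|}{2} \ge 1 - \delta$, which is exactly the assertion.

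I do not expect a genuine obstacle: the whole proof is one clean double-counting identity followed by an averaging over high-degree vertices, and the constant $1/32$ in the hypothesis is chosen precisely so that the computation lands at density $1-\delta$. The only minor points to watch are the bookkeeping of these constants and the degenerate case in which $\gamma n$ is so small that $|X|$ could be a single vertex (so that $d(X)$ is not meaningful); this regime plays no role in the intended application, where $n$ is large, and I would simply dispose of it with a one-line remark at the start.
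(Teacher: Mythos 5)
Your proof is correct and follows essentially the same route as the paper's: both identify the set of at least $\gamma n/2$ vertices with forward degree at least $\gamma n/2$ and use the fact that induced copies of $D$ with leftmost vertex $v$ correspond to non-edges inside the forward neighbourhood of $v$; the paper phrases the final step as a contradiction (every such vertex failing the density condition would force too many copies of $D$) where you phrase it as an averaging argument, but these are the same computation. The minor boundary issues you flag (e.g.\ $|X|\ge 2$) are present in the paper's proof as well and are harmless in the intended regime.
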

\begin{proof}
	For each $v \in V(G)$, let $N_v$ be the forward neighbourhood of $v$, namely, the set of vertices $u > v$ such that $\{u,v\} \in E(G)$. Since $\sum_{v \in V(G)}|N_v| = e(G) \geq \gamma n^2$, there is a set $U$ of at least $\gamma n/2$ vertices $v$ with $|N_v| \geq \gamma n/2$. If there exists $v\in U$ such that $d(N_v) \geq 1-\delta$, then we are done. Otherwise, each $v\in U$ is the first vertex in at least $\delta \binom{|N_v|}{2} \geq \delta \binom{\gamma n/2}{2} \geq \delta \gamma^2 n^2/16$ induced copies of $D$. Altogether, this gives 
	$\gamma n/2 \cdot \delta \gamma^2 n^2/16 = \delta \gamma^3 n^3/32$ induced copies of $D$, a contradiction. 
\end{proof}
\begin{lemma}\label{lem:partition}
	Let $G$ be an $n$-vertex ordered graph and let $\gamma,\delta > 0$. If $G$ contains at most $\delta \gamma^3 n^3/32$ induced copies of $D$, then there is a partition $V(G) = X_1 \cup \dots \cup X_m \cup Y$ with the following properties:
	\begin{enumerate}
		\item $|X_i| \geq \gamma n/2$ and $d(X_i) \geq 1 - \delta$ for $i = 1,\dots,m$.
		\item $e(Y) \leq \gamma n^2$.
	\end{enumerate}
\end{lemma}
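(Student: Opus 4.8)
The plan is to build the partition greedily by repeatedly peeling off dense sets via Lemma~\ref{lem:extract_clique}. Set $G_1 := G$. Having constructed $G_i$, an induced subgraph of $G$ on $n_i$ vertices: if $e(G_i) < \gamma n^2$, stop and output $m := i-1$ and $Y := V(G_i)$; otherwise use Lemma~\ref{lem:extract_clique} to extract a set $X_i \subseteq V(G_i)$ that is dense and linear in size, put $G_{i+1} := G_i - X_i$, and repeat. Since every $G_i$ is an induced subgraph of $G$, it contains at most $\delta\gamma^3 n^3/32$ induced copies of $D$, so that part of the hypothesis of Lemma~\ref{lem:extract_clique} is automatically inherited at each step. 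Once the process stops we obtain a partition $V(G) = X_1 \cup \dots \cup X_m \cup Y$ with $e(Y) = e(G_{m+1}) < \gamma n^2$, giving property~(2).

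The one subtlety — and the only place real care is needed — is that Lemma~\ref{lem:extract_clique} is scale-sensitive: applied to an $n_i$-vertex graph with parameter $\gamma$, it only returns a set of size $\geq \gamma n_i/2$, which is too small once $n_i < n$. To obtain property~(1) as stated (with the original $n$), I would instead apply Lemma~\ref{lem:extract_clique} to $G_i$ with the inflated parameter $\gamma_i := \gamma n^2/n_i^2 \geq \gamma$ and the same $\delta$. Its edge hypothesis becomes $e(G_i) \geq \gamma_i n_i^2 = \gamma n^2$, which is exactly the condition that put us in this branch; its induced-$D$ hypothesis becomes that $G_i$ has at most $\delta\gamma_i^3 n_i^3/32 = \delta\gamma^3 n^6/(32 n_i^3) \geq \delta\gamma^3 n^3/32$ induced copies of $D$, which we already know. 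The lemma then yields $X_i$ with $d(X_i) \geq 1-\delta$ and $|X_i| \geq \gamma_i n_i/2 = \gamma n^2/(2 n_i) \geq \gamma n/2$, as required.

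Termination is immediate: each removed set $X_i$ has at least $\gamma n/2$ vertices, so the process runs for at most $2/\gamma$ steps; hence $m \leq 2/\gamma$, and combining this with the two properties verified above completes the proof. I do not anticipate any genuine obstacle beyond the rescaling described in the second paragraph, together with the harmless tacit assumption that $n$ is large enough in terms of $\gamma$ for the extracted sets to have at least two vertices, so that their density is well defined.
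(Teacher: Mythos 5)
Your proposal is correct and is essentially identical to the paper's own proof: the paper runs the same greedy peeling process and applies Lemma~\ref{lem:extract_clique} to $G[V_i]$ with the same rescaled parameter $\beta := \gamma n^2/|V_i|^2$ (your $\gamma_i$), verifying the same two inequalities for the hypothesis and the same lower bound $\beta|V_i|/2 \geq \gamma n/2$ for the extracted set. Nothing further is needed.
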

\begin{proof}
	Set $V_0 = V(G)$. For $i \geq 0$, if $e(V_i) \leq \gamma n^2$ then set $Y = V_i$ and stop. Otherwise, set $\beta := \gamma n^2/|V_i|^2$ and
	apply Lemma \ref{lem:extract_clique} to $G[V_i]$ with parameters $\beta$ (instead of $\gamma$) and $\delta$. Note that $\delta \beta^3 |V_i|^3/32 = \delta\gamma^3 n^6/(32|V_i|^3) \geq
	\delta\gamma^3 n^3/32$. Hence, the number of induced copies of $D$ in $G[V_i]$ is at most $\delta \beta^3 |V_i|^3/32$, as required by Lemma \ref{lem:extract_clique}. Therefore, there is a set $X_{i+1} \subseteq V_i$ satisfying $d(X_{i+1}) \geq 1-\delta$ and 
	$|X_{i+1}| \geq \beta |V_i|/2 = \frac{\gamma n^2}{2|V_i|} \geq \gamma n/2$. Set $V_{i+1} = V_i \setminus X_{i+1}$. This process eventually terminates, resulting in the desired sequence $X_1,\dots,X_m,Y$.
\end{proof}

Let $G$ be an ordered graph. For disjoint $A,B \subseteq V(G)$, we write $A < B$ to mean that $a < b$ for all $a \in A, b\in B$. A subset $A' \subseteq A$ is called a {\em suffix} of $A$ if for all $a_1,a_2 \in A$, if $a_1 \in A'$ and $a_1 < a_2$ then $a_2 \in A'$. Similarly, $A'$ is a {\em prefix} of $A$ if for all $a_1,a_2 \in A$, if $a_2 \in A'$ and $a_1 < a_2$ then $a_1 \in A'$. 

We now describe the structure of an induced $D$-free graph consisting of two cliques. Lemma \ref{lem:charac_simple} handles the case when one of the cliques precedes the other one. Lemma \ref{lem:charac_general} then takes care of the general case.
\begin{lemma}\label{lem:charac_simple}
	Let $G$ be an ordered graph and let $A,B \subseteq V(G)$ be disjoint cliques with $A < B$. Then $G[A \cup B]$ is induced $D$-free if and only if for every $b \in B$, $N_A(b)$ is a suffix of $A$. 
\end{lemma}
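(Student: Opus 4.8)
The plan is to characterize the induced copies of $D$ inside $G[A\cup B]$ directly, by a short case analysis on how the three vertices of such a copy are distributed between $A$ and $B$. Recall that an induced copy of $D$ is a triple $p<q<r$ of vertices of $G$ with $\{p,q\},\{p,r\}\in E(G)$ and $\{q,r\}\notin E(G)$. Since $A<B$ and $p\le q\le r$, there are only four possible membership patterns for $(p,q,r)$: all three in $A$; $p,q\in A$ and $r\in B$; $p\in A$ and $q,r\in B$; or all three in $B$. (That $p,q,r$ are distinct is automatic since $p<q<r$.)

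First I would eliminate three of these four cases using only the hypothesis that $A$ and $B$ are cliques. If $p,q,r$ all lie in $A$, or all lie in $B$, then $\{q,r\}$ is an edge of $G$, contradicting that $\{q,r\}$ is a non-edge of $D$. Similarly, if $p\in A$ and $q,r\in B$, then $\{q,r\}\in E(G)$ because $B$ is a clique, again a contradiction. Hence the only pattern that can realize an induced $D$ is $p,q\in A$ and $r\in B$; in that case $\{p,q\}\in E(G)$ holds automatically since $A$ is a clique, so the triple spans an induced copy of $D$ precisely when $\{p,r\}\in E(G)$ and $\{q,r\}\notin E(G)$, i.e.\ when $p\in N_A(r)$ and $q\notin N_A(r)$.

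The remaining step is to translate this into the suffix condition. Fix $b\in B$. By definition, $N_A(b)$ fails to be a suffix of $A$ exactly when there exist $p<q$ in $A$ with $p\in N_A(b)$ and $q\notin N_A(b)$; by the previous paragraph (taking $r:=b$), this is exactly the condition that $\{p,q,b\}$ spans an induced copy of $D$ in $G[A\cup B]$. Therefore $G[A\cup B]$ contains an induced copy of $D$ if and only if $N_A(b)$ is not a suffix of $A$ for some $b\in B$, which is the contrapositive of the claimed equivalence.

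I expect no serious obstacle in this lemma: the whole argument is an elementary case analysis, and the only point needing a little care is checking that the list of four membership patterns is exhaustive, which is immediate from $A<B$ together with $p\le q\le r$.
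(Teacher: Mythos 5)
Your proposal is correct and follows essentially the same argument as the paper: both reduce to the observation that, since $A$ and $B$ are cliques with $A<B$, any induced copy of $D$ must have its first two vertices in $A$ and its last in $B$, and then note that for a fixed $b\in B$ such a copy exists precisely when $N_A(b)$ is not a suffix of $A$. No issues.
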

\begin{proof}
	Using that $A$ and $B$ are cliques, if $G$ contains three vertices $x<y<z$ such that $\{x,y\},\{x,z\}\in E(G)$ and $\{y,z\}\not\in E(G)$, then we must have $x,y\in A$ and $z\in B$. For a given $z$, the existence of such $x$ and $y$ is equivalent to the statement that $N_{A}(z)$ is not a suffix of $A$. 
\end{proof}
\begin{lemma}\label{lem:charac_general}
	Let $G$ be an ordered graph and let $A,B \subseteq V(G)$ be disjoint cliques. Then $G[A \cup B]$ is induced $D$-free if and only if there is a partition $V(G) = I \cup J$ into intervals with $I < J$, such that the following holds:
	\begin{enumerate}
		\item The bipartite graph between $A \cap I$ and  $B \cap I$ is empty. 
		\item The bipartite graph between $A \cap J$ and $B \cap J$ is complete, i.e. $(A \cup B) \cap J$ is a clique.
		\item For every $b \in B \cap J$, $N_{A \cap I}(b)$ is a suffix of $A \cap I$. \item For every $a \in A \cap J$, $N_{B \cap I}(a)$ is a suffix of $B \cap I$.
	\end{enumerate}
\end{lemma}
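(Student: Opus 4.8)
The plan is to handle both directions through one observation about edges between two cliques in an induced-$D$-free ordered graph (which, incidentally, refines Lemma~\ref{lem:charac_simple}). Call an edge with one endpoint in $A$ and the other in $B$ a \emph{crossing edge}. First I would record: if $\{a,b\}$ is a crossing edge with $a\in A$, $b\in B$ and $a<b$, then $b$ is adjacent to every $a'\in A$ with $a'\geq a$; symmetrically, if $\{a,b\}$ is a crossing edge with $b<a$, then $a$ is adjacent to every $b'\in B$ with $b'\geq b$. This is immediate from $A$ and $B$ being cliques: in the first case, for $a'\in A$ with $a'>a$, the triple $\{a,a',b\}$ (if $a'<b$) or $\{a,b,a'\}$ (if $a'>b$) would be an induced copy of $D$ with apex $a$ unless $a'\sim b$.

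For the ``if'' direction, suppose a partition $V(G)=I\cup J$ into intervals with $I<J$ satisfying properties 1--4 exists, and write $A_I=A\cap I$, $A_J=A\cap J$, $B_I=B\cap I$, $B_J=B\cap J$. Assume for contradiction that $x<y<z$ induce a copy of $D$ in $G[A\cup B]$, i.e.\ $x\sim y$, $x\sim z$, $y\not\sim z$; since $A,B$ are cliques, $y$ and $z$ lie in different cliques. I would split on the positions of $z$ and $y$. If $y,z\in J$, then $y,z\in(A\cup B)\cap J$, which is a clique by property~2, contradicting $y\not\sim z$. If $z\in I$, then $x,y,z\in I$ as well (as $I$ is a prefix), so $x,y,z\in A_I\cup B_I$ with $y,z$ in different parts; then $x$ shares a part with exactly one of $y,z$, and the edge joining $x$ to the other one is a crossing edge with both endpoints in $I$, contradicting property~1. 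In the remaining case $y\in I$ and $z\in J$ (so $x\in I$ too): say $y\in A_I$ and $z\in B_J$; since $x\sim y$ and $y\in A_I$, property~1 forces $x\in A_I$ (otherwise $\{x,y\}$ is a forbidden crossing edge in $I$); now $x\sim z$ gives $x\in N_{A_I}(z)$, which by property~3 is a suffix of $A_I$, so it also contains $y$ (as $x<y$), contradicting $y\not\sim z$. The case $y\in B_I$, $z\in A_J$ is symmetric, using property~4.

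For the ``only if'' direction, assume $G[A\cup B]$ is induced $D$-free; the task is to choose $J$. Observe that if some suffix of $A\cup B$ does not induce a clique, then no larger suffix does either, since the witnessing non-edge survives; hence there is a largest suffix $J_0$ of $A\cup B$ that induces a clique. Put $J=\{v\in V(G): v\geq\min J_0\}$ (and $J=\emptyset$ if $A\cup B=\emptyset$) and $I=V(G)\setminus J$; these are intervals with $I<J$, and $A\cap J=A\cap J_0$, $B\cap J=B\cap J_0$, so property~2 holds by the choice of $J_0$. For property~3: if $b\in B_J$ and $a\in N_{A_I}(b)$, then $a<b$ (as $a\in I$, $b\in J$), so by the observation $b$ is adjacent to every vertex of $A$ at or after $a$, in particular to every vertex of $A_I$ above $a$; hence $N_{A_I}(b)$ is a suffix of $A_I$. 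Property~4 is symmetric.

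The remaining point is property~1 --- that there are no crossing edges with both endpoints in $I$ --- and this is where I expect the main difficulty. Suppose $a\in A_I$ and $b\in B_I$ are adjacent; by symmetry assume $a<b$. By the observation, $b$ is adjacent to every vertex of $A$ above $b$, and trivially to every vertex of $B$ above $b$, so $b$ is adjacent to all of $\{w\in A\cup B: w>b\}$. If $\{w\in A\cup B: w\geq b\}$ induced a clique, it would be a suffix of $A\cup B$ strictly containing $J_0$ (as $b\notin J_0$), contradicting the maximality of $J_0$; hence $\{w\in A\cup B: w>b\}$ is not a clique and contains a non-edge $\{u,v\}$ with $b<u<v$. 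But $b\sim u$ and $b\sim v$, so $\{b,u,v\}$ induces a copy of $D$ with apex $b$ --- a contradiction. This establishes property~1 and completes the proof.
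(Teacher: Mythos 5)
Your proposal is correct and takes essentially the same approach as the paper: $J$ is chosen as the maximal clique-inducing suffix, and the four properties are verified by the same kind of case analysis (your preliminary observation is exactly the nontrivial direction of Lemma~\ref{lem:charac_simple}). The only divergence is in deriving item 1, where the paper case-checks a hypothetical crossing edge in $I$ against the largest element of $(A \cup B) \cap I$ and a non-neighbour of it in $J$ supplied by maximality, while you note that the later endpoint of such an edge would dominate everything above it and hence either create a copy of $D$ or extend the clique suffix --- an equivalent and slightly tidier argument.
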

\begin{proof}
	We first check that if there is a partition $V(G) = I \cup J$ satisfying 1-4 then $G[A \cup B]$ is induced $D$-free. Suppose that $x < y < z$ form an induced copy of $D$. Without loss of generality, suppose that $x \in A$. If $x \in J$ then $y,z \in J$, but $(A \cup B) \cap J$ is a clique, contradicting that $\{y,z\} \notin E(G)$. So we must have $x \in I$. If $z\in I$, then $y\in I$, but $I\cap (A\cup B)$ is the disjoint union of cliques by item 1, so $G[I\cap (A\cup B)]$ cannot contain an induced copy of $D$. Hence, $z\in J$. But then $y\in I$ by item 2. This also implies $y\in A$, as otherwise $\{x,y\}$ is a non-edge by item 1. It now follows that $z\in B$, as $\{y,z\}$ is a non-edge and $A$ is a clique. However, this setup runs into a contradiction with item 3, so $x,y,z$ cannot induce a copy \nolinebreak of \nolinebreak $D$.

	Now we show that if $G[A \cup B]$ is induced $D$-free then there is a partition $V(G) = I \cup J$ satisfying 1-4. Take $J$ to be a maximal suffix of $V(G)$ with the property that the bipartite graph between $A \cap J$ and $B \cap J$ is complete. So item 2 holds by definition. If $J = V(G)$ then we are done. Otherwise, setting $I = V(G) \setminus J$, we show that the bipartite graph between $A \cap I$ and $B \cap I$ is empty. 
	Let $x$ be the largest element of $(A \cup B) \cap I$, and suppose without loss of generality that $x \in A$. By the maximality of $J$, there is $y \in B \cap J$ with $\{x,y\} \notin E(G)$.
	Suppose, for contradiction, that $\{a,b\} \in E(G)$ for some $a \in A \cap I$, $b \in B \cap I$. Assume first that $a < b$. If $\{x,b\} \notin E(G)$ then $a,b,x$ form an induced copy of $D$, and if $\{x,b\} \in E(G)$ then $b,x,y$ form an induced copy of $D$. Assume now that $b < a$. If $\{a,y\} \notin E(G)$ then $b,a,y$ form an induced copy of $D$, and if $\{a,y\} \in E(G)$ then $a,x,y$ form an induced copy of $D$. This proves item 1. Items 3-4 follow from Lemma \ref{lem:charac_simple}. 
\end{proof} 
\noindent
We now prove the removal versions of Lemmas \ref{lem:charac_simple} and \ref{lem:charac_general}. 
\begin{lemma}\label{lem:removal clique_pair simple}
	Let $G$ be an ordered graph and let $A,B \subseteq V(G)$ be disjoint cliques with $A < B$. For $\gamma > 0$, if $G[A \cup B]$ contains at most $\gamma^2|A|^2|B|/4$ induced copies of $D$, then $G[A \cup B]$ can be made induced $D$-free by adding/deleting at most $\gamma |A||B|$ edges between $A$ and $B$.
\end{lemma}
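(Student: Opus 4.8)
The plan is to use the structural characterization from Lemma~\ref{lem:charac_simple}: $G[A\cup B]$ is induced $D$-free iff $N_A(b)$ is a suffix of $A$ for every $b\in B$. So the goal is to modify few edges between $A$ and $B$ so that each $b\in B$ has its $A$-neighbourhood equal to a suffix of $A$. The natural idea is to treat each $b\in B$ separately: for each $b$, replace $N_A(b)$ by the suffix of $A$ of size $|N_A(b)|$ (or some nearby suffix). The number of edge changes needed for a single $b$ is twice the number of ``misplaced'' vertices, i.e. essentially the symmetric difference between $N_A(b)$ and the closest suffix of $A$. So I need to bound $\sum_{b\in B} \mathrm{dist}(b)$, where $\mathrm{dist}(b)$ measures how far $N_A(b)$ is from being a suffix, by $O(\gamma|A||B|)$, given that the number of induced $D$'s is at most $\gamma^2|A|^2|B|/4$.

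First I would make the connection between $\mathrm{dist}(b)$ and the number of induced copies of $D$ with last vertex $b$. By Lemma~\ref{lem:charac_simple}'s proof, $x<y<z=b$ is an induced $D$ iff $x,y\in A$, $x\in N_A(b)$, $y\notin N_A(b)$. So the number of induced $D$'s ending at $b$ is exactly the number of pairs $x<y$ in $A$ with $x\in N_A(b)$, $y\notin N_A(b)$ — call this $p(b)$. This is a ``number of inversions''-type quantity. The key elementary fact is: if $p(b)\le \gamma^2|A|^2/4$, then one can turn $N_A(b)$ into a suffix of $A$ by changing at most $\gamma|A|$ of the adjacencies between $b$ and $A$. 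Indeed, let $k=|N_A(b)|$ and let $S$ be the suffix of $A$ of size $k$; let $t = |S\setminus N_A(b)| = |N_A(b)\setminus S|$ be the number of changes needed divided by... more precisely we need $2t$ changes? No — changing $N_A(b)$ to $S$ requires adding $t$ edges and deleting $t$ edges, total $2t$. But each of the $t$ vertices in $N_A(b)\setminus S$ lies strictly before each of the $t$ vertices of $A\setminus N_A(b)$ that are ``deep'' — one shows $p(b)\ge t^2$ is false in general, so instead I'd argue $p(b)\ge t$ trivially and more carefully $p(b)\ge \binom{t}{1}\cdot$something. The cleanest route: order $A$ and sweep; the minimum number of adjacency changes to make $N_A(b)$ a suffix equals $\min_j(|\{x\le j: x\in N_A(b)\}| + |\{x>j: x\notin N_A(b)\}|)$ over cut points $j$... hmm, this is the number of ``bad'' positions relative to the best threshold. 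Denote this minimum by $c(b)$. Then a standard averaging/Cauchy--Schwarz argument shows $p(b) \ge c(b)^2/4$: indeed if $c(b) > \gamma|A|$ then for every threshold there are more than $\gamma|A|/2$ neighbours above it or more than $\gamma|A|/2$ non-neighbours below it, and pairing these up across the ``median'' neighbour/non-neighbour gives at least $(\gamma|A|/2)^2 = \gamma^2|A|^2/4$ crossing pairs, i.e. $p(b) \ge \gamma^2|A|^2/4$.

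With that claim in hand, I would let $B' = \{b\in B : c(b) > \gamma|A|\}$. For each $b\in B'$ we have $p(b) > \gamma^2|A|^2/4$, so the total count of induced $D$'s is at least $|B'|\cdot\gamma^2|A|^2/4$. Since this is at most $\gamma^2|A|^2|B|/4$ by hypothesis, we get $|B'| \le |B|$ — which is not strong enough on its own. To fix the constant I'd instead set the threshold at $\gamma|A|/2$ and track constants carefully, or more simply: for $b\notin B'$, fixing $N_A(b)$ costs at most $c(b)\le \gamma|A|$ changes, contributing at most $\gamma|A||B|$ in total over all such $b$; for $b\in B'$, fixing $N_A(b)$ costs at most $|A|$ changes each (just delete all edges from $b$ to $A$, making $N_A(b)=\emptyset$, the empty suffix), contributing at most $|A||B'|$. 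Now $|B'|\cdot \gamma^2|A|^2/4 \le \gamma^2|A|^2|B|/4$ gives $|B'|\le|B|$, still weak; so I'd rebalance the two thresholds: pick threshold $\tau$ so that changes for good $b$ total $\le \tau|B| \le (\gamma/2)|A||B|$, i.e. $\tau = (\gamma/2)|A|$, and then $|B'|\le 4|B|\cdot(p\text{-count})/(\tau^2) $ forces $|B'|\cdot|A| \le$ (something). Working it out: with threshold $(\gamma/2)|A|$, each $b\in B'$ has $p(b)\ge (\gamma|A|/4)^2 = \gamma^2|A|^2/16$... I need $\gamma^2|A|^2|B|/4 \ge |B'|\gamma^2|A|^2/16$, so $|B'|\le 4|B|$, useless.

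The real obstacle — and the step I expect to require care — is getting the constant exactly $\gamma|A||B|$ rather than $O(\gamma|A||B|)$, which means I should not split into a ``good'' and ``bad'' set but rather bound $\sum_{b\in B} c(b)$ directly by $\gamma|A||B|$. This follows if $\sum_b c(b) \le \gamma|A||B|$, and by the inequality $p(b)\ge c(b)^2/4$ (equivalently $c(b)\le 2\sqrt{p(b)}$) together with Cauchy--Schwarz: $\sum_b c(b) \le 2\sum_b\sqrt{p(b)} \le 2\sqrt{|B|}\sqrt{\sum_b p(b)} = 2\sqrt{|B|}\sqrt{(\#\text{induced }D)} \le 2\sqrt{|B|}\cdot\sqrt{\gamma^2|A|^2|B|/4} = \gamma|A||B|$. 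So the whole proof reduces to (i) the identity $\#\{\text{induced }D\text{ ending at }b\} = p(b)$, (ii) the elementary bound $c(b) \le 2\sqrt{p(b)}$ where $c(b)$ is the minimum number of $b$-to-$A$ adjacency changes making $N_A(b)$ a suffix, and (iii) Cauchy--Schwarz. I expect (ii) to be the only nontrivial point: I would prove it by choosing the threshold to be the position of the $\lceil |N_A(b)|\text{ vs }\text{complement}\rceil$-balanced cut and double-counting inversions, or by a clean exchange argument showing that each unit of $c(b)$ beyond the optimum forces a distinct block of inversions.
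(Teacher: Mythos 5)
Your proposal is correct and follows essentially the same route as the paper: reduce via Lemma~\ref{lem:charac_simple} to making each $N_A(b)$ a suffix of $A$, and observe that the induced copies of $D$ containing a given $b\in B$ are exactly the pairs $x<y$ in $A$ with $x\in N_A(b)$ and $y\notin N_A(b)$. The only difference is in the bookkeeping: you bound the optimal per-vertex cost by $c(b)\le 2\sqrt{p(b)}$ and apply Cauchy--Schwarz, whereas the paper fixes for each $b$ the suffix with exactly $\gamma|A|/2$ non-neighbours and sums linearly; both yield exactly $\gamma|A||B|$, and your key inequality $p(b)\ge c(b)^2/4$ is true and is provable by the median-threshold pairing you sketch.
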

\begin{proof}
	Fix any $b \in B$. If $b$ has at most $\gamma |A|/2$ non-neighbours in $A$, then add all edges between $b$ and $A$. 
	Suppose now that $b$ has at least $\gamma|A|/2$ non-neighbours in $A$, and let $A_b$ be a suffix of $A$ such that $b$ has exactly $\gamma |A|/2$ non-neighbours in $A_b$. Observe that if $a \in A \setminus A_b$ is a neighbour of $b$ and $a' \in A_b$ is a non-neighbour of $b$, then $a,a',b$ span an induced $D$. So, letting $d_b := |N_{A \setminus A_b}(b)|$, we see that $b$ participates in at least 
	$d_b \cdot \gamma |A|/2$ induced copies of $D$ of this form. Summing over all $b \in B$, we get $\gamma|A|/2 \cdot \sum_{b \in B}{d_b}$ induced copies of $D$. Hence, by assumption, $\gamma|A|/2 \cdot \sum_{b \in B}{d_b} \leq \gamma^2|A|^2|B|/4$ and so $\sum_{b \in B}{d_b} \leq \gamma |A||B|/2$. For each $b \in B$, add all edges between $b$ and $A_b$ and delete all edges between $b$ and $A \setminus A_b$. This way we make at most $\gamma|A|/2 + d_b$ edge changes for each $b \in B$, resulting in a total of at most $\gamma|A||B|/2 + \sum_{b \in B}{d_b} \leq \gamma|A||B|$ edge changes.
	By construction, after these changes  $N_A(b)$ is a suffix of $A$ for every $b \in B$. Hence, $G[A \cup B]$ is induced $D$-free by Lemma \ref{lem:charac_simple}.
\end{proof}
\begin{lemma}\label{lem:removal clique_pair general}
	Let $G$ be an ordered graph and let $A,B \subseteq V(G)$ be disjoint cliques. Let $\gamma > 0$ and suppose that $|A|,|B| \geq 8/\gamma$. If $G[A \cup B]$ contains at most $\frac{\gamma^2}{64} |A||B| \min\{|A|,|B|\}$ induced copies of $D$, then $G[A \cup B]$ can be made induced $D$-free by adding/deleting at most $\gamma |A||B|$ edges between $A$ and $B$.
\end{lemma}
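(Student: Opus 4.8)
\textbf{Proof plan for Lemma \ref{lem:removal clique_pair general}.}
The strategy is to reduce the general case to the ``ordered'' case handled by Lemma \ref{lem:removal clique_pair simple}, using the structural decomposition of Lemma \ref{lem:charac_general} as a guide. Recall that Lemma \ref{lem:charac_general} says that an induced-$D$-free union of two cliques $A\cup B$ splits along a partition $V(G)=I\cup J$ into intervals with $I<J$, where the $I$-part is edgeless between $A\cap I$ and $B\cap I$, the $J$-part is complete between $A\cap J$ and $B\cap J$, and each of the two cross-bipartite graphs $(A\cap I)$–$(B\cap J)$ and $(B\cap I)$–$(A\cap J)$ has the suffix-neighbourhood property. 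So the plan is: first choose a good splitting point, then fix up each of the three resulting bipartite pieces separately, using Lemma \ref{lem:removal clique_pair simple} on the two ``simple'' pieces and brute force on the remaining one, keeping a running count of edge changes and of how many induced $D$'s each wrong choice is forced to create.

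Concretely, I would first find an interval partition $V(G)=I\cup J$, $I<J$, that is ``cheap'' in the following averaging sense: over all choices of the splitting point, the number of edges between $A\cap J$ and $B\cap J$ that are missing, plus the number of edges between $A\cap I$ and $B\cap I$ that are present, should be small on average — or, better, I would pick the split to make the number of induced $D$'s that straddle it small. A clean way: for each candidate cut position $t$, let $f(t)$ count the non-edges inside $(A\cup B)\cap J_t$ together with the edges inside $(A\cup B)\cap I_t$ across the $A$-$B$ bipartition; since $G[A\cup B]$ has few induced $D$'s, most cuts $t$ have $f(t)$ small (a wrong pair on the ``wrong side'' of the cut, together with an appropriate third vertex guaranteed by a near-complete or near-empty side, produces an induced $D$, as in the proof of Lemma \ref{lem:charac_general}). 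Fix such a good cut. Now make $(A\cup B)\cap J$ into a complete bipartite graph by adding its $\le f(t)$ missing edges, and make $(A\cup B)\cap I$ into an empty bipartite graph by deleting its $\le f(t)$ present edges; both costs are controlled by the smallness of the $D$-count, and each is $\le \tfrac{\gamma}{4}|A||B|$ say.

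After these two cleanups, what remains is to fix the two cross pieces. The piece between $A\cap I$ and $B\cap J$ is exactly an instance of Lemma \ref{lem:removal clique_pair simple} with the two cliques $A\cap I < B\cap J$ (note $A\cap I < J \supseteq B\cap J$, so the ordering hypothesis holds); similarly for $B\cap I < A\cap J$. I need to check that each of these sub-instances has at most $\gamma'^2 |A'|^2|B'|/4$ induced $D$'s for an appropriate $\gamma'$; this follows because any induced $D$ in the sub-instance is also an induced $D$ in $G[A\cup B]$ (using that $A,B$ are cliques so the two clique vertices of the $D$ lie in one side), and $|A\cap I|,|B\cap J|\le |A|,|B|$. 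Choosing $\gamma' = \gamma/4$ and using $\min\{|A|,|B|\}$ as the relevant third factor in the hypothesis of the lemma gives that the number of induced $D$'s is within the budget of Lemma \ref{lem:removal clique_pair simple} applied to each cross piece, at a cost of at most $\tfrac{\gamma}{4}|A\cap I||B\cap J| \le \tfrac{\gamma}{4}|A||B|$ edges each. Summing the four contributions (two cleanups plus two applications of the simple lemma) gives at most $\gamma|A||B|$ edge changes, and by Lemma \ref{lem:charac_general} the resulting graph on $A\cup B$ is induced $D$-free, since it now satisfies conditions 1--4. The hypothesis $|A|,|B|\ge 8/\gamma$ is needed so that the suffix lengths $\gamma|A|/2$ etc.\ appearing inside Lemma \ref{lem:removal clique_pair simple} are at least $1$, i.e.\ the sub-instances are non-degenerate.

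The main obstacle is bookkeeping: one must be careful that the ``few induced $D$'s'' hypothesis of the general lemma is strong enough to simultaneously (i) guarantee a good cut and (ii) feed valid hypotheses into both invocations of Lemma \ref{lem:removal clique_pair simple}, all with the same global bound $\tfrac{\gamma^2}{64}|A||B|\min\{|A|,|B|\}$. The delicate point is that Lemma \ref{lem:removal clique_pair simple} wants a bound of the shape $\gamma'^2|A'|^2|B'|/4$, which is quadratic in the \emph{first} side, whereas the general hypothesis is symmetric up to the $\min$; so one should orient each cross piece so that its smaller side plays the role of $B'$, i.e.\ apply the simple lemma with the larger clique first, and then $|A'|^2|B'| \ge |A'||B'|\min\{|A|,|B|\}$ works out. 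I expect choosing the constants ($1/64$, the factor $\gamma/4$ in four places, the $8/\gamma$ lower bound) to be the only genuinely fiddly part; the conceptual content is entirely contained in Lemmas \ref{lem:charac_general} and \ref{lem:removal clique_pair simple}.
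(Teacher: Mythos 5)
Your overall plan coincides with the paper's proof: choose a cut $V(G)=I\cup J$, make $(A\cap J,B\cap J)$ complete and $(A\cap I,B\cap I)$ empty, and then handle the two cross pieces via Lemma \ref{lem:removal clique_pair simple}. The cut-selection step is fine in spirit (the paper takes $J$ to be the minimal suffix with $\bar{e}(A\cap J,B\cap J)\geq \gamma|A||B|/8$ and then shows $e(A\cap I,B\cap I)\leq \gamma|A||B|/8$ by pairing each edge of the $I$-part with each non-edge of the $J$-part to produce an induced $D$, counted at most $\max\{|A|,|B|\}$ times). But your treatment of the cross pieces has a genuine gap. You propose to apply Lemma \ref{lem:removal clique_pair simple} with a fixed $\gamma'=\gamma/4$ to the cliques $A'=A\cap I$ and $B'=B\cap J$, which requires the induced-$D$ count to be at most $(\gamma/4)^2|A\cap I|^2|B\cap J|/4$. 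The global hypothesis only supplies $\frac{\gamma^2}{64}|A||B|\min\{|A|,|B|\}$, and $|A\cap I|$ can be arbitrarily small compared to $\min\{|A|,|B|\}$, so the inequality $|A'|^2|B'|\geq |A'||B'|\min\{|A|,|B|\}$ you invoke simply fails. Your proposed remedy --- ``orient each cross piece so the larger clique comes first'' --- is not available: the first clique in Lemma \ref{lem:removal clique_pair simple} must be the one preceding the other in the vertex order, and for the piece $(A\cap I,B\cap J)$ that is forced to be $A\cap I$.

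The paper's resolution is to run Lemma \ref{lem:removal clique_pair simple} in contrapositive form with a data-dependent parameter: let $s$ be the number of edge changes actually needed to fix $G[(A\cap I)\cup(B\cap J)]$ and set $\beta:=s/(|A\cap I||B\cap J|)$; then the lemma forces at least $\beta^2|A\cap I|^2|B\cap J|/4=s^2/(4|B\cap J|)$ induced copies of $D$, so the factor $|A\cap I|^2$ cancels and the hypothesis yields $s\leq \gamma|A||B|/4$ directly. You need either this optimization trick or an explicit extra case (e.g.\ brute-force deletion of all edges in a cross piece whenever $|A\cap I|$ is below a suitable threshold) to close the argument; as written, the bookkeeping you flag as ``fiddly'' is in fact where the proof breaks.
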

\begin{proof}
	If $\bar{e}(A,B) \leq \gamma |A||B|$ then add all edges between $A$ and $B$ to make $G[A \cup B]$ a clique and hence induced $D$-free. Suppose now that $\bar{e}(A,B) > \gamma|A||B|$. Let $J$ be the minimal suffix of $V(G)$ with the property that $\bar{e}(A \cap J, B \cap J) \geq \gamma |A||B|/8$. By minimality, $$\bar{e}(A \cap J, B \cap J) \leq \gamma|A||B|/8 + \max\{|A|,|B|\} \leq \gamma |A||B|/4.$$ Let $I = V(G) \setminus J$. If $\{a_1,b_1\} \in E(A \cap I, B \cap I)$ and $\{a_2,b_2\} \in \bar{E}(A \cap J, B \cap J)$, then $G[\{a_1,a_2,b_1,b_2\}]$ contains an induced copy of $D$; this follows from Lemma \ref{lem:charac_general} applied to $\{a_1,a_2\},\{b_1,b_2\}$. Each induced copy of $D$ is counted at most $\max\{|A|,|B|\}$ times this way. Hence, $G$ contains at least 
	$$
	\frac{e(A \cap I, B \cap I) \cdot \bar{e}(A \cap J, B \cap J)}{\max\{|A|,|B|\}} \geq \frac{e(A \cap I, B \cap I) \cdot \gamma|A||B|}{8\max\{|A|,|B|\}}
	$$ 
	induced copies of $G$. Therefore, by our assumption on the number of induced copies of $D$, we \nolinebreak get $$e(A \cap I, B \cap I) \leq \gamma|A||B|/8.$$ Make the bipartite graph between $A \cap J$ and $B \cap J$ complete, and the bipartite graph between $A \cap I$ and $B \cap I$ empty. This requires at most $e(A \cap I, B \cap I) + \bar{e}(A \cap J, B \cap J) \leq \gamma|A||B|/2$ edge changes altogether. Let $s$ be the number of edge changes required to make $G[(A \cap I) \cup (B \cap J)]$ induced $D$-free. By Lemma \ref{lem:removal clique_pair simple}, applied to the two cliques $A \cap I< B \cap J$ and with parameter $$\beta := \frac{s}{|A \cap I||B \cap J|}$$ 
	instead of $\gamma$, there are at least 
	$$\frac{\beta^2}{4} \cdot |A \cap I|^2 \cdot |B \cap J| = \frac{s^2}{4|B \cap J|}$$ induced copies of $D$ in $G$. By assumption, $\frac{s^2}{4|B \cap J|} \leq \gamma^2 |A|^2|B|/64$, and hence $s \leq \gamma|A||B|/4$. Therefore, $G[(A \cap I) \cup (B \cap J)]$ can be made induced $D$-free by adding/deleting at most $\gamma|A||B|/4$ edges. Symmetrically, the same is true for $G[(B \cap I) \cup (A \cap J)]$. With these changes and the ones done in the previous step, the total number of edge additions/deletions is at most $\gamma|A||B|$. After the changes, items 1-4 of Lemma \ref{lem:charac_general} are satisfied, and hence $G[A \cup B]$ is induced $D$-free. 
\end{proof}
For an ordered set $V$ and disjoint subsets $A_1,\dots,A_k \subseteq V$, an {\em $(A_1,\dots,A_k)$-sequence} is a sequence $v_1 < v_2 < \dots < v_k$ of elements of $V$ with $v_i \in A_i$. The next lemma we need is a removal lemma for ordered sequences, which follows from the main result of \cite{RR}.

\begin{lemma}\label{lem:sequence_removal}
	Let $k \geq 1$, then there exists $c_{k}>0$ such that the following holds. Let $V$ be an ordered set of size $n$ and let $A_1,\dots,A_k \subseteq V$ be disjoint subsets. For $\varepsilon > 0$, if the number of $(A_1,\dots,A_k)$-sequences is at most $c_k \varepsilon^k n^k$, then there is a set $S \subseteq V$, $|S| \leq \varepsilon n$, which intersects every $(A_1,\dots,A_k)$-sequence.  
\end{lemma}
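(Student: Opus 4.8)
I would prove Lemma~\ref{lem:sequence_removal} directly, by induction on $k$, with $c_k := 2^{-(k(k+1)/2-1)}$, so that $c_1 = 1$ and $c_k = c_{k-1}/2^{k}$. (We may assume $\varepsilon \le 1$, since for $\varepsilon > 1$ the set $S = V$ already works.) The base case $k = 1$ is trivial: the $(A_1)$-sequences are precisely the elements of $A_1$, so by hypothesis $|A_1| \le c_1 \varepsilon n = \varepsilon n$ and $S = A_1$ suffices.

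For the inductive step, assume the lemma for $k-1$. For $a \in A_k$ let $g(a)$ be the number of $(A_1,\dots,A_{k-1})$-sequences $v_1 < \dots < v_{k-1}$ with $v_{k-1} < a$. Deleting the last vertex of an $(A_1,\dots,A_k)$-sequence gives a bijection between these sequences and the pairs consisting of an $(A_1,\dots,A_{k-1})$-sequence counted by $g(a)$ together with the element $a$, so $\sum_{a \in A_k} g(a)$ equals the number of $(A_1,\dots,A_k)$-sequences, hence is at most $c_k \varepsilon^k n^k$. Call $a \in A_k$ \emph{heavy} if $g(a) \ge 2^{-(k-1)} c_{k-1}\varepsilon^{k-1}n^{k-1}$, and let $T$ be the set of heavy elements; then
$$|T| \;\le\; \frac{c_k\varepsilon^k n^k}{2^{-(k-1)}c_{k-1}\varepsilon^{k-1}n^{k-1}} \;=\; \frac{2^{k-1}c_k}{c_{k-1}}\,\varepsilon n \;=\; \tfrac12\varepsilon n .$$
Every $(A_1,\dots,A_k)$-sequence whose last element lies in $T$ is met by $T$, so it remains to hit those whose last element lies in $A_k \setminus T$; if $A_k \setminus T = \emptyset$ we are done with $S = T$.

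Otherwise let $a^* = \max(A_k \setminus T)$, put $V' = \{v \in V : v < a^*\}$, $n' = |V'|$, and $A'_i = A_i \cap V'$ for $i < k$. If $v_1 < \dots < v_k$ is an $(A_1,\dots,A_k)$-sequence with $v_k \in A_k \setminus T$, then $v_{k-1} < v_k \le a^*$, so $v_1 < \dots < v_{k-1}$ is an $(A'_1,\dots,A'_{k-1})$-sequence; thus it is enough to find $S' \subseteq V'$ with $|S'| \le \tfrac12\varepsilon n$ meeting every $(A'_1,\dots,A'_{k-1})$-sequence and then take $S = T \cup S'$. The number of $(A'_1,\dots,A'_{k-1})$-sequences is exactly $g(a^*)$, which is at most $2^{-(k-1)} c_{k-1}\varepsilon^{k-1}n^{k-1}$ because $a^* \notin T$. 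If $n' < \tfrac12\varepsilon n$, take $S' = V'$. Otherwise set $\varepsilon' := \varepsilon n/(2n') \le 1$; then $c_{k-1}(\varepsilon')^{k-1}(n')^{k-1} = 2^{-(k-1)}c_{k-1}\varepsilon^{k-1}n^{k-1}$ is an upper bound for the number of $(A'_1,\dots,A'_{k-1})$-sequences, so the induction hypothesis applied to the ordered set $V'$ with parameter $\varepsilon'$ yields a set $S' \subseteq V'$ with $|S'| \le \varepsilon' n' = \tfrac12\varepsilon n$ meeting all of them. Hence $|S| \le |T| + |S'| \le \varepsilon n$, completing the induction.

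I do not expect a genuine conceptual obstacle here; the only points requiring care are the choice of the constants $c_k$ (tuned so that the "heavy" part $T$ has size at most $\tfrac12\varepsilon n$ at every level) and the rescaling $\varepsilon \mapsto \varepsilon' = \varepsilon n/(2n')$ when recursing on $V'$, which is what guarantees that the returned hitting set is small relative to the original $n$ rather than to $n'$. Alternatively, as indicated in the statement, the lemma can be deduced from the removal lemma for ordered sequences of \cite{RR}.
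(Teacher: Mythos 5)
Your proof is correct, but it takes a genuinely different route from the paper. The paper does not prove the lemma from scratch: it quotes the sampling result of \cite{RR} (if a uniform sample of $q=O(k/\varepsilon)$ elements contains no $(A_1,\dots,A_k)$-sequence with probability more than $1/3$, then an $\varepsilon n$-element hitting set exists) and derives the stated counting version by a one-line union bound, observing that $N$ sequences are all missed by the sample except with probability at most $(q/n)^kN$. You instead give a self-contained induction on $k$: split $A_k$ into ``heavy'' last vertices (few of them, by an averaging bound against the total count, thanks to the choice $c_k=c_{k-1}/2^k$) and the rest, put the heavy ones into $S$, and recurse on the prefix $V'$ below the largest non-heavy element with the rescaled parameter $\varepsilon'=\varepsilon n/(2n')$ so that the recursively obtained set is small relative to the original $n$. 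I checked the details --- the bijection giving $\sum_{a\in A_k}g(a)=N$, the bound $|T|\le \tfrac12\varepsilon n$, the fact that every surviving sequence truncates to an $(A_1',\dots,A_{k-1}')$-sequence counted by $g(a^*)<2^{-(k-1)}c_{k-1}\varepsilon^{k-1}n^{k-1}$, and the rescaling identity $c_{k-1}(\varepsilon')^{k-1}(n')^{k-1}=2^{-(k-1)}c_{k-1}\varepsilon^{k-1}n^{k-1}$ --- and they all hold. Your argument has the advantage of being elementary and of producing an explicit constant $c_k=2^{-(k(k+1)/2-1)}$; the paper's reduction is shorter but imports a black box, and the sampling form of \cite{RR} is what is actually needed elsewhere in the property-testing literature.
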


What was actually proved in \cite{RR} is that if a sample of $q = O(k/\varepsilon)$ elements of $V$, taken uniformly at random and independently, contains no $(A_1,\dots,A_k)$ sequence with probability larger than $\frac{1}{3}$, then there is a set $S \subseteq V$, $|S| \leq \varepsilon n$, which intersects every $(A_1,\dots,A_k)$-sequence. Observe that if the number of $(A_1,\dots,A_k)$-sequences is $N$, then the probability that such a sample contains an $(A_1,\dots,A_k)$-sequence is at most $(q/n)^k N$. Hence, if $N \leq c_k\varepsilon^kn^k$ then this probability is less than $2/3$ (with an appropriate choice of $c_k$), and Lemma \ref{lem:sequence_removal} follows.  

We now move on to the following lemma, which is one of the main ingredients in the proof of Theorem \ref{thm:P3}. 

\begin{lemma}\label{lem:interval_partition}
	Let $c=c_3/64$, where $c_3$ is the constant defined in Lemma \ref{lem:sequence_removal}. Let $G$ be an $n$-vertex ordered graph with a vertex-partition $V(G) = X_1 \cup \dots \cup X_m$ such that $X_1,\dots,X_m$ are cliques and $G[X_i \cup X_j]$ is induced $D$-free for all $i < j$. 
	Let $\gamma > 0$ and suppose that $G$ has at most 
	$c \gamma^6n^3/m^{15}$
	induced copies of $D$.
	Then there is a set $S \subseteq V(G)$, $|S| \leq \gamma n$, and a partition of $V(G)\setminus S$ into intervals $I_1,\dots,I_t$, $t \leq 2m^3$, such that the following holds: for every $1 \leq j \leq t$, $G[I_j]$ is a disjoint union of cliques, each of the form $I_j \cap(\bigcup_{i \in M}{X_i})$ for some $M \subseteq [m]$. 
\end{lemma}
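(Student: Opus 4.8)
The plan is to apply the sequence removal lemma (Lemma \ref{lem:sequence_removal}) repeatedly, once for each ordered triple of clique-indices, to find a small set $S$ whose removal destroys all "bad configurations'' within pairs of cliques. Concretely, for indices $i<j$, Lemma \ref{lem:charac_general} tells us that the induced-$D$-freeness of $G[X_i\cup X_j]$ is witnessed by a partition $V(G)=I^{(i,j)}\cup J^{(i,j)}$ into a prefix and a suffix; inside $I^{(i,j)}$ the bipartite graph between $X_i$ and $X_j$ is empty, and inside $J^{(i,j)}$ it is complete (items 1--2), while items 3--4 describe the "mixed'' behaviour of vertices straddling the boundary. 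The only obstruction to $I_j\cap(\bigcup_{i\in M}X_i)$ being a genuine union of cliques is the presence of a vertex $b$ in some $X_j$ whose neighbourhood $N_{X_i}(b)$ is a nontrivial suffix (not all or nothing) of $X_i\cap$(the relevant interval), i.e. $b$ together with one of its neighbours and one of its non-neighbours in $X_i$ forms an induced $D$. The idea is to kill every such $b$, for every pair $(i,j)$, by putting it into $S$ — but we can only afford $\gamma n$ vertices in $S$ total, so we need the hypothesis on the number of induced $D$'s to ensure that few vertices are "bad''.

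First I would fix a pair $i<j$ and the boundary point between $I^{(i,j)}$ and $J^{(i,j)}$, and look separately at the three regimes: both endpoints in $I^{(i,j)}$, both in $J^{(i,j)}$, and one on each side. In each regime a vertex $b$ that has both a neighbour and a non-neighbour in the relevant part of $X_i$ (with the correct order relations) is the apex of an induced $D$; more precisely, each such $b$, with $a\in X_i$ a neighbour and $a'\in X_i$ a non-neighbour appropriately ordered, gives a copy of $D$. So if many vertices are bad for the pair $(i,j)$, we get many induced $D$'s directly — but this only bounds the number of bad vertices if each bad $b$ comes with many witnesses $(a,a')$, which need not hold. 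The fix, as in Lemma \ref{lem:removal clique_pair simple}/\ref{lem:removal clique_pair general}, is to work with a threshold: the set of $b$ with, say, at least $\gamma n/(\text{poly}(m))$ "deviations'' from being a suffix is small (each contributing $\gtrsim(\gamma n/\text{poly}(m))$ copies of $D$), and we add all of those to $S$; the remaining $b$'s are "nearly good'', and one shows that after deleting a further small set of vertices (a prefix/suffix chunk of each $X_i$ of size the threshold), all remaining $b$'s have $N_{X_i}(b)$ equal to a clean prefix or suffix. Summing the sizes of these sets over all $O(m^3)$ choices of $(i,j)$ and boundary-type keeps $|S|\le\gamma n$ provided $m^{15}$ (or some fixed power of $m$) divides into the $D$-count bound — this is where the exponent $15$ and the factor $\gamma^6$ in the hypothesis get used, tracking through the repeated applications of Lemma \ref{lem:sequence_removal} (whose guarantee is in terms of $c_3\varepsilon^3n^3$, explaining $c=c_3/64$ and the cube).

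After removing $S$, I would take $I_1,\dots,I_t$ to be the common refinement of all the prefix/suffix partitions $\{I^{(i,j)},J^{(i,j)}\}$ over the $\binom{m}{2}$ pairs; since each contributes one cut point, the refinement has at most $\binom{m}{2}+1\le 2m^3$ intervals, giving the bound on $t$. Within a fixed $I_\ell$, for every pair $i<j$ the interval $I_\ell$ lies entirely in $I^{(i,j)}$ or entirely in $J^{(i,j)}$, so the bipartite graph between $X_i\cap I_\ell$ and $X_j\cap I_\ell$ is either empty or complete (using items 1--2 of Lemma \ref{lem:charac_general} and that after removing $S$ there are no leftover "mixed'' vertices straddling the cut that misbehave). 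Hence the relation "$X_i\cap I_\ell$ and $X_j\cap I_\ell$ are completely joined'' is — I would check — an equivalence-type relation on the indices, its classes being the sets $M$; the complete-join classes become the cliques of the claimed union-of-cliques structure on $G[I_\ell]$, and distinct classes have no edges between them. The main obstacle I anticipate is the bookkeeping in the middle step: one must handle the "straddling'' vertices near each cut point carefully — a vertex of $X_j$ lying in $J^{(i,j)}$ can still have a suffix-neighbourhood inside $X_i\cap I^{(i,j)}$ (item 3), and inside a refined interval $I_\ell$ adjacent to the cut this partial neighbourhood must be shown to become all-or-nothing after removing $S$ — and to make all the poly$(m)$ threshold choices consistent so that a single set $S$ of size $\gamma n$ works for every pair simultaneously. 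Everything else is a routine union bound and counting argument built on the already-established structural lemmas.
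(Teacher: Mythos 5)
Your proposal has the right opening move (pull the partitions $V(G)=I_{i,j}\cup J_{i,j}$ from Lemma \ref{lem:charac_general} for each pair and take their common refinement), but it misidentifies the obstruction in the second half, and the step you defer with ``I would check'' is exactly where the argument breaks.

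First, inside any interval $I'_\ell$ of the common refinement there are \emph{no} vertices with a nontrivial suffix-neighbourhood to clean up: since $I'_\ell\subseteq I_{i,j}$ or $I'_\ell\subseteq J_{i,j}$ for every pair, the bipartite graph between $X_i\cap I'_\ell$ and $X_j\cap I'_\ell$ is automatically complete or empty. So the whole threshold/``bad vertex $b$'' machinery you describe is aimed at a non-issue. The genuine obstruction is intransitivity among \emph{triples}: it can happen that $(X_{i_1}\cap I'_\ell,X_{i_2}\cap I'_\ell)$ and $(X_{i_1}\cap I'_\ell,X_{i_3}\cap I'_\ell)$ are complete while $(X_{i_2}\cap I'_\ell,X_{i_3}\cap I'_\ell)$ is empty. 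This produces an unordered path on three vertices, but that path is an induced copy of $D$ only when the $X_{i_1}$-vertex comes \emph{first} in the order; if every vertex of $X_{i_1}\cap I'_\ell$ lies after $X_{i_2}\cap I'_\ell$ and $X_{i_3}\cap I'_\ell$, the configuration is a copy of $D^{\leftarrow}$, not $D$, and the hypothesis gives you nothing. Hence ``completely joined'' need not be an equivalence relation on the coarse refinement, and your claimed union-of-cliques structure fails there without contradicting the $D$-count.

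The paper repairs this with two ingredients you are missing. (a) Inside each refined interval it runs the sequence removal lemma on $(X_{i_1},X_{i_2},X_{i_3})$-sequences for every ordered triple, deleting $\frac{\gamma}{2m^3}|I'_\ell|$ vertices per triple whenever the number of such sequences is positive but below the threshold $c_3(\frac{\gamma}{2m^3})^3|I'_\ell|^3$; afterwards every triple has either zero or at least $c\gamma^6 n^3/m^{15}$ increasing sequences. (b) It then refines each surviving interval further by the minimal subintervals spanning $X_i\cap I''_\ell$ for each $i$ (this is where the extra factor $2m+1$ and hence $t\le 2m^3$ comes from -- your count $\binom{m}{2}+1$ corresponds only to the coarse refinement). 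The minimality guarantees that if three cliques all meet a final interval $I_j$ and their join pattern is intransitive, then there exists at least one increasing $(X_{i_1},X_{i_2},X_{i_3})$-sequence in $I''_\ell$, hence by (a) at least $c\gamma^6 n^3/m^{15}$ of them, each of which is an induced $D$ -- contradicting the hypothesis. Without both ingredients there is no route from ``one bad triple'' to ``many induced copies of $D$,'' and the lemma's conclusion does not follow.
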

\begin{proof}
	For each $1 \leq i < j \leq m$, Lemma \ref{lem:charac_general} states that there is a partition $V(G) = I_{i,j} \cup J_{i,j}$ into intervals such that the bipartite graph between $X_i \cap I_{i,j}$ and $X_j \cap I_{i,j}$ is empty, and the bipartite graph between $X_i \cap J_{i,j}$ and $X_j \cap J_{i,j}$ is complete. Let $I'_1,\dots,I'_s$ be the common refinement of the partitions $\left\{ I_{i,j},J_{i,j} : 1 \leq i < j \leq m \right\}$. Then $s\leq \binom{m}{2}+1 \leq m^{2}/2$, as the right endpoints of the intervals $I_{i,j}$ together with the last vertex of $V(G)$ are the right endpoints of the intervals $I'_1,\dots,I'_{s}$.
	Observe that for each $1 \leq \ell \leq s$ and $1 \leq i < j \leq m$, the bipartite graph between $X_i \cap I'_{\ell}$ and $X_j \cap I'_{\ell}$ is either complete or empty, because $I'_{\ell} \subseteq I_{i,j}$ or $I'_{\ell} \subseteq J_{i,j}$.
	
	Fix any $1 \leq \ell \leq s$. If $|I'_{\ell}| < \gamma n/m^2$ then put all vertices of $I'_{\ell}$ into $S$. 
	This puts at most $s \cdot \gamma n/m^2 \leq \gamma n/2$ vertices in $S$ altogether. 
	Suppose now that $|I'_{\ell}| \geq \gamma n/m^2$. 
	We run the following process. If there are distinct $1 \leq i_1,i_2,i_3 \leq m$ such that the number of $(X_{i_1},X_{i_2},X_{i_3})$-sequences inside $I'_{\ell}$ is at least $1$ and at most 
    $c_3 (\frac{\gamma}{2m^3})^3 |I'_{\ell}|^3$, then use Lemma \ref{lem:sequence_removal} to delete $\frac{\gamma}{2m^3} \cdot|I'_{\ell}|$ vertices from $I'_{\ell}$ and thus destroy all $(X_{i_1},X_{i_2},X_{i_3})$-sequences in $I'_{\ell}$. Add the deleted vertices to $S$ and update $I'_{\ell}$. Let $I''_{\ell} \subseteq I'_{\ell}$ be the interval at the end of the process. 
    Note that each triple $i_1,i_2,i_3$ can only cause vertex-deletion once (because following this vertex-deletion, there are no more $(X_{i_1},X_{i_2},X_{i_3})$-sequences in $I'_{\ell}$). 
    Hence, the total number of deleted vertices is at most $m^3 \cdot \frac{\gamma}{2m^3} \cdot|I'_{\ell}| = \frac{\gamma}{2}|I'_{\ell}|$. So $|I''_{\ell}| \geq (1 - \frac{\gamma}{2})|I'_{\ell}| \geq |I'_{\ell}|/2 \geq \gamma n/(2m^2)$. 
    Doing this step for every $1 \leq i \leq \ell$ adds a total of at most $\gamma/2 \cdot \sum_{\ell = 1}^s {|I'_{\ell}|} \leq \gamma n/2$ vertices to $S$. Thus, the total number of vertices in $S$ at this point is at most  $\gamma n$. 
    
    Observe that after this step, for every triple of distinct $1 \leq i_1,i_2,i_3 \leq m$, either there are no $(X_{i_1},X_{i_2},X_{i_3})$-sequences inside $I''_{\ell}$, or the number of these sequences is at least 
	$$
 	c_3 \left(\frac{\gamma}{2m^3}\right)^3 |I''_{\ell}|^3 \geq 
 	c_3 \left(\frac{\gamma}{2m^3}\right)^3 \cdot \left(\frac{\gamma n}{2m^2}\right)^3 = 
 	c \gamma^6 n^3/m^{15}.
 	$$ 
	
	For each $1 \leq i \leq m$, let $J^{\ell}_i$ be the minimal subinterval of $I''_{\ell}$ which contains the set $X_i \cap I''_{\ell}$. 
	By minimality, the first and last elements of $J^{\ell}_i$ belong to $X_i$. 
	Now take the common refinement of the intervals $J^{\ell}_1,\dots,J^{\ell}_m$, giving a partition of $I''_{\ell}$ into at most $2m+1$ intervals. Doing this for every $1 \leq \ell \leq s$, we get a partition $I_1,\dots,I_t$ of $V(G) \setminus S$ into $t \leq s \cdot (2m+1) \leq m^2/2 \cdot (2m+1) \leq 2m^3$ intervals. 
	For $1 \leq \ell \leq s$ and $1 \leq j \leq t$ with $I_j \subseteq I''_{\ell}$, observe that if $X_i \cap I_j \neq \emptyset$, then $I_j \subseteq J^{\ell}_i$, which means that there is an element of $X_i \cap I''_{\ell}$ which is smaller or equal to the first element of $I_j$ (indeed, the first element of $J^{\ell}_i$ satisfies this), as well as an element of $X_i \cap I''_{\ell}$ which is bigger or equal to the last element of $I_j$ (indeed, the last element of $J^{\ell}_i$ satisfies this). 
	
	Let us show that $I_1,\dots,I_t$ have the property stated in the lemma. Fix any $1 \leq j \leq t$. By construction, there is $1 \leq \ell \leq s$ such that $I_j$ is a subinterval of $I''_{\ell}$. Recall that for all $1 \leq i_1 \neq i_2 \leq m$, the bipartite graph between $X_{i_1} \cap I''_{\ell}$ and $X_{i_2} \cap I''_{\ell}$ is either complete or empty.
	
	If $G[I_j]$ is not a disjoint union of cliques then it contains an (unordered) path on 3 vertices. Since $X_1,\dots,X_m$ are cliques and all bipartite graphs $X_{i_1} \cap I''_{\ell}$ and $X_{i_2} \cap I''_{\ell}$ are complete or empty, this path cannot contain two vertices from the same clique. Hence, there are
    distinct $1 \leq i_1,i_2,i_3 \leq m$ such that $X_{i_1},X_{i_2},X_{i_3}$ all intersect $I_j$, the bipartite graphs 
	$(X_{i_1} \cap I''_{\ell}, X_{i_2} \cap I''_{\ell})$ and $(X_{i_1} \cap I''_{\ell}, X_{i_3} \cap I''_{\ell})$ are complete, and the bipartite graph $(X_{i_2} \cap I''_{\ell}, X_{i_3} \cap I''_{\ell})$ is empty. Since $X_{i_1}$ intersects $I_j$, there exists $x_{i_1} \in X_{i_1} \cap I''_{\ell}$ such that $x_{i_1}$ is smaller or equal to the first element of $I_j$. Similarly, since $X_{i_2}$ and $X_{i_3}$ intersect $I_j$, there exist $x_{i_2} \in X_{i_2} \cap I''_{\ell}$ and $x_{i_3} \in X_{i_3} \cap I''_{\ell}$ such that $x_{i_2},x_{i_3}$ are bigger or equal to the last element of $I_j$. Without loss of generality, suppose that $x_{i_2} < x_{i_3}$. Then $(x_{i_1},x_{i_2},x_{i_3})$ is a $(X_{i_1},X_{i_2},X_{i_3})$-sequence contained in $I''_{\ell}$. By construction, there are at least $c \gamma^6 n^3/m^{15}$ such sequences. Now observe that each such sequence spans an induced copy of $D$, contradicting the assumption of the lemma. This completes the proof.  
\end{proof}
In the next lemma we prove a $D$-removal lemma for graphs which can be partitioned into two intervals, each of which induces a disjoint union of cliques. An important feature is that edge changes are only made between the intervals, not inside them, so that each interval remains a disjoint union of cliques after the changes.  
\begin{lemma}\label{lem:fixing_pairs}
	Let $G$ be an ordered graph, let $I,J \subseteq V(G)$ be disjoint intervals with $I < J$, and suppose that each of the graphs $G[I],G[J]$ is the disjoint union of at most $m$ cliques. Let $\gamma > 0$, and suppose that $G[I \cup J]$ contains at most $\frac{\gamma^{15}}{2^{40}m^9}|I||J| \min\{|I|,|J|\}$ induced copies of $D$. Then $G[I \cup J]$ can be made induced $D$-free by adding/deleting at most $\gamma|I||J|$ edges between $I$ and \nolinebreak $J$. 
\end{lemma}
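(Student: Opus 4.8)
The plan is as follows. We may assume $\gamma<1$ (otherwise one can delete all edges between $I$ and $J$). Write $G[I]=A_1\cup\dots\cup A_p$ and $G[J]=B_1\cup\dots\cup B_q$ as disjoint unions of cliques, with $p,q\le m$. The first step is a structural description of induced $D$-free graphs of this shape. Since $I<J$ and each of $G[I],G[J]$ is a disjoint union of cliques, every induced copy of $D$ in $G[I\cup J]$ is of one of two types: (type 1) a copy $x<u<w$ with $x,u$ in a common clique $A_i$, $w\in J$, $\{x,w\}\in E(G)$, $\{u,w\}\notin E(G)$; or (type 2) a copy $x<y<z$ with $x\in I$ and $y,z$ in distinct cliques of $J$, $\{x,y\},\{x,z\}\in E(G)$. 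In particular the apex of every induced copy of $D$ lies in $I$, and every copy lies inside $G[A_i\cup J]$ for the clique $A_i$ containing its apex. One checks that $G[I\cup J]$ is induced $D$-free if and only if (i) $N_{A_i}(w)$ is a suffix of $A_i$ for every $w\in J$ and every $i$ (equivalently, $G[A_i\cup B_j]$ is induced $D$-free for all $i,j$, by Lemma~\ref{lem:charac_simple}), and (ii) $N_J(x)$ is contained in a single clique of $J$ for every $x\in I$. Moreover (i) and (ii) together force each clique $A_i$ to send all of its edges into $J$ into a single clique of $J$: by (i), $\max A_i$ is adjacent to every $w\in J$ with $N_{A_i}(w)\neq\emptyset$, so (ii) applied to $\max A_i$ puts all such $w$ in one clique.

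The second step is a reduction to a single clique. By the above, the induced $D$-free graphs on $I\cup J$ with the given clique structures are exactly those obtained by choosing, for each $i$, a clique $B_{\phi(i)}$ of $J$ (or ``nothing''), deleting all edges from $A_i$ to $J\setminus B_{\phi(i)}$, and turning the bipartite graph between $A_i$ and $B_{\phi(i)}$ into a \emph{staircase} (each $w\in B_{\phi(i)}$ has $N_{A_i}(w)$ a suffix of $A_i$). These choices do not interact across different $i$, and the edge-sets $A_i\times J$ are pairwise disjoint; hence the minimum number of edge changes between $I$ and $J$ making $G[I\cup J]$ induced $D$-free equals $\sum_{i=1}^{p}\mathrm{OPT}_i$, where $\mathrm{OPT}_i$ is the minimum number of changes between $A_i$ and $J$ that make $G[A_i\cup J]$ induced $D$-free. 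So it suffices to show $\sum_i\mathrm{OPT}_i\le\gamma|I||J|$, using that $\sum_i D_i\le\varepsilon_0|I||J|\min\{|I|,|J|\}$, where $D_i$ is the number of induced copies of $D$ with apex in $A_i$ and $\varepsilon_0:=\gamma^{15}/(2^{40}m^9)$.

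The third and main step bounds $\mathrm{OPT}_i$. Fix $i$ and set $a^*:=\max A_i$. If $|A_i|\le\tfrac{\gamma}{3m}|I|$ or $e(A_i,J)\le\tfrac{\gamma}{6}|A_i||J|$, delete all edges from $A_i$ to $J$; over all such $i$ this costs at most $\tfrac{\gamma}{3}|I||J|+\tfrac{\gamma}{6}|I||J|$, and these deletions create no new copies of $D$. For the remaining $i$, let $B_{j^*}$ be the clique of $J$ with $e(A_i,B_{j^*})$ largest and take $\phi(i)=j^*$; then $\mathrm{OPT}_i\le\big(\sum_{j\neq j^*}e(A_i,B_j)\big)+r_i$, where $r_i$ is the number of $A_i$--$B_{j^*}$ changes needed to make that pair a staircase. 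Since the number of induced copies of $D$ inside $G[A_i\cup B_{j^*}]$ is at most $D_i$, Lemma~\ref{lem:removal clique_pair simple} (applied with parameter $2\sqrt{D_i}/(|A_i|\sqrt{|B_{j^*}|})$) gives $r_i\le 2\sqrt{D_i|B_{j^*}|}\le 2\sqrt{D_i|J|}$, so $\sum_i r_i\le 2\sqrt{\,m\,|J|\sum_i D_i\,}$ by Cauchy--Schwarz, which is $\le\tfrac{\gamma}{6}|I||J|$ after substituting $\varepsilon_0$. The remaining term $\sum_{j\neq j^*}e(A_i,B_j)$ is estimated by charging each edge $(x,w)$ with $w\notin B_{j^*}$ to induced copies of $D$: if $a^*\not\sim w$ then $(x,a^*,w)$ is a type-1 copy; if $x$ has a neighbour in $B_{j^*}$ then $w$ with such a neighbour gives a type-2 copy; and the ``hard'' remaining edges (where $a^*\sim w$ and $N_J(x)$ lies in $w$'s clique) are bounded using that $a^*$ cannot have many neighbours outside $B_{j^*}$ (each, paired with a $B_{j^*}$-neighbour of $a^*$, yields a type-2 copy) together with the fact that $e(A_i,B_{j^*})$ large forces $a^*$, or a long top-suffix of $A_i$, to have many neighbours in $B_{j^*}$ (else there are $\Omega(e(A_i,B_{j^*}))$ type-1 copies through $a^*$). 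Cliques $A_i$ for which $D_i$ is too large for these bounds to help are instead treated by the trivial estimate $\mathrm{OPT}_i\le e(A_i,J)\le|A_i||J|$, and Cauchy--Schwarz together with $\sum_i D_i\le\varepsilon_0|I||J|\min\{|I|,|J|\}$ shows that these collectively cost $\le\tfrac{\gamma}{6}|I||J|$. Summing all contributions yields $\sum_i\mathrm{OPT}_i\le\gamma|I||J|$, which is precisely why the hypothesis carries the threshold $\varepsilon_0=\gamma^{15}/(2^{40}m^9)$.

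The hard part will be the estimate on $\sum_{j\neq j^*}e(A_i,B_j)$, i.e.\ on the number of cross-edges from $A_i$ to non-majority cliques of $J$. The two easy cases charge cleanly to copies of $D$, but the ``hard'' residual case (edge ``blocked'' by $a^*$, with the $I$-endpoint's $J$-neighbourhood confined to a single wrong clique) directly yields only an induced path on three vertices whose degree-$2$ vertex is in the \emph{middle}, not a copy of $D$; ruling out many such configurations requires exploiting simultaneously how $a^*$ behaves towards $B_{j^*}$ and towards the other cliques, and keeping precise track of the dependence on $|I|,|J|,m,\gamma$ (in particular quarantining the cliques with anomalously many copies of $D$) so that the final summation closes.
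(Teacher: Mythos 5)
Your structural characterization is correct (every induced copy of $D$ in $G[I\cup J]$ has its apex in $I$; $D$-freeness is equivalent to each $N_{A_i}(w)$ being a suffix of $A_i$ together with each $N_J(x)$ lying in a single clique of $J$; hence each $A_i$ must attach to a single clique of $J$ via a staircase), and so are the decomposition $\mathrm{OPT}=\sum_i\mathrm{OPT}_i$, the disposal of small/sparse cliques, and the bound $\sum_i r_i\le 2\sqrt{m|J|\sum_i D_i}$ via Lemma \ref{lem:removal clique_pair simple} and Cauchy--Schwarz. The gap is exactly the estimate you yourself defer, $\sum_i\sum_{j\ne j^*}e(A_i,B_j)=O(\gamma)|I||J|$, and it is not mere bookkeeping. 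The hypothesis allows up to $\varepsilon_0|I||J|\min\{|I|,|J|\}$ copies of $D$ while the conclusion permits only $\gamma|I||J|$ changes, so any charging argument must associate to each deleted edge at least $\Omega_{\gamma,m}(\min\{|I|,|J|\})$ distinct copies of $D$. Your two ``easy'' charges yield in the worst case a single copy per edge (the copy $(x,a^*,w)$, respectively one type-2 copy through one $B_{j^*}$-neighbour of $x$), which is short by a factor of $\min\{|I|,|J|\}$. The dichotomy you sketch for the hard residual case does not close either: in the branch where $a^*$ has at least $\frac{\gamma}{12m}|J|$ neighbours in $B_{j^*}$, bounding the hard edges from $A_i$ by $|A_i|\cdot|N_J(a^*)\setminus B_{j^*}|\le |A_i|\cdot\frac{12m D_i}{\gamma|J|}$ and summing over $i$ gives at best $\frac{12m\varepsilon_0}{\gamma}|I|^2\min\{|I|,|J|\}$, which exceeds $\gamma|I||J|$ by an unbounded factor as $|I|,|J|\to\infty$.

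The missing idea --- and the way the paper manufactures the required $\min\{|I|,|J|\}$ multiplicity --- is to impose the staircase structure on \emph{every} pair $(A_i,B_j)$ with both sides large \emph{first}, at negligible cost $\delta|I||J|$ with $\delta=\gamma^6/(2^{16}m^3)$, and then delete the sparse or unbalanced pairs. If a copy $a,b,b'$ with $b\in B_j$, $b'\in B_{j'}$, $j\ne j'$, survives, then both pairs $(A_i,B_j)$ and $(A_i,B_{j'})$ are large and dense, and the nestedness of the neighbourhoods $N_{B_j}(\cdot)$ and $N_{B_{j'}}(\cdot)$ along $A_i$ forces the top $\gamma|A_i|/8$ vertices of $A_i$ to each have at least $\gamma|B_j|/8$ neighbours in $B_j$ and $\gamma|B_{j'}|/8$ in $B_{j'}$, producing $\Omega(\gamma^6|I||J|^2/m^3)$ type-2 copies --- enough to contradict the hypothesis even after discounting the copies that the initial staircase modifications may have created. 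Without first making the neighbourhoods nested, density of $A_i$ into two cliques of $J$ need not concentrate on common vertices of $A_i$, and the multiplicity you need is simply not available from the configurations you list. As written, the proposal therefore does not prove the lemma.
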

\begin{proof}
	Suppose that $G[I]$ is the disjoint union of cliques $A_1,\dots,A_k$ and $G[J]$ is the disjoint union of cliques $B_1,\dots,B_{\ell}$. By assumption, $k,\ell \leq m$. We will make $G[I \cup J]$ induced $D$-free in two steps.
	\paragraph{Step 1.} 
	Set 
	$$
	\delta := \frac{\gamma^6}{2^{16}m^3}.
	$$
	Fix $i \in [k]$ and $j \in [\ell]$ with $|A_i| \geq \frac{\gamma |I|}{4m}$ and $|B_j| \geq \frac{\gamma |J|}{4m}$. 
	Then $\delta^2|A_i|^2|B_j|/4 \geq \frac{\gamma^{15}}{2^{40}m^9}|I|^2|J|$.
	By assumption, the number of induced copies of $D$ in $G[A_i \cup B_j]$ is at most $\delta^2|A_i|^2|B_j|/4$. Apply Lemma \ref{lem:removal clique_pair simple} to make $G[A_i \cup B_j]$ induced $D$-free with at most $\delta|A_i||B_j|$ edge changes. By Lemma \ref{lem:charac_simple}, the neighbourhood in $A_i$ of each vertex in $B_j$ is now a suffix of $A_i$. Performing step 1 for all pairs $i \in [k], j \in [\ell]$ requires at most $\delta|I||J|$ edge changes altogether. 
	
	\paragraph{Step 2.} For every $i \in [k]$ and $j \in [\ell]$, delete all edges between $A_i$ and $B_j$ if either $e(A_i,B_j) \leq \gamma|A_i||B_j|/4$ or 
	$|A_i| \leq \frac{\gamma |I|}{4m}$ or $|B_j| \leq \frac{\gamma |J|}{4m}$. This step requires at most $\gamma |I||J|/4 + 2m \cdot \frac{\gamma |I||J|}{4m} = 3\gamma|I||J|/4$ edge changes overall. So together with step 1, the total number of edge changes is at most $\gamma|I||J|$. 
	
	\vspace{0.2cm}
	We claim that after step 2, $G[I \cup J]$ is induced $D$-free. Suppose that this is not the case. Note that $G[I]$ and $G[J]$ are induced $D$-free because each is a disjoint union of cliques. If an induced copy of $D$ has two vertices in $I$ and one in $J$, then it must be contained in $A_i \cup B_j$ for some $i,j$, since $A_1,\dots,A_k$ are cliques with no edges between them. However, in steps 1 and 2 we made sure that $G[A_i \cup B_j]$ is induced $D$-free for all $i,j$, so this is impossible. Hence, the induced $D$-copy must be of the form $a,b,b'$ with $a \in A_i$, $b \in B_j$ and $b' \in B_{j'}$ for some $i \in [k]$, $j,j' \in [\ell]$. Since $a$ is adjacent to $b$ and $b'$, we did not make any edge changes in the bipartite graphs $(A_i,B_j)$ and $(A_i,B_{j'})$ in step 2. Hence,
	it must be the case that
	\begin{equation}\label{eq:A_i, B_j, B_j' vertices}
	|A_i| \geq \frac{\gamma |I|}{4m}, \; \; |B_j|,|B_{j'}| \geq \frac{\gamma |J|}{4m}
	\end{equation}
	and
	\begin{equation}\label{eq:A_i, B_j, B_j' edges}
	e(A_i,B_j) \geq \frac{\gamma|A_i||B_j|}{4}, \; \; e(A_i,B_{j'}) \geq \frac{\gamma|A_i||B_{j'}|}{4}.
	\end{equation}
	Since $b$ and $b'$ are not adjacent, $j \neq j'$. There are no edges between $B_j$ and $B_{j'}$. 
	Recall that the neighbourhood in $A_i$ of each vertex in $B_j \cup B_{j'}$ is a suffix of $A_i$. This means that if $a_1,a_2 \in A_i$ and $a_1 < a_2$, then $N_{B_j}(a_1) \subseteq N_{B_j}(a_2)$ and $N_{B_{j'}}(a_1) \subseteq N_{B_{j'}}(a_2)$. Let $A'$ be the set of the last $\gamma|A_i|/8$ elements of $A_i$. 
	Due to \eqref{eq:A_i, B_j, B_j' edges}, 
	there exist at least $\gamma |A_i|/8$ vertices $a' \in A$ with $|N_{B_j}(a')| \geq \gamma|B_j|/8$. Since $N_{B_j}(a_1) \subseteq N_{B_j}(a_2)$ for $a_1 < a_2$, we must have that $|N_{B_{j}}(a')| \geq \gamma|B_j|/8$ for every $a' \in A'$. Similarly, $|N_{B_{j'}}(a')| \geq \gamma|B_{j'}|/8$ for every $a' \in A'$. 
%
	Every $a' \in A$ forms an induced copy of $D$ with every pair in $N_{B_{j}}(a') \times N_{B_{j'}}(a')$.
	This gives a total of at least
	$$
	|A'| \cdot \frac{\gamma|B_j|}{8} \cdot \frac{\gamma|B_{j'}|}{8} 
	= \frac{\gamma^3}{2^9} \cdot |A_i| \cdot |B_j| \cdot |B_{j'}|
	\geq \frac{\gamma^3}{2^9} \cdot \frac{\gamma |I|}{4m} \cdot \left( \frac{\gamma |J|}{4m} \right)^2 = \frac{\gamma^6}{2^{15}m^3}|I||J|^2 = 2\delta|I||J|^2
	$$
	induced copies of $D$, each having one vertex in $I$ and two in $J$. The at most $\delta|I||J|$ edges we added/deleted in step 1 can participate in at most $\delta|I||J|^2$ of these copies. Hence, at least $\delta|I||J|^2$ must be present in the original graph, a contradiction to the assumption of the lemma.  
\end{proof}

In the following lemma we show that under certain conditions, if there are few induced $D$-copies of a certain type, then they can be destroyed by deleting few vertices (rather than by adding/deleting \nolinebreak edges). This is useful because vertex deletion, as opposed to edge changes, cannot create new induced copies.  

\begin{lemma}\label{lem:three_cliques vertex_removal}
	Let $G$ be an ordered graph and let $A,B,C \subseteq V(G)$ be disjoint cliques with $A < B < C$, whose union is $V(G)$. Suppose that $G[A \cup B]$, $G[A \cup C]$ and $G[B \cup C]$ are induced $D$-free. For $s \geq 1$, if $G[A \cup B \cup C]$ has at most $s^3/12$ induced copies of $D$, then there is a set $S$ of at most $3s$ vertices whose deletion destroys every induced copy of $D$ in $G[A \cup B \cup C]$.
\end{lemma}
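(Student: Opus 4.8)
The plan is to first turn the statement into a purely combinatorial claim about a tripartite structure, and then exploit the suffix structure supplied by Lemma~\ref{lem:charac_simple}. Since $A,B,C$ are cliques and each of $G[A\cup B],G[A\cup C],G[B\cup C]$ is induced $D$-free, every induced copy of $D$ in $G$ uses exactly one vertex from each of $A,B,C$: a copy with two vertices in one part would lie inside one of the three pairwise unions, and a copy inside a single part is impossible since the parts are cliques. Hence the induced copies of $D$ are exactly the \emph{bad triples} $(a,b,c)\in A\times B\times C$ with $ab,ac\in E(G)$ and $bc\notin E(G)$; write $T$ for the family of these, so $|T|\le s^{3}/12$, and the goal is to find $S$ with $|S|\le 3s$ meeting every member of $T$. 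Applying Lemma~\ref{lem:charac_simple} to the ordered pairs of cliques $A<B$, $A<C$ and $B<C$ gives: $N_A(b)$ is a suffix of $A$ for every $b\in B$; $N_A(c)$ is a suffix of $A$ for every $c\in C$; and $N_B(c)$ is a suffix of $B$ for every $c\in C$.

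The next step is to peel off a suffix of $A$. If $|A|\le s$, deleting all of $A$ already works, so assume $|A|>s$ and let $S_A$ consist of the last $\lfloor s\rfloor$ vertices of $A$. After deleting $S_A$, any surviving bad triple $(a,b,c)$ has $a\notin S_A$; since $a\in N_A(b)\cap N_A(c)$ and both sets are suffixes of $A$, this forces $|N_A(b)|,|N_A(c)|>s$. Let $B^{+}=\{b\in B:|N_A(b)|>s\}$ and $C^{+}=\{c\in C:|N_A(c)|>s\}$, and let $R=\{(b,c)\in B^{+}\times C^{+}:bc\notin E(G)\}$ be the set of surviving \emph{bad pairs}. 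For a fixed $(b,c)\in R$, the number of $a$ with $(a,b,c)\in T$ equals $|N_A(b)\cap N_A(c)|=\min\{|N_A(b)|,|N_A(c)|\}>s$ (an intersection of two suffixes of $A$), and these triple-sets are pairwise disjoint over distinct $(b,c)$; hence $|R|\le |T|/(\lfloor s\rfloor+1)<s^{2}/12$. This factor-$s$ gain is the crucial point.

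It remains to cover $R$ using few vertices of $B\cup C$. The key structural observation is that $R$ is a \emph{staircase}: for fixed $c\in C^{+}$ the set $\{b\in B^{+}:(b,c)\in R\}$ equals $B^{+}\cap(B\setminus N_B(c))$, which is a prefix of $B^{+}$ (since $N_B(c)$ is a suffix of $B$), and as $c$ varies these prefixes are linearly ordered by inclusion. For such a bipartite graph with $m:=|R|$ edges, picking the threshold $t:=\lceil\sqrt{m}\,\rceil$ and letting $S_{BC}$ be the first $t$ vertices of $B^{+}$ together with every $c\in C^{+}$ lying in more than $t$ pairs of $R$ gives a cover of $R$ with $|S_{BC}|\le t+m/t\le 2\sqrt{m}+1$; with $m<s^{2}/12$ this is at most $2s$. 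Finally set $S:=S_A\cup S_{BC}$, so $|S|\le \lfloor s\rfloor+2s\le 3s$. This $S$ meets every bad triple: either it uses a vertex of $S_A$, or $b\in B^{+}$ and $c\in C^{+}$, whence $(b,c)\in R$ and one of $b,c$ lies in $S_{BC}$. Since deleting vertices never creates new induced copies, removing $S$ destroys every induced copy of $D$ in $G[A\cup B\cup C]$.

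The main obstacle is locating the right reduction: realizing that after removing a suffix of $A$ the remaining obstructions are governed by a single bipartite graph $R$, that the count of bad triples forces $|R|$ to be only of order $s^{2}$ (through the $\min\{|N_A(b)|,|N_A(c)|\}$ bound), and — most importantly — that the suffix structure makes $R$ a staircase, which admits a vertex cover of size $O(\sqrt{|R|})$ rather than the $O(|R|)$ one would get for a general bipartite graph. Verifying the staircase cover bound and bookkeeping the constant against the $3s$ budget is then routine.
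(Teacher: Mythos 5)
Your proof is correct, but it takes a genuinely different route from the paper's. The paper's argument is a packing-plus-counting argument: it takes a maximum collection of pairwise vertex-disjoint induced copies $(a_1,b_1,c_1),\dots,(a_r,b_r,c_r)$ with $a_i\in A$, $b_i\in B$, $c_i\in C$ and shows $r\le s$ directly (so that $S$ can be taken to be the $3r$ vertices of the packing). Ordering $a_1<\dots<a_r$, the suffix property from Lemma~\ref{lem:charac_simple} makes $a_i$ adjacent to every $b_j,c_j$ with $j\le i$, and since $G[B\cup C]$ is induced $D$-free it contains no induced $4$-cycle, so for each $j<k$ at least one of $\{b_j,c_k\},\{c_j,b_k\}$ is a non-edge; this produces $\sum_{i=1}^{r}\binom{i}{2}\ge r^3/12$ induced copies of $D$. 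You instead treat the three parts asymmetrically: you buy the $A$-coordinate by deleting a suffix of $A$ of size about $s$, use the resulting bound $|N_A(b)\cap N_A(c)|>s$ (an intersection of suffixes) to force the number of surviving bad pairs $(b,c)$ down to $m<s^2/12$, and then exploit the suffix property for the pair $B<C$ to see that the bad-pair bipartite graph is a staircase, which admits a vertex cover of size $O(\sqrt{m})=O(s)$. Both proofs use Lemma~\ref{lem:charac_simple} on all three pairs of cliques, but in different ways (the paper via the induced-$C_4$ exclusion in $G[B\cup C]$, you via prefix neighbourhoods); the paper's version is shorter, while yours isolates a reusable $O(\sqrt{m})$-cover fact for staircase bipartite graphs. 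Your constants check out: $\lfloor s\rfloor+2\sqrt{s^2/12}+1\le 3s$ for $s\ge 1$.
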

\begin{proof}
	Clearly, if $G$ contains a copy of $D$ with vertices $a<b<c$, then $a\in A, b\in B, c\in C$. Let $(a_1,b_1,c_1),\dots,(a_r,b_r,c_r)$ be a maximum collection of vertex-disjoint induced copies of $D$ with $a_i \in A$, $b_i \in B$, $c_i \in C$ ($i = 1,\dots,r$). It is enough to show that $r \leq s$, because then $S=\{a_{i},b_{i},c_{i}:i\in [r]\}$ suffices. Assume without loss of generality that $a_1 < \dots < a_r$. By Lemma \ref{lem:charac_simple}, the neighbourhood  of each vertex of $B \cup C$ in $A$ is a suffix of $A$ (as $G[A \cup B]$ and $G[A \cup C]$ are induced $D$-free). For each $1 \leq i \leq r$, $a_i$ is adjacent to $b_i$ and $c_i$. 	Hence, for each pair of indices $1 \leq j \leq i \leq r$, $a_i$ is adjacent to $b_j$ and $c_j$. 
	
	For each $1 \leq i \leq r$, $\{b_i,c_i\} \notin E(G)$. This implies that for every $1 \leq j < k \leq r$, at least one of the pairs $\{b_j,c_k\},\{c_j,b_k\}$ must be a non-edge. Indeed, otherwise $b_j,c_j,b_k,c_k$ span an induced 4-cycle, which must contain an induced copy of $D$, contradicting the assumption that $G[B \cup C]$ is induced $D$-free. We have shown that for every triple of indices $1 \leq j < k \leq i \leq r$, $a_i$ forms an induced copy of $D$ with one of the pairs $\{b_j,c_k\}$ or $\{c_j,b_k\}$. This gives a total of at least $\sum_{i=1}^r {\binom{i}{2}} = \binom{r+1}{3} \geq r^3/12$ induced copies of $D$. By the assumption of the lemma, $r \leq s$, as required.
\end{proof}

In the following lemma we prove a $D$-removal statement in the following setting. Suppose that $V(G) = X \cup Y$, $Y$ is independent, and $G[X]$ is induced $D$-free. Then one can efficiently destroy the remaining induced $D$-copies by deleting edges between $X$ and $Y$. 

\begin{lemma}\label{lem:P2 independet_set}
	Let $G$ be an ordered graph and let $V(G) = X \cup Y$ be a vertex partition such that $Y$ is an independent set and $G[X]$ is induced $D$-free. Let $\gamma > 0$ with 
    $|Y| \geq 4/\gamma$. 
	Suppose that $G$ has at most $\frac{\gamma^2}{32}|X||Y|\min\{|X|,|Y|\}$ induced copies of $D$. Then $G$ can be made induced $D$-free by deleting at most $\gamma |X||Y|$ edges between $X$ and $Y$.
\end{lemma}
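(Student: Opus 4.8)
The plan rests on the elementary fact -- used implicitly in Lemmas \ref{lem:charac_simple}--\ref{lem:three_cliques vertex_removal} -- that an ordered graph $H$ is induced $D$-free if and only if for every vertex $v$ the forward neighbourhood $N^{+}(v):=\{u>v:\{u,v\}\in E(H)\}$ spans a clique (equivalently, the vertex order is a perfect elimination order of $H$). Write $N^{+}_X(v)=N^{+}(v)\cap X$, $N^{+}_Y(v)=N^{+}(v)\cap Y$, and $d_x:=|N^{+}_Y(x)|$ for $x\in X$. Since $G[X]$ is induced $D$-free, $N^{+}_X(x)$ is a clique for every $x\in X$; since $Y$ is independent, $G$ is induced $D$-free if and only if (1) for each $x\in X$ we have $|N^{+}_Y(x)|\le 1$, and if $N^{+}_Y(x)=\{y\}$ then $y$ is adjacent to all of $N^{+}_X(x)$, and (2) for each $y\in Y$ the set $N^{+}_X(y)$ is a clique of $G[X]$. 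Classify the induced copies of $D$ in $G$ by the position of their first vertex (the ``apex''): those with apex in $Y$ have both other vertices in $X$ and correspond to non-edges inside some $N^{+}_X(y)$; those with apex in $X$ and both other vertices in $Y$ are counted precisely by $\sum_{x\in X}\binom{d_x}{2}$.

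I would make $G$ induced $D$-free by a single sweep through $V(G)$ in strictly decreasing order, ``fixing'' each vertex $v$ as it is reached, deleting only $X$--$Y$ edges. If $v=x\in X$: keep the edge from $x$ to one vertex of $N^{+}_Y(x)$ that is, at that moment, adjacent to all of $N^{+}_X(x)$, if such a vertex (an \emph{admissible endpoint}) exists, and delete all other edges from $x$ into $N^{+}_Y(x)$. If $v=y\in Y$: delete the edges from $y$ to $N^{+}_X(y)\setminus K$, where $K$ is a maximum clique of $G[N^{+}_X(y)]$. The procedure is monotone because deleting an edge $\{v,u\}$ with $v<u$ removes $u$ from $N^{+}(v)$ and leaves $N^{+}(w)$ untouched for every $w>v$; hence fixing $v$ cannot undo the fixing of a larger vertex, and at the end (1) and (2) hold everywhere, so $G$ is induced $D$-free. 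Using the same observation one checks that when $x\in X$ is reached the set $N^{+}_Y(x)$ still has its original value, and when $y\in Y$ is reached the set $N^{+}_X(y)$ still has its original value (while $G[X]$ is untouched throughout). Therefore the number of deletions made at $x$ is $(d_x-1)^{+}$, except that it is $d_x$ when $d_x\ge1$ and no admissible endpoint remains; and the number of deletions made at $y$ is $\tau_y:=|N^{+}_X(y)|-\omega\bigl(G[N^{+}_X(y)]\bigr)$, where $\omega(\cdot)$ is the clique number.

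It then remains to bound $\sum_{x\in X}(d_x-1)^{+}+\bigl|\{x\in X:d_x\ge1,\ \text{no admissible endpoint}\}\bigr|+\sum_{y\in Y}\tau_y$ by $\gamma|X||Y|$. For the first term, $\sum_x\binom{d_x}{2}$ is at most the number of induced copies of $D$; since $(d_x-1)^{+}\le\sqrt{2\binom{d_x}{2}}$, Cauchy--Schwarz over the at most $|X|$ indices with $d_x\ge2$ gives $\sum_x(d_x-1)^{+}\le\sqrt{2|X|\cdot\tfrac{\gamma^2}{32}|X||Y|\min\{|X|,|Y|\}}\le\tfrac{\gamma}{4}|X||Y|$, using $\min\{|X|,|Y|\}\le|Y|$. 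The second term is at most $|X|\le\tfrac{\gamma}{4}|X||Y|$, by the hypothesis $|Y|\ge 4/\gamma$. For the third term, $G[X]$ is induced $D$-free, hence chordal, hence has no induced $C_4$, so $\overline{G[N^{+}_X(y)]}$ is $2K_2$-free; and a $2K_2$-free graph with a matching of size $\mu$ has at least $\binom{\mu}{2}+\mu$ edges -- for any two disjoint matching edges there must be a further edge joining them, and these are pairwise distinct. Thus if $\mu_y$ is the maximum matching size of $\overline{G[N^{+}_X(y)]}$ then $\bar e(N^{+}_X(y))\ge \mu_y^{2}/2$, whence $\tau_y\le 2\mu_y\le 2\sqrt{2\,\bar e(N^{+}_X(y))}$. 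Since $\sum_y\bar e(N^{+}_X(y))$ is at most the number of induced copies of $D$, Cauchy--Schwarz over the at most $|Y|$ nonzero terms gives $\sum_y\tau_y\le 2\sqrt2\sqrt{|Y|\cdot\tfrac{\gamma^2}{32}|X||Y|\min\{|X|,|Y|\}}\le\tfrac{\gamma}{2}|X||Y|$, now using $\min\{|X|,|Y|\}\le|X|$. Summing the three bounds gives at most $\gamma|X||Y|$ deleted edges, as required.

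The crux is the estimate on $\sum_y\tau_y$: a naive argument, charging one deletion to each induced $D$-copy whose apex lies in $Y$, only yields $\tfrac{\gamma^2}{32}|X||Y|\min\{|X|,|Y|\}$ deletions, far too weak when $\min\{|X|,|Y|\}$ is large. The square-root improvement comes from chordality of $G[X]$: the non-edges inside each $N^{+}_X(y)$ form a $2K_2$-free graph, hence contain no large induced matching, which forces $\tau_y$ to be small compared to $\bar e(N^{+}_X(y))$. A secondary issue requiring care is the monotonicity of the decreasing-order deletion scheme, i.e.\ checking that fixing a vertex never re-creates an induced copy of $D$ at an already-fixed vertex.
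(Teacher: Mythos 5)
Your proof is correct and follows essentially the same route as the paper's: you classify induced $D$-copies by whether their first vertex lies in $X$ or in $Y$, bound the first kind by $\sum_{x}\binom{d_x}{2}$, handle the second via a maximum matching of non-edges inside each forward neighbourhood $N^{+}_X(y)$ together with the observation that $G[X]$ contains no induced $C_4$, and close with the same convexity estimate (Jensen in the paper, Cauchy--Schwarz for you), using $|Y|\ge 4/\gamma$ in the same place. The only difference is cosmetic: the paper simply deletes \emph{all} forward $X$-to-$Y$ edges and then the $2|M_y|$ matched endpoints at each $y$, which yields the same bounds while avoiding your single-sweep admissibility bookkeeping.
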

\begin{proof}
	We delete edges in two steps. For each $x \in X$, let $d_x$ be the number of $y \in Y$, $y > x$, such that $\{x,y\} \in E(G)$. Let $e := \sum_{x \in X}{d_x}$ be the number of edges in which the first vertex is in $X$ and the second in $Y$. We claim that $e \leq \gamma|X||Y|/2$. If $e \leq 2|X|$ then this holds because $|Y| \geq 4/\gamma$ by assumption. Suppose that $e \geq 2|X|$. Observe that as $Y$ is an independent set, each $x$ participates in $\binom{d_x}{2}$ induced copies of $D$, in which the other two vertices are from $Y$. By using our assumption on the one hand and Jensen's inequality on the other, we get 
	$$\frac{\gamma^2}{32}|X||Y|^2 \geq
	\sum_{x \in X}\binom{d_x}{2} \geq
	|X|\binom{e/|X|}{2} \geq \frac{e^2}{4|X|},
	$$
	where the last inequality uses $e \geq 2|X|$.
So indeed $e \leq \gamma|X||Y|/2$. Delete all edges in which the first vertex is in $X$ and the second in $Y$. This destroys all induced $D$-copies in which the first vertex is in $X$ (since $G[X]$ is induced $D$-free by assumption). 
	
	Next, for each $y \in Y$, let $N_y \subseteq X$ be the set of all $x \in X$, $x > y$, with $\{x,y\} \in E(G)$. Let $M_y$ be a largest matching of non-edges inside $N_y$. 
	Observe that for every $\{x_1,x_2\},\{x_3,x_4\} \in M_y$, at least one of the pairs $\{x_1,x_3\},\{x_1,x_4\},\{x_2,x_3\},\{x_2,x_4\}$ must be a non-edge. Indeed, otherwise $x_1,x_2,x_3,x_4$ span an induced 4-cycle, which must contain an induced copy of $D$, in contradiction to the assumption that $G[X]$ is induced $D$-free. Note that $y$ forms an induced $D$-copy with every non-edge inside $N_y$. Hence, $y$ is part of at least $|M_y| + \binom{|M_y|}{2} \geq \frac{|M_y|^2}{2}$ induced $D$-copies, in which the other two vertices are from $X$. Using our assumption and Jensen's inequality, we get
    $$
	\frac{\gamma^2}{32}|X|^2|Y| \geq
	\sum_{y \in Y}\frac{|M_y|^2}{2} \geq
	\frac{|Y|}{2} \cdot \left(\frac{1}{|Y|}\sum_{y \in Y}|M_y|\right)^2 = \frac{\left( \sum_{y \in Y}|M_y| \right)^2}{2|Y|},
	$$
Hence $\sum_{y \in Y}|M_y| \leq \gamma|X||Y|/4$. For each $y \in Y$, delete the edges between $y$ and $x_1,x_2$ for every $\{x_1,x_2\} \in M_y$. By the choice of $M_y$, after this step the set $\{x \in X : x > y, \{x,y\} \in E(G)\}$ is a clique. Hence, this step destroys all induced $D$-copies in which the first vertex is in $Y$ (recall that $Y$ is an independent set). The number of edge deletions in this second step is $2\sum_{y \in Y}|M_y| \leq \gamma|X||Y|/2$. Hence, the total number of edges deleted is at most $\gamma|X||Y|$. 
\end{proof}

\noindent
We are finally in a position to prove Theorem \ref{thm:P3}. 

\begin{proof}[Proof of Theorem \ref{thm:P3}]
	We may and will assume that $n$ is larger than some suitable polynomial function of $1/\varepsilon$. Set 
	$$
	\varepsilon_1 := \frac{\varepsilon^5}{1000},
	$$
	$$
	\varepsilon_2 := \frac{\varepsilon^{3}\cdot\varepsilon_1^{36}}{2^{100}},
	$$
	$$
	\varepsilon_3 := 
	\min\left\{ \frac{c\cdot\varepsilon^{6}\cdot\varepsilon_1^{15}}{2^{40}}, \; \;
	\frac{\varepsilon_1^{18}\cdot \varepsilon_{2}^{18}}{2^{100}}
	\right\},
	$$
	and
	$$
	\delta := \frac{\varepsilon_3^2 \cdot \varepsilon_1^3}{512} \; .
	$$
	Here, $c$ is the constant from Lemma \ref{lem:interval_partition}. We show that if $G$ contains less than $N:=\delta\varepsilon_1^3n^{3}/32$ induced copies of $D$, then we can make $G$ induced $D$-free by adding/deleting at most $\varepsilon n^{2}$ edges.
	
	First, apply Lemma \ref{lem:partition} with parameters $\gamma = \varepsilon_1$ and $\delta$. As $G$ contains less than $\delta\varepsilon_1^3 n^3/32 $ induced copies of $D$, there exists a partition
	$V(G) = X_1 \cup \dots \cup X_m \cup Y$ satisfying the following properties.
	\begin{enumerate}
	    \item $|X_i|\geq \varepsilon_1 n/2$ and $d(X_i)\geq 1-\delta$ for $i\in [m]$,
	    \item $e(Y)\leq \varepsilon_{1} n^{2}$.
	\end{enumerate}
	  Note that $m \leq 2/\varepsilon_1$. Let $X=X_1\cup\dots\cup X_{m}$. If $X=\emptyset$, then we can make $G$ induced $D$-free by deleting all the edges, which is at most $\varepsilon_1 n^{2}\leq \varepsilon n^{2}$ changes. So we can assume that $X\neq \emptyset$, which implies that $|X|\geq \varepsilon_1 n/2$. We will modify $G$ in 6 steps.
	  
	\paragraph{Step 1.} Make $X_1,\dots,X_m$ cliques and $Y$ an independent set. This requires at most $\delta \cdot \nolinebreak \sum_{i=1}^m{\binom{|X_i|}{2}} \leq \delta \binom{|X|}{2} \leq \delta |X|^2/2$ edge changes inside $X := X_1 \cup \dots \cup X_m$, and at most $\varepsilon_1 n^2$ edge changes inside $Y$. Set $K_1 := \delta |X|^2/2 + \varepsilon_1 n^2$. 
	Denote the resulting graph by $G_1$. Note that every modified edge participates in at most $n$ induced copies of $D$, and in at most $|X|$ induced copies of $D$ which are contained in $X$.  
	Hence, $G_1[X]$ contains at most  $M_{1}=N+\delta |X|^{3}/2 < \delta |X|^{3}$ induced copies of $D$.  
	
	\paragraph{Step 2.} For every $1 \leq i < j \leq m$, make $G_1[X_i \cup X_j]$ induced $D$-free. By Lemma \ref{lem:removal clique_pair general}, this can be done by at most $\varepsilon_3 |X_i||X_j|$ edge changes. Indeed, otherwise $G_1[X_i \cup X_j]$ contains at least 
	$$
	\frac{\varepsilon_3^2}{64}|X_i||X_j|\min \{|X_i|,|X_j|\} \geq \frac{\varepsilon_3^2}{64} \cdot \left(\frac{\varepsilon_1 n}{2}\right)^3 = \delta n^3
	$$
	induced copies of $D$, contradiction. Executing these changes for all pairs $1 \leq i < j \leq m$ requires adding/deleting at most $\varepsilon_3 \binom{|X|}{2} \leq \varepsilon_3 n^2/2=:K_2$ edges altogether. Let $G_2$ be the resulting graph. Note that $G_{2}[X]$ contains at most $M_2=M_1+\varepsilon_3 |X|^3/2<\varepsilon_3 |X|^{3}$ induced copies of $D$.
	
	\vspace{0.3cm}
	\noindent
	Let us now apply Lemma \ref{lem:interval_partition} to $G_{2}[X]$ with parameter $\gamma = \varepsilon/12$. Note that 
	$$
	c\left(\frac{\varepsilon}{12}\right)^6 \frac{|X|^3}{m^{15}} \geq c\left(\frac{\varepsilon}{12}\right)^6 \frac{|X|^3}{(2/\varepsilon_1)^{15}} \geq \varepsilon_{3} |X|^{3} >M_2.
	$$
	Hence, $G_{2}[X]$ contains at most $c\left(\frac{\varepsilon}{12}\right)^6\frac{|X|^3}{m^{15}}$
	induced copies of $D$, meaning that the condition of Lemma \ref{lem:interval_partition} is satisfied. Therefore, there is a set $S \subseteq X$ of size at most $\varepsilon n/12$ and a partition of $X \setminus S$ into $t\leq 2m^{3}\leq 16/\varepsilon_1^{3}$ intervals $I_1,\dots,I_t$ such that for every $j\in [t]$, $G_2[I_j]$ is the disjoint union of cliques, each clique of the form 
	$I_{j}\cap (\bigcup_{i\in M} X_i)$ for some $M\subset[m]$. It follows that $G_2[I_j]$ is induced $D$-free. From this point on, we will make no edge changes inside the sets $I_1,\dots,I_t$, so this property will continue to hold.
	
	\paragraph{Step 3.}
	We will make $G_2[I_i \cup I_j]$ induced $D$-free for every $1 \leq i < j \leq t$, as follows. If $|I_i|$ or $|I_j|$ is smaller than $\frac{\varepsilon_2 |X|}{4t}$, then delete all edges between $I_i$ and $I_j$. Doing this for all such pairs $1 \leq i < j \leq t$ requires at most $t \cdot \frac{\varepsilon_2 |X|}{4t} \cdot |X| = \frac{\varepsilon_2|X|^2}{4}$ edge changes altogether. 
	Now fix a pair $1 \leq i < j \leq t$ with $|I_i|,|I_j| \geq \frac{\varepsilon_2 |X|}{4t}$. Apply Lemma \ref{lem:fixing_pairs} to $I_i,I_{j}$  with parameter $\gamma = \varepsilon_2$. We \nolinebreak have
	$$
	\frac{\varepsilon_2^{15}}{2^{40}m^{9}} |I_i||I_j| \min\{|I_i|,|I_j|\} \geq 
	\frac{\varepsilon_2^{15}}{2^{40}m^{9}} \left( \frac{\varepsilon_2 |X|}{4t} \right)^3 \geq 
	\frac{\varepsilon_2^{18}}{2^{40}(2/\varepsilon_1)^{9}(64/\varepsilon_1^3)^3} |X|^3 \geq 
	\varepsilon_3 |X|^3>M_{2}
	$$
	so the number of induced copies of $D$ in $G[I_{i}\cup I_{j}]$ satisfies the required condition of the lemma. Therefore, we can make $G_2[I_i \cup I_j]$ induced $D$-free by adding/deleting at most $\varepsilon_2|I_i||I_j|$ edges between $I_i$ and $I_j$. Altogether, in step 3 we make at most 
	$\varepsilon_2|X|^2/4 + \varepsilon_2 \sum_{1 \leq i < j \leq t}{|I_i||I_j|} \leq \varepsilon_2|X|^2/4 + \varepsilon_2 \binom{|X|}{2} \leq 3\varepsilon_2 |X|^2/4=:K_3$ edge changes. Denote the resulting graph by $G_3$. Note that the number of induced copies of $D$ in $G_{3}[X]$ is at most $M_3=M_{2}+3\varepsilon_{2}|X|^{3}/4<\varepsilon_2|X|^{3}$.
	
	\paragraph{Step 4.} Fix any $1 \leq j_1 < j_2 < j_3 \leq t$ and $1 \leq i_1,i_2,i_3 \leq m$. Apply Lemma \ref{lem:three_cliques vertex_removal} to the cliques $X_{i_1} \cap I_{j_1},X_{i_2} \cap I_{j_2},X_{i_3} \cap I_{j_3}$ with parameter $s = \frac{\varepsilon n}{3m^{3}t^3}$. The number of induced copies of $D$ in $G_{3}[(X_{i_1} \cap I_{j_1})\cup(X_{i_2} \cap I_{j_2})\cup(X_{i_3} \cap I_{j_3})]$ is at most $M_{3}$, and we have
	$$
	\frac{s^3}{12}=\frac{1}{12} \cdot \left(\frac{\varepsilon n}{3m^3t^3} \right)^3 \geq 
	\frac{1}{12} \cdot \left(\frac{\varepsilon n}{3 \cdot (2/\varepsilon_1)^3(16/\varepsilon_1^{3})^3} \right)^3 > \varepsilon_2 n^3 > M_{3}.
	$$
	So the condition in Lemma \ref{lem:three_cliques vertex_removal} is satisfied.
    Therefore, there is a set $S_{j_1,j_2,j_3,i_1,i_2,i_3}$ 
	of size $3s=\frac{\varepsilon n}{m^3t^3}$ which intersects each such induced $D$-copy. Add the elements of $S_{j_1,j_2,j_3,i_1,i_2,i_3}$ to $S$. Doing this for every $1 \leq j_1 < j_2 < j_3 \leq t$ and $1 \leq i_1,i_2,i_3 \leq m$ increases the size of $S$ by at most $\binom{t}{3}m^3 \cdot \frac{\varepsilon n}{m^3t^3} \leq \varepsilon n/6$. Hence, after this step we have $|S| \leq \varepsilon n/12 + \varepsilon n/6 = \varepsilon n/4$. 
%
	Observe that after step 4, there are no induced copies of $D$ in $G_3[X \setminus S]$. 
	
	
	\paragraph{Step 5.}
	Delete all edges touching the vertices in $S$. This requires at most $|S|n \leq \varepsilon n^2/4=:K_5$ edge changes. Note that if $|Y| \leq \varepsilon n/2$, then by deleting all edges touching $Y$, of which there are at most $|Y|n \leq \varepsilon n^2/2$, we make the graph induced $D$-free. Since the total number of edge changes in all previous steps is at most $K_1+K_2+K_3+K_5<2\varepsilon_1 n^2 + \varepsilon n^2/4 \leq \varepsilon n^2/2$, this would contradict the assumption that $G$ is $\varepsilon$-far from being induced $D$-free. Hence, $|Y| \geq \varepsilon n/2$. By similar reasoning, we may assume that $|X \setminus S| \geq \varepsilon n/2$, since otherwise we may delete all the at most $\varepsilon n^2/2$ edges touching $X \setminus S$ and thus make the graph empty (recall that $G_3[Y]$ is an empty graph). 
	
	\paragraph{Step 6.}
    Note that $G_3[(X \setminus S) \cup Y]$ contains at most $N + (K_1 + K_2 + K_3)n \leq 2\varepsilon_1 n^{3}$ induced copies of $D$.
    We have
	$$\frac{\varepsilon^2}{32} \cdot |X \setminus S| \cdot |Y| \cdot \min\{|X \setminus S|,|Y|\} \geq \frac{\varepsilon^2}{32} \left(\frac{\varepsilon n}{2}\right)^3 > 2\varepsilon_1 n^3,$$
	hence, we can apply Lemma \ref{lem:P2 independet_set} with parameter $\gamma = \varepsilon$ to make $G_3[(X \setminus S) \cup Y]$ induced $D$-free by changing at most $\varepsilon |X \setminus S||Y| \leq \varepsilon n^2/4=:K_6$ edges between $X \setminus S$ and $Y$. This makes the entire graph induced $D$-free. The overall number of edge changes in all steps is at most $$K_1+K_2+K_3+K_5+K_6<\varepsilon n^2,$$ contradicting the assumption that $G$ is $\varepsilon$-far from being induced $D$-free. This completes the proof.
\end{proof}

\section{Lower bounds}
In this section we prove the ``only if'' part of Theorem \ref{thm:induced}, as well as Theorem \ref{thm:noninduced}. Two subgraphs of a graph $G$ will be called {\em pair-disjoint} if they share at most one vertex. We will use the obvious fact that if $G$ contains a collection of $\varepsilon n^2$ pairwise pair-disjoint (induced) copies of $F$, then $G$ is $\varepsilon$-far from being (induced) $F$-free. 
We need the following simple claim. 

\begin{lemma}\label{lem:design}
	For $k \geq 2$ and $r \geq 2k$, there is a collection $R \subseteq [r]^k$, $|R| \geq  r^2/4$, such that any two $k$-tuples in $R$ coincide on at most one coordinate. 
\end{lemma}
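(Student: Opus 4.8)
The plan is to construct $R$ from a Reed--Solomon / affine-function code over a prime field whose order is close to $r/2$. First I would fix, via Bertrand's postulate applied to $\lfloor r/2\rfloor$ (which is at least $1$ because $r\ge 2k\ge 4$), a prime $q$ with $\lfloor r/2\rfloor < q \le 2\lfloor r/2\rfloor \le r$. This choice guarantees both $q \ge \lfloor r/2\rfloor + 1 \ge r/2$, so that $q^2 \ge r^2/4$, and $q > \lfloor r/2\rfloor \ge k$ (using $r\ge 2k$), so that $\mathbb{F}_q$ contains at least $k$ distinct elements. Since $q\le r$, I may identify the ground set of $\mathbb{F}_q$ with a $q$-element subset of $[r]$ and thereafter treat elements of $\mathbb{F}_q$ as elements of $[r]$.

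Next, fix $k$ distinct elements $x_1,\dots,x_k\in\mathbb{F}_q$, and for every pair $(a,b)\in\mathbb{F}_q^2$ form the $k$-tuple $v_{a,b} := (a x_1 + b,\, a x_2 + b,\,\dots,\,a x_k + b)\in[r]^k$. Let $R := \{v_{a,b} : (a,b)\in\mathbb{F}_q^2\}$. The assignment $(a,b)\mapsto v_{a,b}$ is injective, since the first two coordinates of $v_{a,b}$ determine $a(x_2-x_1) = (ax_2+b)-(ax_1+b)$, hence $a$ (as $x_2\ne x_1$ is invertible in $\mathbb{F}_q$), and then $b$. Therefore $|R| = q^2 \ge r^2/4$, as needed.

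It remains to check the intersection condition. Suppose $v_{a,b}$ and $v_{a',b'}$ agree in coordinate $i$; then $a x_i + b = a' x_i + b'$, i.e.\ $(a-a')x_i = b'-b$. If $(a,b)\ne(a',b')$, then $(a-a',\,b'-b)\ne(0,0)$: in the case $a=a'$ this equation is unsatisfiable, and in the case $a\ne a'$ it forces $x_i = (b'-b)(a-a')^{-1}$, a single value. Hence two distinct members of $R$ agree in at most one coordinate.

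The one point that genuinely requires the field structure — and the only place I expect any difficulty — is meeting the bound $|R|\ge r^2/4$ exactly. A direct construction over $\mathbb{Z}$, taking $v_{a,b} = (a+b, a+2b,\dots,a+kb)$ with $a$ and $b$ ranging over intervals chosen so that every entry stays in $[r]$, produces only about $r^2/(4k)$ tuples, because a factor of $k$ of the range must be reserved for the term $kb$. Passing to arithmetic modulo a prime $q\approx r/2$ eliminates this loss, since then $q^2 \ge r^2/4$; the price is that one needs $q\ge k$, which is precisely what the hypothesis $r\ge 2k$ provides.
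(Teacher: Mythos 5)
Your proof is correct and takes essentially the same approach as the paper's: both apply Bertrand's postulate to obtain a prime $p$ with $r/2 < p \leq r$ and take the $p^2 \geq r^2/4$ affine functions $x \mapsto ax+b$ over $\mathbb{F}_p$ evaluated at $k$ distinct points, with the hypothesis $r \geq 2k$ guaranteeing $p > k$ so that the evaluation points exist (the paper uses the points $0,1,\dots,k-1$). The only differences are cosmetic: you allow arbitrary distinct evaluation points and spell out the injectivity of $(a,b)\mapsto v_{a,b}$, which the paper leaves implicit.
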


\begin{proof}
   Let $p$ be a prime such that $r/2< p\leq r$, which exists by Bertrand's postulate. For $a,b \in \mathbb{F}_p$, let $x_{a,b} \in \mathbb{F}_p^k$ be the $k$-tuple $x_{a,b}(i) = a + (i-1)b$, $i = 1,\dots,k$. 
	For $(a_1,b_1) \neq (a_2,b_2)$, there is at most one $1 \leq i \leq k$ with $x_{a_1,b_1}(i) = x_{a_2,b_2}(i)$. Indeed, if there are two such $1 \leq i \neq j \leq k$, then $a_1 + (i-1)b_1 = a_2 + (i-1)b_2$ and $a_1 + (j-1)b_1 = a_2 + (j-1)b_2$. Solving this system of equations gives $a_1 = a_2$ and $b_1 = b_2$, a contradiction. Here we use the fact that $i \not\equiv j \pmod{p}$, which follows from $p > r/2 \geq k$. 
	\end{proof}

We will adapt the constructions in \cite{Alon} and \cite{AS_induced} to ordered graphs. 
These constructions use generalizations of Behrend's example \cite{Behrend} of large sets of integers with no 3-term arithmetic progressions, see Lemma 3.1 in \cite{Alon} and Lemma 4.1 in \cite{AS_induced}. We will use the following common generalization of these two lemmas.
\begin{lemma}\label{lem:Behrend}
	For every $k \geq 3$ and $m$, there is $S \subseteq [m]$ of size at least $m \cdot e^{-c_k\sqrt{\log m}}$ such that for every $3 \leq t \leq k$ and for every choice of integers $p_1,\dots,p_{t-1} \geq 1$ with $p_1 + \dots + p_{t-1} \leq k$, 
	if $s_1,\dots,s_t \in S$ satisfy $p_1s_1 + \dots + p_{t-1}s_{t-1} = (p_1 + \dots + p_{t-1})s_t$ then $s_1 = \dots = s_t$.
\end{lemma}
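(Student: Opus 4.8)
The plan is to run the classical Behrend digit construction \cite{Behrend}, choosing the base large enough that the relevant linear equations can be read off digit by digit. Fix a parameter $D$ (to be optimized at the end), assume $D\geq 2k$, and set $b:=kD$ and $\ell:=\lfloor \log_b m\rfloor$. I would consider all integers of the form $s=\sum_{i=1}^{\ell}a_i(s)\,b^{\,i-1}$ whose base-$b$ digits $a_i(s)$ all lie in $\{0,1,\dots,D-1\}$; there are $D^{\ell}$ such integers, and they all lie in $\{0,1,\dots,b^{\ell}-1\}\subseteq\{0,\dots,m-1\}$, so after translating by $1$ they sit inside $[m]$. The equation $p_1s_1+\dots+p_{t-1}s_{t-1}=(p_1+\dots+p_{t-1})s_t$ is invariant under translating all of $s_1,\dots,s_t$ by a common constant, so this translation is harmless.

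For such an $s$, put $\sigma(s):=\sum_{i=1}^{\ell}a_i(s)^2\in\{0,1,\dots,\ell(D-1)^2\}$. By pigeonhole there is a value $\rho$ for which $S:=\{s:\sigma(s)=\rho\}$ satisfies $|S|\geq D^{\ell}/\big(\ell(D-1)^2+1\big)\geq D^{\ell-2}/\ell$. I claim $S$ has the required property. Suppose $3\leq t\leq k$, $p_1,\dots,p_{t-1}\geq 1$ with $P:=p_1+\dots+p_{t-1}\leq k$, and $s_1,\dots,s_t\in S$ satisfy $p_1s_1+\dots+p_{t-1}s_{t-1}=Ps_t$. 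In base $b$, the $i$-th coordinate sum of the left-hand side is $\sum_j p_j a_i(s_j)\leq P(D-1)<kD=b$, and that of $Ps_t$ is $P\,a_i(s_t)<b$, so neither side carries; hence the identity holds coordinate-wise, $\sum_{j=1}^{t-1}p_j a_i(s_j)=P\,a_i(s_t)$ for every $i$. Writing $v_j=(a_1(s_j),\dots,a_{\ell}(s_j))\in\mathbb{R}^{\ell}$, this reads $v_t=\sum_{j=1}^{t-1}(p_j/P)\,v_j$, a convex combination with strictly positive coefficients. Since $\|v_1\|_2^2=\dots=\|v_t\|_2^2=\rho$ and $x\mapsto\|x\|_2^2$ is strictly convex, Jensen's inequality gives $\rho=\|v_t\|_2^2\leq\sum_j(p_j/P)\|v_j\|_2^2=\rho$ with equality, which forces $v_1=\dots=v_{t-1}$, hence also $v_t=v_1$; therefore $s_1=\dots=s_t$, as needed.

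It remains to optimize $D$. Using $b^{\ell}\leq m$ and $b=kD$ one gets $\ell\geq \log m/\log(kD)-1$, so $\log|S|\geq(\ell-2)\log D-\log\ell\geq\big(\tfrac{\log m}{\log D+\log k}-3\big)\log D-\log\log m\geq\log m-\tfrac{\log k\cdot\log m}{\log D}-3\log D-\log\log m$. Choosing $\log D=\sqrt{\log k\cdot\log m}$ (which is $\geq\log(2k)$ and makes $b=kD\leq m$ once $m$ is large enough; for bounded $m$ a single-element $S$ already satisfies the bound after enlarging $c_k$) makes the two middle terms sum to $4\sqrt{\log k\cdot\log m}$, giving $\log|S|\geq\log m-c_k\sqrt{\log m}$, i.e.\ $|S|\geq m\,e^{-c_k\sqrt{\log m}}$.

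The only points needing care are the ``no carrying'' estimate — this is precisely why the base is $kD$ rather than $D$, and where the hypothesis $p_1+\dots+p_{t-1}\leq k$ is used, uniformly over all admissible $t$ and $p_i$ — and the final parameter optimization; the strict-convexity/Jensen step is the heart of Behrend's original argument and is what destroys the additive structure. I do not expect any serious obstacle beyond this bookkeeping.
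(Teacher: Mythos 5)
Your proof is correct, and it is exactly the argument the paper has in mind: the paper omits the proof of this lemma, noting that it follows the Behrend-type constructions of Lemma~3.1 in \cite{Alon} and Lemma~4.1 in \cite{AS_induced}, which is precisely the digit-in-a-large-base, fixed-$\ell_2$-sphere, no-carry, strict-convexity argument you give. The two points needing care (choosing base $kD$ so that $p_1+\dots+p_{t-1}\leq k$ rules out carries uniformly, and the final optimization of $D$) are handled correctly.
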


The proof of Lemma \ref{lem:Behrend} is very similar to the proofs of the aforementioned lemmas from \cite{Alon,AS_induced} (which themselves closely follow Behrend's original argument). The proof is thus omitted.

The proof of the ``only if'' part of Theorem \ref{thm:induced} involves a case analysis over several small ordered graphs, each of which needs a slightly different variant of a construction from \cite{Alon}. To avoid repetitions, we now introduce a general setting in which this construction can be applied. 
A {\em pattern} $P$ is a complete ordered graph, say on $[k]$ with the natural ordering, whose edges are colored with the colors black, white and gray.
An ordered graph $G$ is said to have the pattern $P$ if there is a partition $V(G) = V_1 \cup \dots \cup V_k$ into independent sets with $V_1 < \dots < V_k$, such that the following condition is satisfied: for every $1 \leq i < j \leq k$, if $\{i,j\}$ is colored black then the bipartite graph $(V_i,V_j)$ is complete, and if $\{i,j\}$ is colored white then the bipartite graph $\{i,j\}$ is empty. The partition $(V_1,\dots,V_k)$ is called a {\em $P$-partition} of $G$.
A {\em completion} of $P$ is an ordered graph $F$ on $[k]$ which has pattern $P$, i.e. $\{i,j\} \in E(F)$ if $\{i,j\}$ is black in $P$ and $\{i,j\}\notin E(F)$ if $\{i,j\}$ is white in $P$. In other words, a completion is obtained by recoloring the gray edges with black/white. 
Let $\mathcal{A}$ be a set of subsets of $V(P)$. We say that $F$ is {\em $(P,\mathcal{A})$-good} if for every ordered graph $G$ with $P$-partition $(V_1,\dots,V_k)$ and for every induced copy $F'$ of $F$ in $G$, there is $A \in \mathcal{A}$ and vertices $v_i \in V_i \cap V(F')$, $i \in A$, such that $(v_i : i \in A)$ form an (ordered) induced copy of $F[A]$. Note that in such a copy, $v_i$ must play the role of $i$ due to the vertex order.
If $\mathcal{A} = \{A\}$ then we will simply write ``$(P,A)$-good'' in place of ``$(P,\{A\})$-good''. 
The following is a generalization of the aforementioned construction from \cite{Alon}.

\begin{lemma}\label{lem:RS_cycle_general}
    Let $P$ be a pattern on $[k]$, let $\mathcal{A}$ be a set of subsets of $V(P)$, and suppose that there is a bijection $\sigma : [k] \rightarrow [k]$ such that for every $A \in \mathcal{A}$, there is a cycle $i_1,\dots,i_t,i_1$ of gray edges in $P[A]$ such that $\sigma(i_1) < \dots < \sigma(i_t)$. Let $F$ be a completion of $P$ which is $(P,\mathcal{A})$-good. Then for every small enough $\varepsilon$ and $n \geq n_0(\varepsilon)$, there is an $n$-vertex ordered graph $G$ with the following properties:
    \begin{enumerate}
    \item $G$ has pattern $P$. 
    \item $G$ contains $\varepsilon n^2$ pair-disjoint induced copies of $F$.
	\item $G$ contains at most $\varepsilon^{\Omega(\log 1/\varepsilon)}n^{v(F)}$ induced copies of $F$. 
	\end{enumerate}
\end{lemma}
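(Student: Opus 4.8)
The plan is to adapt the Ruzsa--Szemer\'edi triangle-removal construction, in the generalized form used by Alon in \cite{Alon}, to the ordered setting dictated by the pattern $P$. First I would fix a parameter $m \approx \sqrt{\varepsilon} n / k$ (up to constants) and apply Lemma~\ref{lem:Behrend} with this $m$ and with the value $k = v(P)$ to obtain a Behrend-type set $S \subseteq [m]$ with $|S| \geq m \cdot e^{-c_k \sqrt{\log m}} = \varepsilon^{O(1/\log(1/\varepsilon))} \cdot n$ many, i.e. at least $n^{1-o(1)}$, elements and no nontrivial solutions to the relevant weighted linear equations. I would then build $G$ on vertex classes $V_1 < \dots < V_k$, each of size $n/k$, identified (say) with $\mathbb{Z}_N$ for a suitable $N = \Theta(n)$. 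Using the bijection $\sigma$, for each gray edge $\{i_a, i_b\}$ of a cycle witnessed in some $P[A]$ one places edges according to whether the ``coordinate difference'' along that edge lies in a shifted copy of $S$; black and white edges of $P$ are realized by complete/empty bipartite graphs as forced by the definition of having pattern $P$. This guarantees property~1 automatically.

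The core of the argument is the counting. For property~2, I would exhibit a canonical family of $\varepsilon n^2$ induced copies of $F$: each copy is indexed by a ``base point'' $b$ (ranging over $\Theta(n)$ values) and an element $s \in S$ ($|S| = n^{1-o(1)}$ values), placing vertex $i$ of $F$ at the position determined by $b$ and $s$ via the linear forms coming from $\sigma$; since $F$ is a completion of $P$ and the gray edges along each cycle are populated exactly on the ``diagonal'' $s$, all of these are genuine induced copies of $F$, and one checks they are pairwise pair-disjoint (two copies sharing two vertices would force agreement in two coordinates, pinning down both $b$ and $s$). This yields $\Theta(n) \cdot n^{1-o(1)} \gg \varepsilon n^2$ copies; after discarding a few one gets exactly $\varepsilon n^2$ pair-disjoint ones. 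For property~3, the key step is: take \emph{any} induced copy $F'$ of $F$ in $G$. By $(P,\mathcal{A})$-goodness, $F'$ restricted to some $A \in \mathcal{A}$ is an induced copy of $F[A]$ with the $i$-th vertex in $V_i$; restricting further to the gray cycle $i_1,\dots,i_t,i_1$ inside $P[A]$, the edges of this cycle in $F'$ must all be present (they are gray in $P$, hence their presence is not forced, but in $F=$ the completion they were recolored — one has to be slightly careful here and use that along a cycle of gray edges the constraints from $S$ must all hold simultaneously, which via Lemma~\ref{lem:Behrend} forces all the ``coordinates'' to be equal). This collapses the $t$ free coordinates to essentially one, so the number of induced copies of $F$ is at most (number of ways to choose the non-cycle vertices freely) $\times$ (number of choices for the collapsed cycle, which is $O(n)$ rather than $O(n^t)$) $= O(n^{v(F)-1})$ — but we want the sharper bound $\varepsilon^{\Omega(\log 1/\varepsilon)} n^{v(F)}$, which comes from the fact that each copy is determined by a base point and a choice of element of a \emph{difference set} of $S$; sumset/difference-set estimates for Behrend sets give $|S - S| \le m^{2} / |S| \cdot (\text{something})$, or more directly the count is $\le n^{v(F)} \cdot (m/|S|)^{\Omega(1)} = n^{v(F)} \cdot e^{-\Omega(\sqrt{\log m})} = \varepsilon^{\Omega(\log 1/\varepsilon)} n^{v(F)}$ after the choice $m = \Theta(\sqrt{\varepsilon}\, n)$ translates $\sqrt{\log m}$ into $\Theta(\sqrt{\log(1/\varepsilon)} )$ er — rather $\log(1/\varepsilon) \lesssim \sqrt{\log m}$, giving the stated $\varepsilon^{\Omega(\log 1/\varepsilon)}$.

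The main obstacle I anticipate is organizing the counting in property~3 so that the gray \emph{cycle} (not just a single gray edge) is what forces the coordinate collapse, and doing so uniformly over all $A \in \mathcal{A}$ and all gray cycles simultaneously. Concretely: an arbitrary induced copy $F'$ might, a priori, use the black/white edges of $P$ in its incidence with $F[A]$ in a way that does not immediately pin down coordinates; the point of requiring a cycle of gray edges with $\sigma$-increasing vertices is that, along such a cycle, the $t$ edge-constraints $d_{i_1 i_2}, d_{i_2 i_3}, \dots, d_{i_t i_1}$ lie in shifted copies of $S$ and their signed sum (around the cycle) telescopes to $0$ — which is precisely a weighted linear relation of the form covered by Lemma~\ref{lem:Behrend}, forcing all the $d$'s to the common value and hence the $t$ coordinates into a one-parameter family. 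Making the bookkeeping of the shifts/weights match the hypotheses $p_1 + \dots + p_{t-1} \le k$ of Lemma~\ref{lem:Behrend} exactly, and handling the fact that $\sigma$ only controls the order of the cycle vertices (so that the vertex order of $G$ is consistent with reading the cycle in the order $i_1,\dots,i_t$), is the delicate part; everything else is a routine adaptation of \cite{Alon}.
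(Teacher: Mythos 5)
Your overall architecture---a Behrend set, linear placement of the copies via $\sigma$, and telescoping the differences around the gray cycle to invoke Lemma~\ref{lem:Behrend}---is the same as the paper's. But your parameter choice creates a genuine gap that kills property~2. You set $m \approx \sqrt{\varepsilon}\,n$ and build $G$ directly on $n$ vertices; the number of canonical copies $F_{b,s}$ is then about $n\cdot|S| \geq n\cdot m\,e^{-c_k\sqrt{\log m}} = \Theta_{\varepsilon}\bigl(n^{2}e^{-c_k\sqrt{\log n}}\bigr)$, which is $o(n^2)$ as $n\to\infty$ with $\varepsilon$ fixed, i.e.\ \emph{smaller} than $\varepsilon n^2$ for all large $n$, not larger as you claim ("$\Theta(n)\cdot n^{1-o(1)}\gg \varepsilon n^2$" has the inequality backwards). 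Your own difficulty converting $\sqrt{\log m}$ into a function of $\varepsilon$ at the end of your second paragraph is where this surfaces: with $m$ depending on $n$, the loss factor $e^{-c_k\sqrt{\log m}}$ depends on $n$, not on $\varepsilon$. The paper decouples $m$ from $n$: it takes $m$ to be the \emph{largest} integer with $e^{-c_k\sqrt{\log m}}\geq 4k^4\varepsilon$ (so $m=m(\varepsilon)\geq (1/\varepsilon)^{\Omega(\log 1/\varepsilon)}$ and $|S|\geq 4k^4\varepsilon m$), builds a small auxiliary graph $H$ on $O(k^2m)$ vertices containing $\geq 4k^4\varepsilon m^2$ pair-disjoint induced copies $F_{x,s}$, and then passes to the $\frac{n}{v(H)}$-blowup, using Lemma~\ref{lem:design} to convert each copy in $H$ into $\bigl(\frac{n}{2v(H)}\bigr)^2$ pair-disjoint copies in $G$. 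The blowup step (and the corresponding two-stage count for property~3: first locate the $H$-vertices $v_{i_1},\dots,v_{i_t}$ via $(x,s)$, then count blowup representatives) is not optional---without it one cannot reach $\varepsilon n^2$ pair-disjoint copies for a fixed $\varepsilon$ and arbitrarily large $n$.

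A second, smaller omission concerns the construction on gray pairs. For the counting in property~3 you need that \emph{every} gray pair $\{i,j\}$ of the cycle---whether $\{i,j\}$ is an edge or a non-edge of $F$---is forced, in any induced copy, to lie inside some $F_{x,s}$. The paper arranges this by setting the adjacency of every gray pair $(v_i,v_j)$ not covered by any $F_{x,s}$ to be the \emph{opposite} of the adjacency of $\{i,j\}$ in $F$; in particular, for a gray non-edge of $F$ the off-diagonal pairs become edges. Your description ("place edges where the coordinate difference lies in a shifted copy of $S$") handles gray edges of $F$ but leaves gray non-edges of $F$ as empty bipartite graphs, in which case an induced copy could realize such a pair off the diagonal and the coordinate collapse would not be forced. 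You did correctly identify that the positivity of the weights $p_j=\sigma(i_{j+1})-\sigma(i_j)$ in Lemma~\ref{lem:Behrend} is exactly what the hypothesis $\sigma(i_1)<\dots<\sigma(i_t)$ provides.
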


\begin{proof}
As above, we assume that $V(F) = V(P) = [k]$
(with the natural vertex order). Let $m$ be the maximal integer satisfying $e^{-c_k \sqrt{\log m}} \geq 4k^4\varepsilon$, where $c_k$ is from Lemma \ref{lem:Behrend}. It is easy to see that $m \geq (1/\varepsilon)^{\Omega(\log 1/\varepsilon)}$.
Let $S$ be the set guaranteed by Lemma \ref{lem:Behrend}; so $|S| \geq 4k^4\varepsilon m$ by our choice of $m$.
Let $\sigma : [k] \rightarrow [k]$ be a bijection as in the statement of the lemma. We start by defining an ordered graph $H$, as follows. The vertex-set of $H$ consists of $k$ pairwise-disjoint independent sets $V_1,\dots,V_k$ with $V_1 < \dots < V_k$. For each $1 \leq i \leq k$, the set $V_i$ is identified with $[\sigma(i) \cdot m]$. 
So $|V(H)| = \sum_{i=1}^{k}{i \cdot m} \leq k^2m$. 
For each $x \in [m]$ and $s \in S$, add a copy of $F$ on vertices $v_1,\dots,v_k$, where $v_i = x + (\sigma(i)-1) \cdot s \in V_i$; this copy is denoted by $F_{x,s}$.
Next, for each black edge $\{i,j\}$ of $P$, make the bipartite graph $(V_i,V_j)$ complete, and for each white edge $\{i,j\}$ of $P$, make the bipartite graph $(V_i,V_j)$ empty. This agrees with the copies $F_{x,s}$, since $F$ has pattern $P$. 
Finally, for each gray edge $\{i,j\}$ of $P$ and for each $v_i \in V_i, v_j \in V_j$, if $\{v_i,v_j\}$ is not contained in any of the copies $F_{x,s}$, then make $\{v_i,v_j\}$ an edge of $H$ if $\{i,j\} \notin E(F)$, and a non-edge of $H$ if $\{i,j\} \in E(F)$.
By construction, $H$ has pattern $P$ with $P$-partition $(V_1,\dots,V_k)$.

For distinct pairs $(x_1,s_1),(x_2,s_2) \in [m] \times S$, the copies $F_{x_1,s_1}$ and $F_{x_2,s_2}$ are pair-disjoint. Indeed, if $F_{x_1,s_1}$ and $F_{x_2,s_2}$ have the same vertex in $V_i$ and $V_j$ (for some $1 \leq i < j \leq k$), then $x_1 + (\sigma(i)-1)s_1 = x_2 + (\sigma(i)-1)s_2$ and $x_1 + (\sigma(j)-1)s_1 = x_2 + (\sigma(j)-1)s_2$. Solving this system of equations, we get that $x_1 = x_2$ and $s_1 = s_2$. So we conclude that the copies $(F_{x,s})_{(x,s) \in [m] \times S}$ of $F$ are pair-disjoint and hence induced. The number of these copies is $m|S| \geq 4k^4 \varepsilon m^2$.

Now, let $G$ be the $\frac{n}{v(H)}$-blowup of $H$. For $1 \leq i \leq k$, denote by $W_i$ the blowup of $V_i$. 
It is easy to see that $G$ has pattern $P$ with $P$-partition $(W_1,\dots,W_k)$. 
Each induced copy of $F$ in $H$ gives rise to $\big(\frac{n}{2v(H)}\big)^2$ pair-disjoint induced copies of $F$ in $G$, by Lemma \ref{lem:design} with parameter $r=\frac{n}{v(H)}$. Hence, $G$ contains a collection of 
$4k^4\varepsilon m^2 \cdot \big(\frac{n}{2v(H)}\big)^2 \geq \varepsilon n^2$ pair-disjoint induced copies of $F$.

To complete the proof it remains to show that item 3 holds. So let $F'$ be an induced copy of $F$ in $G$. 
Since $F$ is $(P,\mathcal{A})$-good, there is $A \in \mathcal{A}$ and vertices $w_i \in W_i \cap V(F')$ for $i \in A$, such that
$(w_i : i \in A)$ form an induced copy of $F[A]$ (in $G$). 
For $i \in A$, let $v_i \in V_i$ be such that $w_i$ belongs to the blowup of $v_i$. Then $(v_i : i \in A)$ form an induced copy of $F[A]$ in $H$. 
By the assumption of the lemma, there is a cycle $i_1,\dots,i_t,i_1$ in $P[A]$ consisting of gray edges, such that $\sigma(i_1) < \dots < \sigma(i_t)$. 
By the construction of $H$, for every gray edge $\{i,j\}$ of $P[A]$, it must be that $\{v_i,v_j\}$ is contained in $V(F_{x,s})$ for some $x \in [m], s \in S$. Indeed, if $\{v_i,v_j\}$ is not contained in any $F_{x,s}$, then the adjacency relation of $\{v_i,v_j\}$ is opposite to the adjacency relation of $\{i,j\}$ in $F$. 
So we see that for every $1 \leq j \leq t$, there are $x_j \in X, s_j \in S$ such that $\{v_{i_j},v_{i_{j+1}}\} \subseteq V(F_{x_j,s_j})$ (with indices taken modulo $t$). For $1 \leq j \leq t-1$, this means that $v_{i_j} = x_j + (\sigma(i_j) - 1)s_j$ and $v_{i_{j+1}} = x_j + (\sigma(i_{j+1}) - 1)s_j$; hence, 
$v_{i_{j+1}} - v_{i_j} = 
(\sigma(i_{j+1}) - \sigma(i_j)) \cdot s_j$. Similarly, for $j = t$ we have $v_{i_1} = x_t + (\sigma(i_1) - 1) \cdot s_t$ and $v_{i_t} = x_t + (\sigma(i_t) - 1) \cdot s_t$, and hence $v_{i_t} - v_{i_1} = (\sigma(i_{t}) - \sigma(i_1)) \cdot s_t$. So we get that 
$(\sigma(i_2)-\sigma(i_1)) \cdot s_1 + \dots + (\sigma(i_t) - \sigma(i_{t-1})) \cdot s_{t-1} = (\sigma(i_t) - \sigma(i_1)) \cdot s_t$. 
Now we use our choice of $S$ via Lemma \ref{lem:Behrend}, taking $p_1,\dots,p_{t-1}$ in Lemma \ref{lem:Behrend} to be $p_j = \sigma(i_{j+1}) - \sigma(i_j)$. (Here we use that $\sigma(i_1) < \dots < \sigma(i_t)$ so that $p_1,\dots,p_{t-1}$ are positive.) We obtain that $s_1 = \dots = s_t =: s$. 
We now get that $x_j = v_{i_{j+1}} - (\sigma(i_{j+1}) - 1) \cdot s = x_{j+1}$ (for every $1 \leq j \leq t-1$), and hence $x_1 = \dots = x_t$. 

So far we have shown that for every induced copy $F'$ of $F$ in $G$, 
there exist $x \in [m]$, $s \in S$, a set $A \in \mathcal{A}$, a cycle $i_1,\dots,i_t,i_1$ in $P[A]$,
and vertices
$v_{i_j} \in V_{i_j}$ and $w_{i_j} \in W_{i_j} \cap V(F')$, such that $w_{i_j}$ belongs to the blowup of $v_{i_j}$, and $v_{i_1},\dots,v_{i_t} \in V(F_{x,s})$. There are $|\mathcal{A}| \leq 2^k = O(1)$ choices for $A$, and fixing $A$ determines $i_1,\dots,i_t$.
The number of choices for $(x,s)$ is $m|S| \leq m^2$, and fixing $x,s$ determines
$v_{i_1},\dots,v_{i_t}$. Now, given $v_{i_1},\dots,v_{i_t}$, there are 
$\big( \frac{n}{v(H)} \big)^t$ choices for $w_{i_1},\dots,w_{i_t}$, and at most $n^{k-t}$ choices for the remaining $k - t$ vertices of $F$. 
Hence, given $v_{i_1},\dots,v_{i_t}$, the number of choices for an induced copy of $F$ is at most $\big( \frac{n}{v(H)} \big)^t \cdot n^{k - t} \leq \big( \frac{n}{v(H)} \big)^3 \cdot n^{k - 3} = n^k/v(H)^3 \leq n^k/m^3$.
So overall, the number of induced copies of $F$ in $G$ is at most
$O(1) \cdot m^2 \cdot n^{k}/m^3 = O(n^k/m) \leq \varepsilon^{\Omega(\log 1/\varepsilon)} n^{k}$, as required. 
\end{proof}

Evidently, for every specific cycle $i_1,\dots,i_t,i_1$ in $P$, one can choose a bijection $\sigma : [k] \rightarrow [k]$ with $\sigma(i_1) < \dots < \sigma(i_t)$. Hence, a bijection $\sigma$ as in Lemma \ref{lem:RS_cycle_general} always exists when $\mathcal{A} = \{A\}$. We therefore have the following corollary:

\begin{lemma}\label{lem:RS_cycle}
    Let $P$ be a pattern, and let $A \subseteq V(P)$ such that $P[A]$ has a cycle consisting of gray edges. Let $F$ be a $(P,A)$-good completion of $P$. Then the conclusion of Lemma \ref{lem:RS_cycle_general} holds. 
\end{lemma}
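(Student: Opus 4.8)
The plan is to obtain this statement as the special case $\mathcal{A}=\{A\}$ of Lemma~\ref{lem:RS_cycle_general}. To invoke that lemma I need to check its two hypotheses: that there is a bijection $\sigma:[k]\to[k]$ which, for every member of $\mathcal{A}$, orders the vertices of some gray cycle increasingly; and that $F$ is $(P,\mathcal{A})$-good. The second hypothesis is immediate, since by the notational convention introduced in the excerpt, ``$(P,A)$-good'' is by definition the same as ``$(P,\{A\})$-good'', which is exactly the hypothesis here.

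For the bijection, I would argue exactly as in the remark preceding the statement. By assumption $P[A]$ contains a cycle $i_1,\dots,i_t,i_1$ all of whose edges are gray; in particular $i_1,\dots,i_t$ are $t$ pairwise distinct elements of $[k]$. Define $\sigma:[k]\to[k]$ by sending $i_j\mapsto j$ for $j=1,\dots,t$ (so that $\sigma(i_1)<\dots<\sigma(i_t)$), and extending to a bijection by mapping the remaining $k-t$ elements of $[k]$ to $\{t+1,\dots,k\}$ in any order. This $\sigma$ satisfies the requirement of Lemma~\ref{lem:RS_cycle_general} for the unique set $A\in\mathcal{A}=\{A\}$, namely that it orders the gray cycle $i_1,\dots,i_t,i_1$ of $P[A]$ increasingly.

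With both hypotheses verified, Lemma~\ref{lem:RS_cycle_general} supplies, for every sufficiently small $\varepsilon$ and every $n\ge n_0(\varepsilon)$, an $n$-vertex ordered graph $G$ having pattern $P$, containing $\varepsilon n^2$ pair-disjoint induced copies of $F$, and containing at most $\varepsilon^{\Omega(\log 1/\varepsilon)}n^{v(F)}$ induced copies of $F$ in total; this is precisely the conclusion claimed. There is no real obstacle: the only feature of Lemma~\ref{lem:RS_cycle_general} that genuinely uses the extra generality of a larger family $\mathcal{A}$ is the requirement that a \emph{single} bijection $\sigma$ simultaneously order a gray cycle inside every member of $\mathcal{A}$, and for a one-element family this is trivially arranged by the construction above.
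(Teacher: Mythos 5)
Your proposal is correct and matches the paper's argument: the paper likewise derives this as the case $\mathcal{A}=\{A\}$ of Lemma~\ref{lem:RS_cycle_general}, observing that for a single gray cycle one can always choose a bijection $\sigma$ ordering its vertices increasingly. Your explicit construction of $\sigma$ just spells out what the paper states in one line.
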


\noindent
Lemma \ref{lem:RS_cycle_general} implies the following statement, which extends a construction from \cite{AS_induced} to ordered graphs. 

\begin{lemma}\label{lem:RS_triangles}
	Let $F$ be an ordered graph which contains a triangle. Then for every sufficiently small $\varepsilon>0$ and $n>n_0(\varepsilon)$, there is an $n$-vertex ordered graph $G$ which contains $\varepsilon n^2$ pair-disjoint induced copies of $F$, but only $\varepsilon^{\Omega(\log 1/\varepsilon)}n^{v(F)}$ induced copies of $F$ altogether. 
\end{lemma}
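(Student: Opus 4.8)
The plan is to derive this from Lemma~\ref{lem:RS_cycle_general}. Write $V(F)=[k]$ with the natural order, where $k=v(F)$, and define a pattern $P$ on $[k]$ by coloring a pair $\{i,j\}$ gray if $\{i,j\}\in E(F)$ and white if $\{i,j\}\notin E(F)$ (so $P$ has no black edges). Then $F$ is a completion of $P$: recoloring every gray edge black recovers $F$. Let $\mathcal{A}$ be the family of all $3$-element sets $A\subseteq[k]$ for which $F[A]$ is a triangle; since $F$ contains a triangle, $\mathcal{A}\neq\emptyset$, and $|\mathcal{A}|\le\binom{k}{3}=O(1)$. For every $A=\{p<q<r\}\in\mathcal{A}$, the three pairs inside $A$ are edges of $F$, hence gray, so $P[A]$ is a gray triangle; taking $\sigma$ to be the identity permutation of $[k]$, the cycle $p,q,r,p$ satisfies $\sigma(p)<\sigma(q)<\sigma(r)$. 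Thus the hypotheses of Lemma~\ref{lem:RS_cycle_general} concerning $P$, $\mathcal{A}$ and $\sigma$ are met, and it remains to check that $F$ is $(P,\mathcal{A})$-good.

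To verify $(P,\mathcal{A})$-goodness, fix once and for all a triangle $\{a<b<c\}$ of $F$. Let $G$ be an ordered graph with a $P$-partition $(V_1,\dots,V_k)$ and let $F'$ be an induced copy of $F$ in $G$, with vertices $u_1<\dots<u_k$ so that $u_\ell$ plays the role of vertex $\ell$ of $F$. Writing $u_\ell\in V_{j(\ell)}$, the index map $j$ is nondecreasing because $V_1<\dots<V_k$ and $u_1<\dots<u_k$. Since $\{a,b\},\{a,c\},\{b,c\}\in E(F)$, the vertices $u_a,u_b,u_c$ are pairwise adjacent in $G$; as each $V_i$ is independent, these three vertices lie in three distinct classes, so $j(a)<j(b)<j(c)$. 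Moreover each of the bipartite graphs $(V_{j(a)},V_{j(b)})$, $(V_{j(a)},V_{j(c)})$, $(V_{j(b)},V_{j(c)})$ contains one of these edges, hence is nonempty, hence not white in $P$; therefore the corresponding pairs are edges of $F$, so $A:=\{j(a),j(b),j(c)\}$ spans a triangle of $F$, i.e. $A\in\mathcal{A}$. The vertices $u_a\in V_{j(a)}$, $u_b\in V_{j(b)}$, $u_c\in V_{j(c)}$ lie in $V(F')$ and form an induced copy of $F[A]$ (a triangle), which is exactly what $(P,\mathcal{A})$-goodness requires.

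With this in hand, Lemma~\ref{lem:RS_cycle_general} produces, for every sufficiently small $\varepsilon$ and every $n\ge n_0(\varepsilon)$, an $n$-vertex ordered graph $G$ that contains $\varepsilon n^2$ pair-disjoint induced copies of $F$ and at most $\varepsilon^{\Omega(\log 1/\varepsilon)}n^{v(F)}$ induced copies of $F$ in total --- precisely the assertion of the lemma. I expect the one genuinely delicate point to be the choice of $\mathcal{A}$: a single distinguished triangle will not do, because in an induced copy $F'$ the three vertices realizing that triangle may land in classes $V_{j(a)},V_{j(b)},V_{j(c)}$ different from $V_a,V_b,V_c$. Letting $\mathcal{A}$ range over all triangles of $F$ absorbs this ambiguity; the price is that one must invoke the $\mathcal{A}$-version (Lemma~\ref{lem:RS_cycle_general}) rather than its single-set corollary (Lemma~\ref{lem:RS_cycle}), and check that one fixed bijection $\sigma$ --- here the identity --- simultaneously handles every member of $\mathcal{A}$, which is immediate since for $A=\{p<q<r\}$ the cyclic traversal $p,q,r$ is already increasing.
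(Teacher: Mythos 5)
Your proposal is correct and matches the paper's proof essentially verbatim: the same pattern $P$ (gray on edges of $F$, white on non-edges), the same family $\mathcal{A}$ of all triangles of $F$, the same verification of $(P,\mathcal{A})$-goodness, and the same observation that any fixed $\sigma$ (the paper takes an arbitrary bijection, you take the identity) works because a triangle can always be traversed in increasing order. Your closing remark about why a single distinguished triangle does not suffice is exactly the point the paper's choice of $\mathcal{A}$ is designed to handle.
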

\begin{proof}
    Take $P$ to be the pattern on $V(F)$ in which all non-edges of $F$ are white and all edges of $F$ are gray. Take $\mathcal{A}$ to be the set of all (vertex sets of) triangles in $F$.
    Observe that $F$ is $(P,\mathcal{A})$-good. Indeed, let $G$ be an ordered graph with $P$-partition $(V_1,\dots,V_k)$ and let $F'$ be a copy of $F$ in $G$. Then $F'$ has a triangle, say on vertices $v_i \in V_i, v_j \in V_j, v_{\ell} \in V_{\ell}$. Then $\{i,j\},\{i,\ell\},\{j,\ell\}$ must be gray edges in $P$ and hence must be edges in $F$. So $i,j,\ell$ is a triangle in $F$, meaning that $v_i,v_j,v_{\ell}$ form a copy of $F[A]$ for $A = \{i,j,\ell\} \in \mathcal{A}$.
    Now take $\sigma : [k] \rightarrow [k]$ to be an arbitrary bijection. For a triangle it is always possible to choose a starting point and an orientation such that the triangle (as a cycle) is increasing with respect to $\sigma$.
    So the conditions of Lemma \ref{lem:RS_cycle_general} are satisfied, and the assertion follows from Lemma \ref{lem:RS_cycle_general}. 
\end{proof}

\noindent
Lemma \ref{lem:RS_cycle} easily implies the following:

\begin{lemma}\label{lem:RS_core}
	Let $K$ be an ordered core which has a cycle. Then for every small enough $\varepsilon$ and large enough $n$, there is an $n$-vertex ordered graph $G$ with the following properties:
	\begin{enumerate}
		\item $G$ is homomorphic to $K$.
		\item $G$ contains $\varepsilon n^2$ pair-disjoint induced copies of $K$.
		\item $G$ has at most $\varepsilon^{\Omega(\log 1/\varepsilon)}n^{v(K)}$ (not necessarily induced) copies of $K$. 
	\end{enumerate}
\end{lemma}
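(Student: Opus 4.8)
The plan is to apply Lemma~\ref{lem:RS_cycle} to a carefully chosen pattern $P$ for which $K$ is a $(P,A)$-good completion, and then to strengthen its conclusion from induced copies of $K$ to arbitrary copies of $K$, exploiting that $K$ is a core. Concretely, I would take $P$ to be the pattern on $V(K)$ in which every edge of $K$ is colored gray and every non-edge of $K$ is colored white (so there are no black edges). Then $K$ is a completion of $P$, obtained by recoloring every gray edge black; and, setting $A := V(P) = V(K)$, the subgraph $P[A] = P$ contains a cycle of gray edges, since $K$ contains a cycle by hypothesis. The crucial observation is that for any ordered graph $G$ equipped with a $P$-partition $(V_1,\dots,V_k)$, the map $\pi : V(G) \to V(K)$ sending every vertex of $V_i$ to $i$ is an ordered-graph homomorphism $G \to K$: it is order-preserving because $V_1 < \dots < V_k$, and if $\{u,v\} \in E(G)$ with $u \in V_i$, $v \in V_j$, then $i \neq j$ (each $V_i$ is independent) and $\{i,j\}$ cannot be white (white pairs span empty bipartite graphs), hence $\{i,j\}$ is gray, i.e.\ an edge of $K$.

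Next I would verify that $K$ is $(P,A)$-good. Let $G$ have a $P$-partition $(V_1,\dots,V_k)$ and let $F'$ be an induced copy of $K$ in $G$, realized by an order-preserving embedding $\psi : V(K) \to V(F')$. Then $\pi \circ \psi$ is a composition of ordered-graph homomorphisms, hence a homomorphism from $K$ to itself, hence the identity, because $K$ is a core. In particular $\psi(i) \in V_i$ for every $i$, so $v_i := \psi(i)$ lies in $V_i \cap V(F')$, and the tuple $(v_i : i \in A)$ is precisely the induced copy $F'$ of $K = K[A]$. Thus $K$ is a $(P,A)$-good completion of $P$ and $P[A]$ contains a gray cycle, so Lemma~\ref{lem:RS_cycle} applies (the required bijection $\sigma$ is automatically available in the single-set case, as in the remark preceding that lemma) and yields, for all sufficiently small $\varepsilon$ and all sufficiently large $n$, an $n$-vertex ordered graph $G$ with pattern $P$ that contains $\varepsilon n^2$ pair-disjoint induced copies of $K$ and at most $\varepsilon^{\Omega(\log 1/\varepsilon)} n^{v(K)}$ induced copies of $K$.

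Finally I would translate this into the three desired conclusions. Property~2 is immediate. For property~1, the map $\pi$ above is an ordered-graph homomorphism from $G$ to $K$, so $G$ is homomorphic to $K$. For property~3, the key point is that in this $G$ every copy of $K$ is automatically induced, so that the bound on the number of induced copies is in fact a bound on all copies. Indeed, let $\psi : V(K) \to V(G)$ be order-preserving and send edges of $K$ to edges of $G$. As before, $\pi \circ \psi$ is a homomorphism from $K$ to itself, hence the identity; in particular $\psi$ is injective. If there were $\{a,b\} \notin E(K)$ with $\{\psi(a),\psi(b)\} \in E(G)$, then applying $\pi$ would give $\{a,b\} = \{\pi\psi(a),\pi\psi(b)\} \in E(K)$, a contradiction; hence $\psi$ sends non-edges to non-edges and realizes an induced copy. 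Consequently the number of (not necessarily induced) copies of $K$ in $G$ equals the number of induced ones, which gives property~3.

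I do not anticipate a serious obstacle. The only points requiring care are checking that $\pi$ genuinely is a homomorphism $G \to K$ — which uses both that the parts $V_i$ are independent sets and that the non-edges of $K$ are white — and then invoking the coreness of $K$ twice: once to see that an induced copy of $K$ in $G$ respects the $P$-partition (so that $K$ is $(P,A)$-good), and once to see that an arbitrary copy of $K$ in $G$ is in fact induced.
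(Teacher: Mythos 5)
Your proposal is correct and follows essentially the same route as the paper: the same all-gray/white pattern $P$ on $V(K)$, the same use of coreness to establish $(P,V(P))$-goodness and to upgrade the count of induced copies to a count of all copies, and the same invocation of Lemma~\ref{lem:RS_cycle}. You simply spell out the details (the homomorphism $\pi$, the composition argument) that the paper leaves implicit.
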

\begin{proof}
	We reduce to Lemma \ref{lem:RS_cycle}. 
	Let $P$ be the pattern on $[k] = V(K)$ in which $\{i,j\}$ is gray if $\{i,j\} \in E(K)$ and white if $\{i,j\} \notin E(K)$. Observe that an ordered graph $G$ has pattern $P$ if and only if $G$ is homomorphic to $K$. The fact that $K$ is a core implies that $K$ is $(P,V(P))$-good. 
    Apply Lemma \ref{lem:RS_cycle} to get an ordered graph $G$ satisfying items 1-3 in Lemma \ref{lem:RS_cycle_general}. Then $G$ is homomorphic to $K$ because $G$ has pattern $P$. Observe that every copy of $K$ in $G$ is induced because $K$ is a core and $G$ is homomorphic to $K$. 
    Lemma \ref{lem:RS_core} follows. 
\end{proof}
\noindent
Using Lemma \ref{lem:RS_core}, we can prove Theorem \ref{thm:noninduced}.
\begin{proof}[Proof of Theorem \ref{thm:noninduced}]
    Let $K$ be the core of $F$, and suppose that $V(K) = [k]$.
	Apply Lemma \ref{lem:RS_core} with parameters $v(F)^2 \cdot \varepsilon$ (in place of $\varepsilon$) and $\frac{n}{v(F)}$ (in place of $n$) to obtain an ordered graph $G'$ on $\frac{n}{v(F)}$ vertices with the properties stated in the lemma. 
	Since $G'$ is homomorphic to $K$, we have $V(G') = V_1 \cup \dots \cup V_k$ for independent sets $V_1 < \dots < V_k$.  
	Let $G$ be the $v(F)$-blowup of $G'$, and denote by $W_i$ the blowup of $V_i$ 
	(for $i = 1,\dots,k$).
	By item 2 in Lemma \ref{lem:RS_core}, $G'$ contains a collection $K_1,\dots,K_M$ of $M \geq v(F)^2 \cdot \varepsilon \cdot (\frac{n}{v(F)})^2 = \varepsilon n^2$ pair-disjoint copies of $K$. For each $1 \leq i \leq M$, let $B_i$ be the $v(F)$-blowup of $K_i$, and let $E_i$ be the set of edges of $B_i$ which go between the sets $W_1,\dots,W_k$. Since $K_1,\dots,K_M$ are pair-disjoint, the sets $E_1,\dots,E_M$ are disjoint. Observe that each $B_i$ contains a copy of $F$, and that in order to destroy this copy one must delete some edge of $E_i$. Since $E_1,\dots,E_M$ are disjoint, one must delete at least $M \geq \varepsilon n^2$ edges from $G$ to make it $F$-free, as required. 
	
	To complete the proof, let us bound the number of copies of $F$ in $G$.
	Since $K$ is a subgraph of $F$, every copy of $F$ must contain a copy of $K$. Each copy of $K$ can be completed to a copy of $F$ in at most $n^{v(F)-k}$ ways. 
	Since $K$ is a core and $G'$ (and hence also $G$) is homomorphic to $K$, every copy of $K$ in $G$ corresponds to a copy of $K$ in $G'$. On the other hand, each copy of $K$ in $G'$ gives rise to $v(F)^k = O(1)$ copies of $K$ in $G$. By item 3 in Lemma \ref{lem:RS_core}, $G'$ has at most $\varepsilon^{\Omega(\log 1/\varepsilon)} \cdot n^k$ copies of $K$. 
	Hence, the number of copies of $F$ in $G$ is at most $O(1) \cdot \varepsilon^{\Omega(\log 1/\varepsilon)} \cdot n^k \cdot n^{v(F) - k} = \varepsilon^{\Omega(\log 1/\varepsilon)} n^{v(F)}$, as required.  
\end{proof}

\noindent
Next, we prove the ``only if'' part of Theorem \ref{thm:induced}, which we restate as follows:

\begin{theorem}\label{thm:construction_induced}
	Let $F$ be an ordered graph, $v(F)\geq 3$,
	$F \notin \left\{D,D^{\leftarrow},\overline{D},\overline{D^{\leftarrow}}\right\}$. Then for every sufficiently small $\varepsilon>0$ and $n>n_0(\varepsilon)$, there is an $n$-vertex ordered graph $G$ which is $\varepsilon$-far from being induced $F$-free but contains at most $\varepsilon^{\Omega(\log 1/\varepsilon)} n^{v(F)}$ induced copies of $F$.
\end{theorem}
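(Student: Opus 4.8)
The plan is to do a case analysis on the ordered graph $F$ with $v(F)\geq 3$, reducing every case to one of the two ``engines'' already built: Lemma~\ref{lem:RS_triangles} (for $F$ containing a triangle) and Lemma~\ref{lem:RS_cycle_general} (for triangle-free $F$, via a carefully chosen pattern $P$, family $\mathcal{A}$ and bijection $\sigma$). By symmetry under complementation and order reversal (which preserves all the relevant quantities), we may assume $F$ has at least as many edges as $\overline{F}$ and may replace $F$ by $F^{\leftarrow}$ when convenient. If $F$ contains a triangle, Lemma~\ref{lem:RS_triangles} immediately produces the desired $G$, so from now on assume $F$ is triangle-free. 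Since $v(F)\geq 3$ and $F\notin\{D,D^{\leftarrow},\overline D,\overline{D^{\leftarrow}}\}$, and since $P_3$-type graphs on three vertices are exactly $D$, $D^{\leftarrow}$ (the remaining three-vertex ordered graphs are complete, empty, or a single edge, all excluded by the assumptions after using complementation), we may assume $v(F)\geq 4$.

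The heart of the argument is the triangle-free case. Here I would take $P$ to be the pattern on $V(F)=[k]$ in which every non-edge of $F$ is colored white and every edge of $F$ is colored gray; then an ordered graph $G$ has pattern $P$ iff it admits an ordered partition into independent sets $V_1<\dots<V_k$ with the prescribed non-edges between the blocks, and $F$ itself is a completion of $P$. The task is to exhibit a family $\mathcal{A}$ of subsets of $[k]$ and a bijection $\sigma$ satisfying the hypothesis of Lemma~\ref{lem:RS_cycle_general}, namely: (i) $F$ is $(P,\mathcal{A})$-good, i.e.\ every induced copy $F'$ of $F$ in \emph{any} $G$ with pattern $P$ must, on some $A\in\mathcal{A}$, already realize an induced copy of $F[A]$ using one vertex from each relevant block; and (ii) for each $A\in\mathcal{A}$ the induced subgraph $P[A]$ (equivalently $F[A]$, restricted to edges) contains a cycle $i_1,\dots,i_t,i_1$ with $\sigma(i_1)<\dots<\sigma(i_t)$. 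The natural choice is to let $\mathcal{A}$ consist of the vertex sets of all cycles of $F$ (note $F$ is triangle-free but, since $F\notin\{D,\dots\}$ and is not a forest would be the easy sub-case, $F$ may or may not contain a cycle). If $F$ contains a cycle, goodness holds for the trivial reason that any copy of $F$ contains a copy of that cycle, whose vertices land in distinct blocks and must be connected by gray edges of $P$, hence form an induced copy of $F[A]$ with $A$ a cycle; and a single cycle can always be oriented and rooted so as to be increasing under a suitable $\sigma$, so Lemma~\ref{lem:RS_cycle} applies directly.

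The genuinely delicate case — and the one I expect to be the main obstacle — is when $F$ is a \emph{forest} (equivalently, after complementation, when $\overline F$ is the complement of a forest) on $k\geq 4$ vertices, $F\notin\{D,D^{\leftarrow},\overline D,\overline{D^{\leftarrow}}\}$. Now $F$ contains no cycle at all, so the cycle must come from the \emph{pattern} rather than from $F$: one must choose $\mathcal{A}$ so that $P[A]$ has a gray cycle even though $F[A]$ is a forest. Concretely, for a set $A$ on which $F[A]$ is a path or contains two independent edges, the pattern $P[A]$ (all $F$-edges gray, all $F$-non-edges white) is \emph{not} complete-gray, so one has to verify that $P[A]$ nonetheless has a cycle of gray edges — this forces $A$ to induce in $F$ a subgraph whose edge set is not a forest-complement, which is why small forests like $D$ are genuinely exceptional. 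I would therefore isolate, for each forest $F$ with $v(F)\geq 4$, a small vertex subset $A$ (say $|A|\le 4$) such that $F[A]$ is one of a short list of configurations — e.g.\ a path on $3$ or $4$ vertices, or a matching of size $2$ — for which one checks by hand that (a) $P[A]$ has a gray cycle and (b) $F$ is $(P,A)$-good, the latter being the content-heavy step: one must argue that in any $G$ with the $P$-partition, a copy of the \emph{whole} forest $F$ cannot avoid realizing the induced subgraph $F[A]$ honestly, using the linear order to pin each vertex of the copy into its block. This last verification is where the ordered structure is essential and where the argument of \cite{AS_induced} must be adapted rather than quoted; I would organize it as a handful of lemmas, one per shape of $A$, invoking Lemma~\ref{lem:RS_cycle} at the end of each.
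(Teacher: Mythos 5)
There is a genuine gap at the center of your plan. You dispose of the case where the triangle-free graph $F$ contains a cycle by claiming that, for the pattern $P$ with all edges of $F$ gray and all non-edges white, taking $\mathcal{A}$ to be the vertex sets of all cycles of $F$ makes $F$ $(P,\mathcal{A})$-good ``for the trivial reason that any copy of $F$ contains a copy of that cycle, whose vertices land in distinct blocks.'' That reasoning is only valid for triangles: the vertices of a triangle are pairwise adjacent, so no two can share an (independent) block. For a cycle of length at least $4$, two \emph{non-adjacent} cycle vertices can lie in the same block, and then the copy does not supply one vertex in each $V_i$, $i\in A$, as the definition of $(P,\mathcal{A})$-goodness demands. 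Concretely, take $F=C_4^{(2)}$, the ordered $4$-cycle $1,3,2,4,1$ (edges $\{1,3\},\{2,3\},\{2,4\},\{1,4\}$, non-edges $\{1,2\},\{3,4\}$). With your pattern, a graph with $P$-partition $(V_1,\dots,V_4)$ can contain an induced copy $a_1<a_2<a_3<a_4$ of $C_4^{(2)}$ with $a_1,a_2\in V_1$, $a_3\in V_3$, $a_4\in V_4$: every pair is either on a gray pair of blocks or is a required non-edge on a white pair, so nothing in the pattern forbids it, yet $V_2\cap V(F')=\emptyset$. So goodness fails and Lemma~\ref{lem:RS_cycle} does not apply as you invoke it. This is exactly why the paper treats $C_4^{(1)}$ via the core machinery (it is the only ordered $4$-cycle that is a core --- $C_4^{(2)}$ is homomorphic to $D$), handles $C_4^{(2)}$ by passing to its complement with a pattern containing \emph{black} edges, and handles $C_4^{(3)}$ with $\mathcal{A}=\{\{1,2,4\},\{1,3,4\}\}$ and a nontrivial goodness verification. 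The $5$-cycle case likewise goes through the core lemma, not your argument.

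Two further points. First, you never bound the case analysis: after discarding graphs with a triangle or an independent set of size $3$ (the latter by complementation), Ramsey forces $v(F)\le 5$, and the surviving graphs are exactly $P_3^{\text{mon}}$, the three ordered $4$-cycles, the ordered $4$-paths, their complements, and the ordered $5$-cycles; without this reduction your ``one lemma per forest on $\ge 4$ vertices'' is not a finite plan. Second, for the cases you do identify as hard (the ordered paths and matchings on $4$ vertices), you only announce that bespoke patterns and goodness verifications are needed; these verifications are the actual content of the paper's proof and are entirely absent from the proposal. As it stands, the proposal resolves correctly only the triangle case and $P_k^{\text{mon}}$, and is incorrect or incomplete on every remaining case.
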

\begin{proof}
	If $F$ contains a triangle then the assertion follows from Lemma \ref{lem:RS_triangles}. By symmetry with respect to taking graph complements, the same is true if $F$ contains an independent set of size 3. This in particular proves the theorem for $F$ on at least $6$ vertices, since every such $F$ contains a triangle or an independent set of size 3. We will assume that $F$ contains neither of these.
	
	Denote by $P_k^{\text{mon}}$ the monotone path with $k$ vertices, that is, the ordered path with vertex set $[k]$ and edges $\{i,i+1\}$ for $i=1,\dots,k-1$.
	It is easy to see that $P_k^{\text{mon}}$ is $(P,V(P))$-good for the pattern $P$ on $[k]$ in which $\{1,2\},\{2,3\},\dots,\{k-1,k\},\{k,1\}$ are gray and all other edges are white. So if $F = P_k^{\text{mon}}$ ($k \geq 3$) then the assertion of the theorem follows from Lemma \ref{lem:RS_cycle}.  
	
	If $v(F) = 3$, then $F = P_{3}^{\text{mon}}$ or $F = \overline{P_{3}^{\text{mon}}}$, and we already handled these cases. 
	If $v(F) = 5$ then $F$ must be a $5$-cycle, because every other 5-vertex graph contains a triangle or an independent set of size 3.
	Every ordered $5$-cycle is a core (because the homomorphic image of an odd cycle must itself contain an odd cycle). So for these $F$, the assertion follows from Lemma \ref{lem:RS_core}.
	
	It remains to handle the case $v(F) = 4$. The only 4-vertex (unordered) graphs which have no triangle and no independent set of size $3$ are the 4-cycle, the complement of the 4-cycle, and the path with four vertices.

	We first consider the 4-cycle. There are 3 non-isomorphic ordered 4-cycles. Assuming the vertices are $1,2,3,4$, these 4-cycles are:
	$C_4^{(1)} = 1,2,3,4,1$; 
	$C_4^{(2)} = 1,3,2,4,1$;
	and
	$C_4^{(3)} = 1,2,4,3,1$. See Figure \ref{fig:4cycles} for an illustration. 
	
	\begin{itemize}
	    \item[$C_4^{(1)}$:] It is easy to see that $C_4^{(1)}$ is a core, so this case follows from Lemma \ref{lem:RS_core}. 
	    \item[$C_4^{(2)}$:]  Let us consider the complement $\overline{C_4^{(2)}}$, which is the ordered graph with vertices $1,2,3,4$ and edges $\{1,2\},\{3,4\}$. We show that this graph is $P$-good for a suitable pattern $P$. Let $P$ be the pattern on $[4]$ in which $\{1,2\},\{3,4\}$ are black and all other edges are gray. Then $\overline{C_4^{(2)}}$ is $(P,V(P))$-good. Indeed, let $G$ be a graph with pattern $P$, and let $(V_1,V_2,V_3,V_4)$ be a $P$-partition of $G$. Note that the bipartite graphs $(V_1,V_2)$ and $(V_3,V_4)$ are complete. Let $C$ be an induced copy of $\overline{C_4^{(2)}}$ in $G$, and let $a_i$ be the vertex of $C$ playing the role of $i$ (for $i = 1,\dots,4$). It is enough to show that $|C \cap V_i| = 1$ for all $1 \leq i \leq 4$, as this would imply that $a_i \in V_i$. Suppose by contradiction that $|C \cap V_i| \geq 2$ for some $i$. Then $|C \cap V_i| = 2$ because $V_i$ is an independent set and $C$ is does not have an independent set of size 3. The two vertices in $C \cap V_i$ must play the role of some non-edge $e$ of $C$. If $e$ is  $\{a_1,a_3\},\{a_2,a_4\}$ or $\{a_1,a_4\}$, then $|C \cap V_i| \geq 3$, because for each of those edges, there is another vertex of $C$ between the endpoints of the edge. Hence $e = \{a_2,a_3\}$. Since $a_1$ comes before $V_i$ and $a_4$ after $V_i$, it must be that $i = 2$ or $i = 3$; without loss of generality, $i = 2$. Then $a_1 \in V_1$. But then $a_1$ is adjacent to $a_3 \in V_2$ because the bipartite graph $(V_1,V_2)$ is complete, a contradiction. 
	    
	    The assertion of Theorem \ref{thm:construction_induced} for $C_4^{(2)}$ now follows by applying Lemma \ref{lem:RS_cycle} to $\overline{C_4^{(2)}}$ and taking complements.
	    
	    \item[$C_4^{(3)}$:] 	Here we will use Lemma \ref{lem:RS_cycle_general}.
	Let $P$ be the pattern on $[4]$ with white edge $\{2,3\}$ and all other edges gray. Let $\mathcal{A} = \{\{1,2,4\},\{1,3,4\}\}$. 
	We claim that $C_4^{(3)}$ is $(P,\mathcal{A})$-good. Indeed, let $G$ be a graph with pattern $P$, and let $(V_1,V_2,V_3,V_4)$ be a $P$-partition of $G$. Note that the bipartite graph $(V_2,V_3)$ is empty. Let $C$ be an induced copy of $C_4^{(3)}$ in $G$, and let $a_i$ be the vertex of $C$ playing the role of $i$ (for $i = 1,\dots,4$). We need to show that $a_1 \in V_1$, $a_4 \in V_4$, and $a_2 \in V_2$ or $a_3 \in V_3$. Observe that $a_2,a_3 \in V_2 \cup V_3$, because $a_1,a_4$ are adjacent to both $a_2$ and $a_3$, $a_1$ comes before $a_2,a_3$, and $a_4$ comes after $a_2,a_3$. If $a_2 \in V_2$ and $a_3 \in V_3$ then we must have $a_1 \in V_1$ and $a_4 \in V_4$, so we are done. Suppose then that $a_2,a_3 \in V_2$ or $a_2,a_3 \in V_3$; without loss of generality, we may assume that $a_2,a_3 \in V_2$. This implies that $a_1 \in V_1$. Also, since the bipartite graph $(V_2,V_3)$ is empty, we must have $a_4 \in V_4$. Therefore, $C_4^{(3)}$ is $(P,\mathcal{A})$-good. So the assertion of Theorem \ref{thm:construction_induced} for $C_4^{(3)}$ follows from Lemma \ref{lem:RS_cycle_general} (with $\sigma$ being the identity map). 
	\end{itemize}
	
	\begin{figure}\label{fig:4cycles}
	\centering
	\begin{tikzpicture}
		\foreach \i in {1,2,3,4}
		{
			\draw (\i,0) node[fill=black,circle,minimum size=2pt,inner sep=0pt] {};
			\draw (\i, -0.25) node {$\i$};
		
			\draw (1,0) -- (2,0) -- (3,0) -- (4,0);
			\draw[rounded corners = 12] (4,0) -- (3,0.4) -- (2,0.4) -- (1,0);
		}
		\draw (2.5,-0.75) node {$C_4^{(1)}$};
	\end{tikzpicture}
	\hspace{0.2cm}
	\begin{tikzpicture}
	\foreach \i in {1,2,3,4}
	{
		\draw (\i,0) node[fill=black,circle,minimum size=2pt,inner sep=0pt] {};
		\draw (\i, -0.25) node {$\i$};
		
		\draw (2,0) -- (3,0);
		\draw[rounded corners = 12] (1,0) -- (2,0.3) -- (3,0);
		\draw[rounded corners = 12] (2,0) -- (3,0.3) -- (4,0);
		\draw[rounded corners = 12] (4,0) -- (3,0.4) -- (2,0.4) -- (1,0);
	}
	\draw (2.5,-0.75) node {$C_4^{(2)}$};
	\end{tikzpicture}
	\hspace{0.2cm}
	\begin{tikzpicture}
	\foreach \i in {1,2,3,4}
	{
		\draw (\i,0) node[fill=black,circle,minimum size=2pt,inner sep=0pt] {};
		\draw (\i, -0.25) node {$\i$};
		
		\draw (1,0) -- (2,0);
		\draw (3,0) -- (4,0);
		\draw[rounded corners = 12] (1,0) -- (2,0.4) -- (3,0);
		\draw[rounded corners = 12] (2,0) -- (3,0.4) -- (4,0);
	}
	\draw (2.5,-0.75) node {$C_4^{(3)}$};
	\end{tikzpicture}
\caption{The ordered $4$-cycles}
\end{figure}
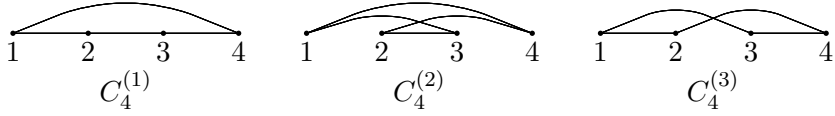

	It remains to consider ordered paths with four vertices. Up to complementation and order reversal, there are only four possible such paths: $P_4^{\text{mon}}$; 
	$P_4^{(1)} = 2,1,4,3$; 
	$P_4^{(2)} = 2,1,3,4$; and
	$P_4^{(3)} = 3,2,1,4$.
	See Figure \ref{fig:paths} for an illustration. We already established the case $P_4^{\text{mon}}$. 
	For the other three cases, we again use Lemma \ref{lem:RS_cycle}. 
	
	\begin{itemize}
	    \item[$P_4^{(1)}$:] 	Let $P$ be the pattern on $[4]$ in which $\{1,3\}$ and $\{2,4\}$ are white, and all other edges are gray. Then $P_4^{(1)}$ is $(P,V(P))$-good. Indeed, let $G$ be a graph with pattern $P$, and let $(V_1,\dots,V_4)$ be a $P$-partition of $G$. Then the bipartite graphs $(V_1,V_3)$ and $(V_2,V_4)$ are empty.
	Let $X$ be an induced copy of $P_4^{(1)}$ in $G$, and let $a_i$ be the vertex of $X$ playing the role of $i$ (for $i = 1,\dots,4$). 
	It is enough to show that 
	$|X \cap V_i| = 1$ for all $1 \leq i \leq 4$, as this will imply that $a_i \in V_i$, showing that $P_4^{(1)}$ is $(P,V(P))$-good.
	Suppose by contradiction that 
	$|X \cap V_i| \geq 2$ for some $i$. Then $|X \cap V_i| = 2$ because $V_i$ is an independent set and $X$ has no independent set of size $3$. The two vertices in 
	$X \cap V_i$ must play the role of some non-edge $e$ of $X$. If 
	$e = \{a_1,a_3\},\{a_2,a_4\}$ then 
	$|X \cap V_i| \geq 3$, because for each of those $e$, there is another vertex of $X$ between the endpoints of $e$. Hence 
	$e = \{a_2,a_3\}$. Since $a_1$ comes before $V_i$ and $a_4$ after $V_i$, it must be that $i = 2$ or $i = 3$; without loss of generality, $i = 2$. Then $a_1 \in V_1$. It follows that $a_4 \in V_4$, because $a_4$ is adjacent to $a_1$ and there are no edges between $V_1$ and $V_3$. But now, $a_4$ is not adjacent to $a_3 \in V_2$, as there are no edges between $V_2$ and $V_4$. This is a contradiction.
	
	Lemma \ref{lem:RS_cycle} now confirms the case of $P_4^{(1)}$. 
	
	\item[$P_4^{(2)}$:] Take $P$ to be the pattern on $[4]$ with edges $\{2,3\},\{2,4\}$ white and all other edges gray, and take $A = \{1,3,4\}$. Then $P_4^{(2)}$ is $(P,A)$-good. Indeed, let $G$ be an ordered graph with pattern $P$, and let $(V_1,V_2,V_3,V_4)$ be a $P$-partition of $G$. Note that the bipartite graphs $(V_2,V_3)$ and $(V_2,V_4)$ are empty. Let $X$ be an induced copy of $P_4^{(2)}$ in $G$, and let $a_i$ be the vertex of $X$ playing the role of $i$ (for $i = 1,\dots,4$). We need to show that $a_i \in V_i$ for $i = 1,3,4$. 
	We first claim that $a_3 \in V_3$. Since $a_1,a_4$ are adjacent to $a_3$, $a_1$ comes before $a_3$, and $a_4$ comes after $a_3$, it must be that $a_3 \in V_2 \cup V_3$. If $a_3 \in V_2$ then $a_4$ cannot be adjacent to $a_3$ because the bipartite graphs $(V_2,V_3)$ and $(V_2,V_4)$ are empty, a contradiction. So $a_3 \in V_3$. It follows that $a_4 \in V_4$. Similarly, as the bipartite graph $(V_2,V_3)$ is empty, it must be that $a_1 \in V_1$, as required.
	
	We apply Lemma \ref{lem:RS_cycle} to conclude.
	
	\item [$P_4^{(3)}$:] Take $P$ to be the pattern on $[4]$ with edges $\{2,4\},\{3,4\}$ white and all other edges gray, and take $A = \{1,2,3\}$. Similarly as in the previous case, one can check that $P_4^{(3)}$ is $(P,A)$-good. Therefore, we can again apply Lemma \ref{lem:RS_cycle} to finish the proof.
	\end{itemize}

\end{proof}

\begin{figure}\label{fig:paths}
	\centering
	\begin{tikzpicture}
	\foreach \i in {1,2,3,4}
	{
		\draw (\i,0) node[fill=black,circle,minimum size=2pt,inner sep=0pt] {};
		\draw (\i, -0.25) node {$\i$};
		
		\draw (1,0) -- (2,0);
		\draw (3,0) -- (4,0);
		\draw[rounded corners = 12] (4,0) -- (3,0.4) -- (2,0.4) -- (1,0);
	}
	\draw (2.5,-0.75) node {$P_4^{(1)}$};
	\end{tikzpicture}
	\hspace{0.2cm}
	\begin{tikzpicture}
	\foreach \i in {1,2,3,4}
	{
		\draw (\i,0) node[fill=black,circle,minimum size=2pt,inner sep=0pt] {};
		\draw (\i, -0.25) node {$\i$};
		
		\draw (1,0) -- (2,0);
		\draw (3,0) -- (4,0);
		\draw[rounded corners = 12] (1,0) -- (2,0.4) -- (3,0);
	}
	\draw (2.5,-0.75) node {$P_4^{(2)}$};
	\end{tikzpicture}
	\hspace{0.2cm}
	\begin{tikzpicture}
	\foreach \i in {1,2,3,4}
	{
		\draw (\i,0) node[fill=black,circle,minimum size=2pt,inner sep=0pt] {};
		\draw (\i, -0.25) node {$\i$};
		
		\draw (1,0) -- (2,0);
		\draw (2,0) -- (3,0);
		\draw[rounded corners = 12] (4,0) -- (3,0.4) -- (2,0.4) -- (1,0);
	}
	\draw (2.5,-0.75) node {$P_4^{(3)}$};
	\end{tikzpicture}
\caption{The ordered paths $P_4^{(i)}$}
\end{figure}
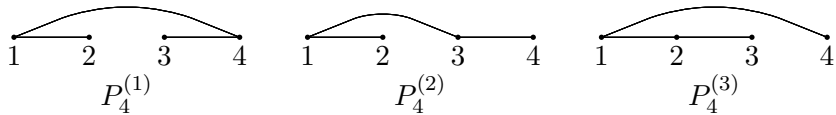

 	\paragraph{Acknowledgements} The authors are grateful to Omri Ben-Eliezer for telling them about the paper \cite{RR}, and the anonymous referee for their useful comments and suggestions.
 	
\bibliographystyle{alphaurl}
\bibliography{biblio.bib}

\begin{thebibliography}{KTTW19}

\bibitem[ABE20]{AB20}
Noga Alon and Omri Ben-Eliezer.
\newblock Efficient removal lemmas for matrices.
\newblock {\em Order}, 37(1):83--101, 2020.

\bibitem[ABEF17]{ABF17}
Noga Alon, Omri Ben-Eliezer, and Eldar Fischer.
\newblock Testing hereditary properties of ordered graphs and matrices.
\newblock In {\em 2017 IEEE 58th Annual Symposium on Foundations of Computer
  Science (FOCS)}, pages 848--858. IEEE, 2017.

\bibitem[AF15]{AF}
Noga Alon and Jacob Fox.
\newblock Easily testable graph properties.
\newblock {\em Combinatorics, Probability and Computing}, 24(4):646--657, 2015.

\bibitem[AFKS00]{AFKS}
Noga Alon, Eldar Fischer, Michael Krivelevich, and Mario Szegedy.
\newblock Efficient testing of large graphs.
\newblock {\em Combinatorica}, 20(4):451--476, 2000.

\bibitem[Alo02]{Alon}
Noga Alon.
\newblock Testing subgraphs in large graphs.
\newblock {\em Random Structures \& Algorithms}, 21(3-4):359--370, 2002.

\bibitem[AS04]{AS_directed}
Noga Alon and Asaf Shapira.
\newblock Testing subgraphs in directed graphs.
\newblock {\em Journal of Computer and System Sciences}, 69(3):354--382, 2004.

\bibitem[AS05]{AS_hypergraphs}
Noga Alon and Asaf Shapira.
\newblock Linear equations, arithmetic progressions and hypergraph property
  testing.
\newblock {\em Theory of Computing}, 1(1):177--216, 2005.

\bibitem[AS06]{AS_induced}
Noga Alon and Asaf Shapira.
\newblock A characterization of easily testable induced subgraphs.
\newblock {\em Combinatorics, Probability and Computing}, 15(6):791--805, 2006.

\bibitem[AS08]{AS_hereditary}
Noga Alon and Asaf Shapira.
\newblock A characterization of the (natural) graph properties testable with
  one-sided error.
\newblock {\em SIAM Journal on Computing}, 37(6):1703--1727, 2008.

\bibitem[Beh46]{Behrend}
Felix~A. Behrend.
\newblock On sets of integers which contain no three terms in arithmetic
  progression.
\newblock {\em Proc. Nat. Acad. Sci. USA}, 32:331--332, 1946.

\bibitem[CF13]{CF}
David Conlon and Jacob Fox.
\newblock Graph removal lemmas.
\newblock {\em Surveys in combinatorics}, 409:1--49, 2013.

\bibitem[dV19]{J}
R{\'e}mi de~Joannis de~Verclos.
\newblock Chordal graphs are easily testable.
\newblock {\em arXiv preprint arXiv:1902.06135}, 2019.

\bibitem[FG65]{FG}
Delbert Fulkerson and Oliver Gross.
\newblock Incidence matrices and interval graphs.
\newblock {\em Pacific journal of mathematics}, 15(3):835--855, 1965.

\bibitem[FH92]{FH92}
Zolt{\'a}n F{\"u}redi and P{\'e}ter Hajnal.
\newblock Davenport-schinzel theory of matrices.
\newblock {\em Discrete Mathematics}, 103(3):233--251, 1992.

\bibitem[Fox11]{Fox}
Jacob Fox.
\newblock A new proof of the graph removal lemma.
\newblock {\em Annals of Mathematics}, pages 561--579, 2011.

\bibitem[Gol17]{Goldreich}
Oded Goldreich.
\newblock {\em Introduction to property testing}.
\newblock Cambridge University Press, 2017.

\bibitem[GS19]{GS_C4}
Lior Gishboliner and Asaf Shapira.
\newblock Efficient removal without efficient regularity.
\newblock {\em Combinatorica}, 39(3):639--658, 2019.

\bibitem[GS21]{GS_poly}
Lior Gishboliner and Asaf Shapira.
\newblock Removal lemmas with polynomial bounds.
\newblock {\em International Mathematics Research Notices},
  2021(19):14409--14444, 2021.

\bibitem[GT21]{GT}
Lior Gishboliner and Istv{\'a}n Tomon.
\newblock On 3-graphs with no four vertices spanning exactly two edges.
\newblock {\em Bulletin of the London Mathematical Society}, 2021.

\bibitem[KTTW19]{KTTW19}
D{\'a}niel Kor{\'a}ndi, G{\'a}bor Tardos, Istv{\'a}n Tomon, and Craig Weidert.
\newblock On the tur{\'a}n number of ordered forests.
\newblock {\em Journal of Combinatorial Theory, Series A}, 165:32--43, 2019.

\bibitem[RR21]{RR}
Dana Ron and Asaf Rosin.
\newblock Optimal distribution-free sample-based testing of
  subsequence-freeness.
\newblock In {\em Proceedings of the 2021 ACM-SIAM Symposium on Discrete
  Algorithms (SODA)}, pages 337--256. SIAM, 2021.

\bibitem[RS78]{RuzsaSz}
Imre~Z. Ruzsa and Endre Szemer{\'e}di.
\newblock Triple systems with no six points carrying three triangles.
\newblock {\em Combinatorics (Keszthely, 1976), Coll. Math. Soc. J. Bolyai},
  18:939--945, 1978.

\bibitem[Sze75]{Szemeredi}
Endre Szemer{\'e}di.
\newblock Regular partitions of graphs.
\newblock Technical report, Stanford Univ Calif Dept of Computer Science, 1975.

\end{thebibliography}
\end{document}